\newcommand\eA           {\EuScript{A}}
\newcommand\eB           {\EuScript{B}}
\newcommand\eC           {\EuScript{C}}
\newcommand\eD           {\EuScript{D}}
\newcommand\eF          {\EuScript{F}}
\newcommand\eM          {\EuScript{M}}
\newcommand\cB           {\mathcal{B}}
\newcommand\cM          {\mathcal{M}}
\newcommand{\tR}{{\tilde{R}}}
\newcommand{\tF}{{\tilde{F}}}
\newcommand{\tI}{{\tilde{I}}}
\newcommand{\tf}{{\tilde{f}}}
\newcommand{\fg}{\mathfrak{g}}
\newcommand{\fs}{\mathfrak{s}}
\newcommand{\ft}{\mathfrak{t}}
\newcommand{\fj}{\mathfrak{j}}
\newcommand{\BB}{{\eB}}
\renewcommand{\AA}{{\eA}}
\newcommand{\CC}{{\eC}}
\newcommand{\DD}{{\eD}}
\newcommand{\MM}{{\eM}}
\newcommand{\FF}{{\eF}}
\newcommand{\Mod}[1]{{\eM_{#1}}}
\newcommand{\Modrev}[1]{{\eM^{\rev}_{#1}}}
\newcommand{\BC}{\mathbb{C}}
\newcommand{\BZ}{\mathbb{Z}}
\newcommand{\BS}{\mathbb{S}}
\newcommand{\BT}{\mathbb{T}}
\newcommand{\bX}{\mathbf{X}}
\newcommand{\bY}{\mathbf{Y}}
\newcommand{\s}{\sigma}
\renewcommand{\b}{\beta}
\newcommand{\KB}{{K(\BB)}}
\newcommand{\KA}{{K(\CA)}}
\newcommand{\Ki}{{K(\BB_1)}}
\newcommand{\Kii}{{K(\BB_2)}}
\newcommand{\ld}{\lambda}
\newcommand{\e}{\varepsilon}
\newcommand{\vp}{\varphi}
\newcommand{\ev}{\operatorname{ev}}
\newcommand{\coev}{\operatorname{coev}}
\newcommand{\SL}{\operatorname{SL}}
\newcommand{\id}{\operatorname{id}}
\newcommand{\Hom}{\operatorname{Hom}}
\newcommand{\uH}{\underline{\operatorname{Hom}}}
\newcommand{\irr}{\operatorname{Irr}}
\newcommand{\FPdim}{\operatorname{FPdim}}
\newcommand{\rev}{\operatorname{rev}}
\newcommand{\tr}{\operatorname{tr}}
\newcommand{\Tr}{\operatorname{Tr}}
\newcommand{\Rep}{\operatorname{Rep}}
\renewcommand{\1}{{\mathds{1}}}
\renewcommand{\Vec}{\operatorname{Vec}}
\newtheorem{thm}{Theorem}[section]
\newtheorem{lem}[thm]{Lemma}
\newtheorem{prop}[thm]{Proposition}
\newtheorem{cor}[thm]{Corollary}
\newtheorem{rmk}[thm]{Remark}
\newtheorem{question}[thm]{Question}
\theoremstyle{definition}
\newtheorem{defn}[thm]{Definition}
\def\O2{{\Omega^n_2}}
\def\a{{\alpha}}
\def\g{{\gamma}}
\def\ta{{\tilde{\alpha}}}
\def\o{{\otimes}}
\def\oa{{\,\otimes\!_A\,}}
\def\ol{\overline}
\newcommand{\ul}[1]{{\underline{#1}}}
\def\SLZ{\SL_2(\BZ)}
\def\w{\omega}
\def\CAo{{\CC^0_{\!A}}}
\def\CA{{\CC_A}}
\def\CB{{\CC_B}}
\def\CBo{{\CC^0_{\!B}}}
\def\LCA{{\mathcal{L}(\CA)}}
\def\LaA{{\mathcal{L}_{\operatorname{alg}}(A)}}
\def\iC{\irr(\CC)}
\def\iCA{{\irr(\CC_A)}}
\def\iCBo{{\irr(\CBo)}}
\def\iA{{\irr(\AA)}}
\def\iB{{\irr(\BB)}}
\def\Aut{{\rm Aut}}
\def\Auta{{\operatorname{Aut}_{\operatorname{alg}}}}
\newcommand{\jacobi}[2]{{\genfrac(){}{0}{#1}{#2}}}
\newtheorem*{CPM}{Theorem I}
\newtheorem*{CPMII}{Theorem II}
\newtheorem*{CPMIII}{Theorem III}
\def\vec{{\operatorname{Vec}}}
\newcommand{\mtx}[1]{{\begin{bmatrix} #1 \end{bmatrix}}}
\newenvironment{enumeri}
{\begin{enumerate}[label=\rm{(\roman*)}, leftmargin=18pt, labelsep=3pt]}
{\end{enumerate}}
\title{Generalized Symmetries From Fusion Actions}
\author{Chongying Dong}
\address{Department of Mathematics, University of California, Santa Cruz, CA 95064}
\email{dong@ucsc.edu}
\author{Siu-Hung Ng}
\address{Department of Mathematics, Louisiana State University, Baton Rouge, LA 70817}
\email{rng@math.lsu.edu}
\author{Li Ren}
\address{Department of Mathematics, Sichuan University, Chengdu, 610064, China}
\email{RenL@scu.edu.cn}
\author{Feng Xu}
\address{Department of Mathematics, University of California, Riverside, CA, 92521}
\email{xufeng@math.ucr.edu }
\begin{document}

\begin{abstract} Let $A$ be a condensable algebra in a modular tensor category $\EuScript{C}$. We define an action of the fusion category $\EuScript{C}_A$  of $A$-modules in $\EuScript{C}$ on the morphism space $\mbox{Hom}_{\EuScript{C}}(x,A)$ for any $x$ in $\EuScript{C}$, whose characters are generalized Frobenius-Schur indicators. This fusion action can be considered on $A$, and we prove a categorical generalization of the Schur-Weyl duality for this action. For any fusion subcategory $\EuScript{B}$ of $\EuScript{C}_A$ containing all the local $A$-modules, we prove the invariant subobject $B=A^\EuScript{B}$ is a condensable subalgebra of $A$. The assignment of $\EuScript{B}$ to $A^\EuScript{B}$ defines a Galois correspondence between this kind of fusion subcategories of $\EuScript{C}_A$ and the condensable subalgebras of $A$. In the context of VOAs, we prove for any nice VOAs $U \subset A$, $U=A^{\EuScript{C}_A}$ where $\EuScript{C}=\EuScript{M}_U$ is the category of $U$-modules. In particular, if $U = A^G$ for some finite automorphism group $G$ of $A,$ the fusion action of $\EuScript{C}_A$ on $A$ is equivalent to the $G$-action on $A.$ 
\end{abstract}

\maketitle

\section{Introduction}
Motivated by orbifold theory, we explore the notion of fusion actions on condensible algebras within a modular tensor category in this paper. 

Orbifold theory studies the action of a group $G$ on a vertex operator algebra $A$. We highlight three key results from orbifold theory that are relevant to this paper. The first is the Schur-Weyl duality \cite{DLMcomp1996, KT1997}.
Assume that $A$ is a simple VOA and $G$ is a compact Lie group acting on $A$ continuously. Then $A$ 
decomposes into 
$$A=\bigoplus_{W\in \irr(G)}W\otimes A_W$$
where $W$ is a finite dimensional irreducible module and $A_W=\Hom_G(W,A)$ is the multiplicity space.
Then $A^G=\Hom_G(\BC,A)$ is a simple vertex operator subalgebra of $A$ and each $A_W$ is a (nonzero) irreducible 
$A^G$-module. Moreover $A_{W_1}$ and $A_{W_2}$ are isomorphic $A^G$-modules iff $W_1$ and $W_2$ are isomorphic $G$-modules.
The second is that if $G$ is a finite group, then $H\mapsto A^H$ gives a one to one correspondence between subgroups of $G$ and 
subalgebras of $A$ that contain $A^G$ \cite{DM1997, HMT1999}. Finally, if we further assume that $A^G$ is rational, $C_2$-cofinite and the weight of any twisted module is positive except $A$ itself, then the categorical dimension $\dim_{A^G}A^H=o(G)/o(H)$ for any subgroup $H$ \cite{DJX2013}. These assumptions on $A^G$ ensure  that $\dim_{A^G}A^H$ is well defined and computable. 

It is well known that not every subalgebra $U$ of $A$ can be realized as $A^G$ for some automorphism group $G.$ A natural question arises: can $U$ be realized as the fixed-point subalgebra under the action of something analogous to a group? To answer this question, we have to assume that $U$ is rational and $C_2$-cofinite. In this case the $U$-module category $\Mod{U}$ is a modular tensor category (MTC) \cite{huang2005vertex} and $A$ is a condensable algebra in $\Mod{U}$ \cite{HKL}. We provide a positive answer to this question by employing the categorical Schur-Weyl duality, which is proved in this paper.  Our approach to Galois correspondence is partly motivated by  \cite{Xu2014} on  conformal nets. 

We now state our results in details. Let $\CC$ be a modular tensor category (MTC) and $A$ a condensable algebra in $\CC.$ Then
$$A=\bigoplus_{x\in\irr(\CC)}W_x\otimes x$$
where $W_x=\Hom_{\CC}(x,A).$ It follows from \cite{KO02} that the category $\CC_A$ of right $A$-modules in $\CC$ is a spherical fusion category, and its subcategory $\CAo$ of local $A$-modules is an MTC.  The fusion algebra $K(\CC_A)$ of $\CA$ over $\BC$ is well-known  to be a semisimple associative algebra. Set 
$$e_\1=\frac{1}{\dim(\CAo)}\sum_{X\in\irr(\CAo)}d_A(X)X.$$
Then $e_\1$ is a primitive central idempotent of $K(\CC_A)$
and $(1-e_\1)K(\CC_A)$ is a semisimple ideal of $K(\CC_A).$ 
We define an action of the fusion category $\CC_A$ on   $W_x$ for any $x \in \CC$ in this paper, and we write $X f$ for the action of $X \in \CA$ on $f \in W_x$.  The $\CC_A$-invariant of $W_x$ is defined as 
$$W_x^{\CC_A}=\{f\in W_x\mid Xf=d_A(X)f \ {\rm for }\ X\in \CC_A\}.$$
The subobject $A^{\CC_A}:=\sum_{x}W_x^{\CC_A}x$ of $A$ in $\CC$ will be shown to inherit the algebra structure of $A$ in Section  \ref{s:Gal_Cor}.

Our first main result is the following Schur-Weyl duality.
\begin{CPM}\label{t:I} Let $A$ be a condensable algebra in any modular tensor category $\CC$. Then
    \begin{enumeri}
\item  The kernel of the action of $\CA$ on $W_A$ is equal to $(1-e_\1)K(\CC_A).$
\item  $W_x$ is an irreducible $K(\CA)$-module for any $x \in \irr(\CC)$ whenever $W_x \ne 0$.
\item  For any $x, y\in \iC$, $W_x \cong W_y$ as $K(\CA)$-modules if and only if $x=y$.
\item  The $\CA$-action defines an isomorphism of $\BC$-algebras $e_\1 K(\CC_A) \to \Hom_\CC(A,A)$.
\end{enumeri}
\end{CPM}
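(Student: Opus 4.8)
The plan is to treat the fusion action as a representation $\Phi\colon\KA\to\End_\BC(W_A)$ of the semisimple algebra $\KA$ and to read off the four assertions from the $\KA$-module structure of the spaces $W_x$. The starting point is the canonical identifications $\Hom_\CC(A,A)=\bigoplus_{x\in\iC}\Hom_\BC(W_x,W_x)$ and, via Frobenius reciprocity, $W_x\cong\Hom_\CA(A\otimes x,A)$. By naturality of the action in $x$ these show that, as a $\KA$-module, $W_A=\Hom_\CC(A,A)$ is isomorphic to $\bigoplus_{x\in\iC}W_x^{\oplus\dim W_x}$ and that $\End_\CC(A)\cong\prod_{x:W_x\neq0}\End_\BC(W_x)$. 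Thus (i)--(iv) are equivalent to the single assertion that the nonzero $W_x$ are pairwise non-isomorphic simple $\KA$-modules and that $e_\1$ is the central idempotent whose support is exactly the set of simples occurring in $W_A$. Granting (ii) and (iii), parts (i) and (iv) will then be assembled by a double-centralizer argument.

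For (ii) and (iii) I would compute the character $\chi_x(Z):=\tr_{W_x}\bigl(\Phi(Z)\bigr)$ for $Z\in\iCA$ and identify it with the generalized Frobenius--Schur indicator attached to the action; using the modularity of $\CC$ this indicator can be rewritten purely in terms of the $S$- and $T$-matrices of $\CC$ together with the relative dimensions $d_A$. With such a formula in hand, the orthogonality relations coming from the nondegeneracy of the $S$-matrix of $\CC$ yield $\langle\chi_x,\chi_y\rangle=\delta_{x,y}$ in the character ring of the semisimple algebra $\KA$, where $\langle\,,\rangle$ is the canonical pairing on class functions. Since over $\BC$ a module of a semisimple algebra is simple exactly when its character has norm one, and two modules are isomorphic exactly when their characters agree, this proves simultaneously that each nonzero $W_x$ is irreducible (ii) and that $W_x\cong W_y$ forces $x=y$ (iii). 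I expect this character computation to be the main obstacle: it requires translating the categorically defined action into a closed diagram, evaluating that diagram against the modular data, and then carrying out the Verlinde-type summation.

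Next I would pin down $e_\1$. Because the local modules $\CAo$ can be braided past every object of $\CA$, the subring $K(\CAo)$ is central in $\KA$, so by Schur's lemma each local $X$ acts on a simple $\KA$-module by a scalar and defines an algebra character of the commutative ring $K(\CAo)$. By modularity of $\CAo$, the element $e_\1=\tfrac{1}{\dim(\CAo)}\sum_{X\in\iCAo}d_A(X)X$ is precisely the idempotent of $K(\CAo)$ projecting onto the dimension character $X\mapsto d_A(X)$; hence on any simple module $e_\1$ acts as the identity when the local modules act through $d_A$ and as $0$ otherwise. The remaining point is to show that the local modules act on $W_x$ exactly by $d_A$ whenever $W_x\neq0$, which again follows from the character computation (the relevant monodromy of a local module with the content of $A$ is trivial), while every other simple $\KA$-module fails this condition. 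This identifies the support of $e_\1$ with $\{x:W_x\neq0\}$, so $\Phi(e_\1)=\id_{W_A}$ and $\Phi$ annihilates $(1-e_\1)\KA$; comparing with the fact that $(1-e_\1)\KA$ is a two-sided ideal gives $\ker\Phi=(1-e_\1)\KA$, which is (i).

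Finally, for (iv) I would exhibit the isomorphism directly. The space $W_A=\Hom_\CC(A,A)$ carries a right action of $\End_\CC(A)$ by precomposition, and the definition of the fusion action makes $\Phi(Z)$ commute with this right action; therefore $\im\Phi$ lies in the endomorphisms of the right $\End_\CC(A)$-module $W_A$, which is just left multiplication by $\End_\CC(A)$, giving a factorization $\Phi=L\circ\Psi$ with $\Psi\colon\KA\to\End_\CC(A)$ an algebra homomorphism. By (i) the map $\Psi$ descends to an injection $e_\1\KA\hookrightarrow\End_\CC(A)$, and the dimension count supplied by (ii)--(iii), namely $\dim_\BC e_\1\KA=\sum_{x:W_x\neq0}(\dim W_x)^2=\dim_\BC\End_\CC(A)$, forces $\Psi$ to be onto. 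Hence $\Psi$ restricts to the asserted algebra isomorphism $e_\1 K(\CC_A)\xrightarrow{\ \sim\ }\Hom_\CC(A,A)$, completing (iv). The one input outside the character computation is the right $\End_\CC(A)$-linearity of the action, which I would verify straight from its definition.
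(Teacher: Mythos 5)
Your overall architecture matches the paper's: decompose $\Hom_\CC(A,A)=\bigoplus_x\End_\BC(W_x)$, identify the characters of the $W_x$ with generalized Frobenius--Schur indicators, show local modules act by $d_A$, and finish (iv) by density/dimension count. But two steps that you treat as routine are in fact the load-bearing inputs, and as stated they do not go through. First, the criterion ``a module of a semisimple algebra is simple exactly when its character has norm one'' is false for $K(\CC_A)$ with the natural pairing $\langle\chi,\psi\rangle=\sum_{Y\in\iCA}\chi(Y)\psi(Y^*)$: for a simple module $M$ this sum equals $f_M\dim M$, where $f_M$ is the \emph{formal codegree} of $M$, and these codegrees vary from block to block (they are all equal only in the group-algebra case). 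So even after you compute $\sum_Y\chi_{W_x}(Y)\chi_{W_x}(Y^*)$, you cannot conclude irreducibility without knowing the value of $f_M$ for the simple constituent of $W_x$. The paper supplies exactly this missing normalization via Ostrik's theorem ($f_{M_x}=\dim(\CC_A)/d_A(\a(x^*))$), first showing $W_x$ is isotypic (only the central idempotent $e_\1^\AA e_{x^*}^\CC$ survives on it), writing $W_x\cong n_xM_x$, and then comparing $\Tr(\phi_{W_x},W_x)=n_xf_{M_x}[x,A]_\CC$ against the indicator computation to force $n_x=1$. The indicator computation itself, which you defer as ``the main obstacle,'' is not a straightforward Verlinde sum over $\irr(\CC)$: the sum runs over $\iCA$, and collapsing it requires the $\SL_2(\BZ)$-equivariance of the indicators $\nu_{(n,k)}$ together with $\nu_{(1,0)}^{\bX}(Y)=[X,Y]_{\CA}$ and $Z(\CC_A)\simeq\CC\boxtimes(\CAo)^{\rev}$.

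Second, your proofs of (i) and (iv) both rest on the unproved assertion that the simple modules of $e_\1K(\CC_A)$ are \emph{exactly} the nonzero $W_x$ (``every other simple $\KA$-module fails this condition''). Without this, $(1-e_\1)\KA\subseteq\ker\Phi$ is all you have, and your dimension count $\dim e_\1\KA=\sum_x(\dim W_x)^2$ is unjustified: if $e_\1\KA$ had extra blocks, $\Phi|_{e_\1\KA}$ would be surjective (which already follows from (ii)--(iii) by density) but not injective, and (i) would fail. The paper closes this by enumerating the indecomposable central idempotents of $K(\CC_A)$ as $\{e_x^\CC e_Y^\AA\}$ via the DMNO decomposition of $Z(\CC_A)$ and Ostrik's criterion for which of them are killed by the forgetful functor, and then checking $\rho(e_\1^\AA e_x^\CC)=P_{x^*}\neq0$ whenever $[x,A]_\CC\neq0$. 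So the skeleton of your argument is right, but both the irreducibility criterion and the block count need the center $Z(\CC_A)$ and the formal-codegree machinery that your proposal omits.
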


The Schur-Weyl duality for the action of a finite group $G$  on a condensable algebra $A$ in a modular tensor category $\CC$ with the assumption that $A^G=\1$ was also obtained in \cite[Thm. 2.11]{K02}. 

Although the duality result for the fusion category action on $A$ resembles the duality result in orbifold theory, 
 the proof in \cite{DLMcomp1996} for orbifold theory setting  cannot be extended to fusion action on a VOA. In the case of orbifold theory of a finite automorphism group $G$ of the VOA $A$, $A$ is a semisimple $G$-module and the key step involves analyzing the multiplicity 
spaces $A_W$ for irreducible $G$-modules $W$. In contrast, in our categorical setting, $A$ is a condensable algebra in an MTC  $\CC$. The relevant multiplicity spaces are $W_x$ of $x\in \iC.$ The generalized Frobenius-Schur
indicators \cite{NS10} play a crucial role in defining the $\CA$-action on $W_x$ and in the proof of Theorem I.

Note that the algebra  $e_\1 K(\CC_A)=e_\1 K(\CC_A) e_\1$ can also be interpreted in terms of hypergroup \cite{Bi2017}. In fact, $e_\1 K(\CC_A)$ is the quotient of hypergroup $K(\CC_A)$ by $K(\CAo).$

We also show that the action of a finite group $G$ on a VOA $A$ is a special case of a fusion category action, under the assumption that $A^G$ is rational and $C_2$-cofinite. In this case $\CC=\Mod{A^G}$ is the $A^G$-module category and $e_\1 K(\CA)$ is isomorphic to $\BC[G].$ Moreover, the action of $\CC_A$ recovers the action of $G$ on $A.$

As in the orbifold theory, we then use the duality result to give a Galois correspondence from the fusion action. Let $\LCA$ be the set of fusion categories of $\CC_A$ containing $\CAo$ and
$\LaA$ be the set of condensable subalgebras of $A.$ 
\begin{CPMII}\label{t:II} Let $\CC$ be a pseudounitary modular tensor category, and $A$ a condensable subalgebra in $\CC$. Then the assignment $\LCA \to \LaA$, $\BB \mapsto A^\BB$, defines an order-reversing bijection, whose inverse is given by assignment $\LaA \to \LCA$,  $(B, \iota) \mapsto (\CBo)_A$, where $(B, \iota)$ is defined in Section 5. In particular, $\BB = (\CC_{A^\BB}^0)_A$ for any $\BB \in \LCA$ and 
    $$
    \dim(\BB)= \frac{\dim(\CC)}{d(A)d(B)}\,.
    $$
    Moreover,  $A^{\CAo}=A$ and $A^{\CC_A}=\1$ which is  the unit object of $\CC.$
\end{CPMII}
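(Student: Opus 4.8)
The plan is to exhibit the two assignments as a pair of mutually inverse, order-reversing maps by coupling a dimension count with a Galois-connection argument, the numerical input coming from the Kirillov--Ostrik formulas and the structural input from Theorem I. Write $\Phi\colon \LCA\to\LaA$, $\Phi(\BB)=A^\BB$, and $\Psi\colon\LaA\to\LCA$, $\Psi(B)=(\CBo)_A$. First I would check both maps are well defined. That $\Phi(\BB)=A^\BB$ is a condensable subalgebra of $A$ is the content of the Galois-correspondence section, so I may assume it. For $\Psi$, the key point is that $B\subseteq A$ makes $A$ a connected (since $\Hom_\CC(\1,A)=\BC$ passes through the free--forgetful adjunction to $\Hom_{\CBo}(\1,A)=\BC$), commutative, separable algebra in the MTC $\CBo$, hence a condensable algebra there; the forgetful functor $\CBo\to\CC$ then realizes $(\CBo)_A$ as a fusion subcategory of $\CA$, and since a local $A$-module is a fortiori local over the subalgebra $B$, one gets $\CAo\subseteq(\CBo)_A$, so $\Psi(B)\in\LCA$. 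Both $\Phi$ and $\Psi$ are visibly order-reversing: more invariance conditions cut out a smaller subalgebra, while a smaller $B$ produces a larger category $\CBo$ of local modules and hence a larger $(\CBo)_A$.

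Next I would prove the dimension formula for $\Psi(B)=(\CBo)_A$. Using the Kirillov--Ostrik identities $\dim(\CBo)=\dim(\CC)/d(B)^2$ and $\FPdim_{\CBo}(A)=d(A)/d(B)$ (the latter because the forgetful functor divides Frobenius--Perron dimensions by $d(B)$), together with pseudounitarity of $\CC$ (which propagates to $\CBo$ and to $(\CBo)_A$, making $\dim=\FPdim$ throughout), I obtain
\[
\dim\big((\CBo)_A\big)=\frac{\dim(\CBo)}{\FPdim_{\CBo}(A)}=\frac{\dim(\CC)}{d(A)\,d(B)}\,.
\]
This both establishes the asserted formula, once the bijection identifies $\BB$ with $(\CC_{A^\BB}^0)_A$, and supplies the crucial numerical comparison for the bijectivity argument.

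To prove $\Phi$ and $\Psi$ are inverse, I would set up the antitone Galois connection via the two expansion inclusions $B\subseteq A^{(\CBo)_A}$ and $\BB\subseteq(\CC_{A^\BB}^0)_A$. The first is immediate, since the inclusion $B\hookrightarrow A$ presents $B$ as fixed by all of $(\CBo)_A$. The second asserts that every $A$-module in $\BB$ is local over $B=A^\BB$; this is where the definition of the fusion action through generalized Frobenius--Schur indicators must be translated into a statement about the braiding, so that the invariance $A^\BB=B$ becomes triviality of the double braiding of $\BB$ with $B$. Granting these, $\Phi\Psi\geq\id$ and $\Psi\Phi\geq\id$, so the triangle identity $\Psi\Phi\Psi=\Psi$ holds; applied to $B'=A^{(\CBo)_A}\supseteq B$ it gives $\Psi(B')=\Psi(B)$, and comparing dimensions via the displayed formula forces $d(B')=d(B)$, whence $B'=B$ and $\Phi\Psi=\id$. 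It then remains only to show $\Psi$ is surjective, equivalently the reconstruction $(\CC_{A^\BB}^0)_A=\BB$; by the inclusion already in hand this reduces to the dimension identity $\dim(\BB)=\dim(\CC)/\big(d(A)\,d(A^\BB)\big)$ for arbitrary $\BB\in\LCA$.

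I expect this last identity to be the main obstacle, and the place where Theorem I enters decisively: one must compute $d(A^\BB)=\sum_{x\in\iC}\dim(W_x^\BB)\,d(x)$ by expressing each multiplicity $\dim(W_x^\BB)$ as the average $\frac{1}{\dim(\BB)}\sum_{Y\in\irr(\BB)}d_A(Y)\,\tr_{W_x}(Y)$ of the fusion-action character, then evaluating the resulting weighted sum over $\iC$ using the irreducibility and pairwise inequivalence of the $W_x$ furnished by Theorem I. Finally the two boundary cases are read off: $A^{\CAo}=A$ because the local modules act on every $W_x$ through $e_\1$, i.e.\ by the scalar $d_A$ as for $W_A$ in Theorem I, so $W_x^{\CAo}=W_x$ for all $x$; and $A^{\CC_A}=\1$ because, by Theorem I, the trivial character $X\mapsto d_A(X)$ of $K(\CC_A)$ occurs in exactly one irreducible module $W_x$, namely $W_\1\cong\BC$, forcing $W_x^{\CC_A}=0$ for $x\neq\1$.
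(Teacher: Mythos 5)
Your overall architecture (prove $\Phi\Psi=\id$ and $\Psi\Phi=\id$ from two ``expansion'' inclusions plus a dimension count) is close in spirit to the paper's proof, and your character-averaging route to the identity $d(A^\BB)=\dim(\CC)/(d(A)\dim(\BB))$ is a genuinely workable alternative to the paper's derivation: one has $d(A^\BB)=\sum_x\dim(W_x^\BB)d(x)=\frac{1}{\dim\BB}\sum_{Y\in\irr(\BB)}d_A(Y)\tr_\CC(\rho(Y))$, and the sum collapses because $\tr(Y\id_A)=0$ for simple $Y\notin\CAo$ while $\rho(Y)=d_A(Y)\id_A$ for $Y\in\CAo$ (Proposition \ref{p:CAo}), giving $\dim(\CAo)d(A)/\dim(\BB)=\dim(\CC)/(d(A)\dim\BB)$. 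Note, though, that the ingredient you actually need there is Proposition \ref{p:CAo}, not the irreducibility and inequivalence of the $W_x$ as you suggest. Your treatment of the two boundary cases $A^{\CAo}=A$ and $A^{\CA}=\1$ is correct.

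The genuine gap is the inclusion $\BB\subseteq(\CC_{A^\BB}^0)_A$, i.e.\ the assertion that every object of $\BB$ is a \emph{local} $B$-module for $B=A^\BB$. You correctly identify this as the point where the indicator-defined action must be converted into a statement about the double braiding with $B$, but you then write ``granting these'' and never supply the argument; since both composites of your Galois connection (and in particular the triangle identity $\Psi\Phi\Psi=\Psi$ you invoke) depend on it, the proof does not close without it. This is exactly where the paper does its hardest work: from $\iota\in\CC(B,A)^\BB$ one gets $\pi\circ(X\iota)=d_A(X)\id_B$, taking the categorical trace identifies the resulting scalar with $\frac{d(B)d(X')}{d(A)}$ where $X'$ is the maximal local $B$-submodule of $X$ (via \cite[Lem.~4.3]{KO02}), so $d(X)=d(X')$, and only then does pseudounitarity force $X=X'$. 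Your proposal never explains where pseudounitarity enters, yet it is indispensable precisely here. A secondary, smaller issue: the first inclusion $B\subseteq A^{(\CBo)_A}$ is not ``immediate'' --- it requires either a computation in the style of Proposition \ref{p:CAo} showing that local $B$-modules act on $\iota$ by $d_A$, or (as the paper does) the stronger statement $A^{(\CBo)_A}\equiv(B,\iota)$ obtained by applying Theorem~I to $A$ as a condensable algebra in $\CBo$ and identifying the image of $\rho_B(e_\1^{(\CBo)_A})$ with the $B$-isotypic component of $A$, which is $B$ by connectedness.
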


A hypergroup action on a condensable algebra was previously introduced by Riesen in \cite{Rie2025}, and some results of \cite{Xu2013, Xu2014, Bi2017} on conformal nets are extended to vertex operator algebras. In particular, he proved that hypergroup action preserves duality. Moreover, if $A$ is an extension of a rational VOA $V$,  there is an hypergroup $K$ acting on $A$ such that $A^K=V.$ In contrast, we first introduce the fusion action of $\CC_A$ on the multiplicity spaces $W_x$ for any object $x$. We prove in Proposition \ref{p:diff_form}
that if $x=A$, our fusion action on $\id_A$ in $\CC(A,A)$ is exactly the hypergroup action on the algebra $A$ defined in \cite{Rie2025}. The fusion action on the multiplicity spaces 
is crucial to establishing our Schur-Weyl duality (Theorem I).

We would like to point out that such a categorical correspondence was first established in \cite[Thm. 4.10, Rem. 4.12]{DMNO} via the theory of relative center. In \cite[Thm. 4.10]{DMNO}, the subalgebra $B$ of $A$  corresponding a fusion subcategory $\BB$ of $\CA$ is given by $B=I(\1)$, where $I$ is the right adjoint of the forgetful functor $F_\BB:\CC \to Z_\BB(\CA)$, the relative center of $\BB$ in $\CA.$ In contrast, our approach follows the classical notion of Galois correspondence using \emph{invariance subspaces} of fusion actions and relies crucially on the Schur-Weyl duality established in Theorem~I. Connections between these two approaches will be discussed in details in Section \ref{s:relative}. 

Now, we elaborate the preceding categorical results in the context of vertex operator algebras. Let $U$ be a simple, rational, $C_2$-cofinite VOA of CFT type such that the weight of any irreducible module is positive except $U$ itself. Then the $U$-module category $\Mod{U}$ is a pseudounitary modular tensor category. Let $A$ be simple VOA which contains $U$ as a subVOA. Then $A\in \Mod{U}$ is a condensable algebra which has a decomposition
$$A=\bigoplus_{x\in\irr(\Mod{U})}W_x\otimes x$$
as before. Applying our results to $\CC=\Mod{U}$ and 
$A$, we have the following theorem.

\begin{CPMIII}\label{t:III} Using the fusion action of $\CC_A$ on $A$, we have 
 \begin{enumeri}
\item  $W_x$ is an irreducible $K(\CC_A)$-module for any $x \in \irr(\CC)$ whenever $W_x \ne 0$.
\item  For any $x, y\in \iC$, $W_x \cong W_y$ as $K(\CC_A)$-module if and only if $x=y$.
\item  The restriction  $e_\1 K(\CC_A) \to \Hom_U(A,A)$ is an isomorphism of algebras.
\item  $\BB \mapsto A^\BB$  gives
a one-to-one correspondence between the fusion subcategories of $\CA$ containing $\CAo$ and subVOAs of $A$ containing $U.$ In particular, $A^{\CAo}=A$ and 
$A^{\CA}=U.$
\end{enumeri}
 \end{CPMIII}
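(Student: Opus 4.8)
The plan is to derive Theorem~III as the specialization of Theorems~I and~II to the modular tensor category $\CC = \Mod{U}$, so the first task is to verify that the hypotheses of those theorems hold in the present VOA setting. By Huang's theorem \cite{huang2005vertex}, the standing assumptions that $U$ is simple, rational, $C_2$-cofinite and of CFT type with positive conformal weight on every nontrivial irreducible guarantee that $\Mod{U}$ is a pseudounitary modular tensor category; and by \cite{HKL} a simple VOA extension $U \subseteq A$ is precisely the datum of a condensable algebra $A$ in $\Mod{U}$, with exactly the $U$-module decomposition $A = \bigoplus_{x \in \irr(\Mod{U})} W_x \otimes x$ recorded in the statement. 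Hence Theorems~I and~II apply verbatim to this $\CC$ and $A$, and the remaining work is to translate each categorical conclusion into VOA language.

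Parts (i) and (ii) are then immediate: they are exactly parts (ii) and (iii) of Theorem~I read off for $\CC = \Mod{U}$, since the irreducibility of each nonzero $W_x$ as a $K(\CC_A)$-module and the injectivity of $x \mapsto W_x$ on isomorphism classes are internal to the category and make no reference to the vertex operation. For part (iii) the single point is to identify the two endomorphism algebras. Because $\CC = \Mod{U}$ is by definition the category of $U$-modules, a morphism of the underlying objects $A \to A$ in $\CC$ is precisely a $U$-module homomorphism, so $\Hom_\CC(A,A) = \Hom_U(A,A)$; Theorem~I(iv) then provides the algebra isomorphism $e_\1 K(\CC_A) \to \Hom_U(A,A)$ induced by the fusion action, which is the assertion of (iii).

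The substance lies in part (iv), which I would obtain by transporting the Galois correspondence of Theorem~II through the \cite{HKL} dictionary. Theorem~II gives an order-reversing bijection $\BB \mapsto A^\BB$ between $\LCA$, the fusion subcategories of $\CC_A$ containing $\CAo$, and $\LaA$, the condensable subalgebras of $A$, with inverse $B \mapsto (\CBo)_A$. To convert the right-hand side into VOA terms I would invoke \cite{HKL} once more: a condensable subalgebra $B$ with $\1 \hookrightarrow B \hookrightarrow A$ in $\Mod{U}$ is the same datum as a simple intermediate VOA $U \subseteq B \subseteq A$, the unit object $\1$ corresponding to $U$ itself. Thus $\LaA$ is identified with the poset of intermediate subVOAs, and the bijection of Theorem~II becomes the claimed one-to-one correspondence between fusion subcategories of $\CA$ containing $\CAo$ and subVOAs of $A$ containing $U$. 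The two boundary cases $A^{\CAo} = A$ and $A^{\CC_A} = \1$ of Theorem~II then read as $A^{\CAo} = A$ and $A^{\CA} = U$ under this identification.

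I expect the main obstacle to be the careful matching of the two dictionaries rather than any single hard estimate. One must check that the categorical invariant $A^\BB$, defined purely through the fusion action on the morphism spaces $W_x$, is closed under the vertex operation and so is a genuine subVOA, and that it is simple so that \cite{HKL} applies to it; conversely, one must verify that every intermediate subVOA $U \subseteq B \subseteq A$ is a condensable subalgebra lying in the image of $\mathcal{L}_{\operatorname{alg}}$. Here the rationality of $U$ makes $A$ a completely reducible $U$-module, while the positive-weight hypothesis forces $A$ to be connected, whence $\Hom_U(U,B) \subseteq \Hom_U(U,A) = \BC$ shows each intermediate $B$ is connected; commutativity and separability of $B$ are inherited from $A$, so $B$ is condensable and \cite{HKL} returns a simple VOA structure. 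These hypotheses are used precisely to make the HKL correspondence restrict cleanly to subobjects of $A$, and once that compatibility is established (iv) follows formally from Theorem~II.
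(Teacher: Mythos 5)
Your proposal is correct and follows the same route as the paper, whose entire proof of Theorem~III is the observation that it follows from Theorems~I and~II applied to $\CC=\Mod{U}$; your additional care in verifying the hypotheses via \cite{huang2005vertex} and in translating condensable subalgebras into intermediate subVOAs through the \cite{HKL} dictionary simply makes explicit what the paper leaves implicit.
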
 

{\bf Acknowledgment:} We would like to thank Terry Gannon for some discussions  related to this paper and Victor Ostrik for pointing out Shimizu's work \cite{Shi} relevant to part of Corollary \ref{c1}. S.-H. N. was partially supported by the Simons Foundation MPS-TSM-00008039.

\section{Basic terminologies on fusion categories}
In this section, some basic terminologies, conventions and results on fusion categories are provided for the rest this paper. The readers are referred to \cite{EGNO} for more details.

 \subsection{Conventions and terminologies} Let $\cM$ be a semisimple $\BC$-linear category. We denote by $\irr(\cM)$ a complete set of nonisomorphic simple objects of  $\cM$. Following the conventions in \cite{MacLane}, the morphism space of $\cM$ from $x$ to $y$ is simply abbreviated as $\cM(x,y)$, instead of $\Hom_\cM(x,y)$, and $[x,y]_\cM := \dim_\BC \cM(x,y)$. 
 
 Throughout this paper, a fusion category $\cM$ is a semisimple $\BC$-linear rigid finite tensor category with finite-dimensional hom-spaces and $\irr(\cM)$ is a finite set containing the unit object $\1$. The left dual object of any object $x \in \cM$ is denoted by $x^*$. The duality of a fusion category $\cM$ can be extended to a contravariant tensor functor $(-)^*$ from $\cM$ to its reversed tensor category $\cM^{\rev}$. A fusion subcategory of $\cM$ always means a full subcategory which is closed under the tensor product of $\cM$. 

 The Grothendieck ring $K_0(\cM)$ of a fusion category $\cM$ is a $\BZ_+$-based ring. In particular,  $K_0(\cM)$ an algebra over $\BZ$ with the canonical basis  $\irr(\cM)$. In this paper, we often needs to deal with the $\BC$-algebra $K(\cM):=K_0(\cM)\o_\BZ \BC$. It is well-known that $K(\cM)$ is a semisimple $\BC$-algebra.  

 A fusion category $\cM$ equipped with an isomorphism  $j: \id_\cM \to (-)^{**}$ of tensor functors, called a pivotal structure of $\cM$, is  a pivotal fusion category. One can defines a categorical trace $\tr(f)$ for any endomorphism $f: x \to x$ in a pivotal fusion category. The pivotal structure of a pivotal fusion category is called spherical if $\tr(f)=\tr(f^*)$ for all  endomorphisms $f$ in $\cM$. In this case,  the pivotal fusion category will be called a spherical fusion category. For any object $x$ of a spherical fusion category, we denote by $d^\cM\!(x):=\tr(\id_x)$, and is called the (pivotal) dimension of $x$. We often drop the superscript when there is not any ambiguity. 
 
 A pivotal tensor category $\cM$ is called \emph{strict}  if $\cM$ is a strict tensor category and $(-)^{**}=\id_\CC$. Since every pivotal tensor category is equivalent to a strict one (cf. \cite[Thm. 2.2]{NSind}), we may always assume a pivotal fusion category to be strict, and graphically calculus can be applied in this case.

   For any objects $x,y$ of a spherical fusion category $\cM$, there is a canonical nondegenerate bilinear form $\langle \cdot, \cdot \rangle: \cM(x, y) \o \cM(y, x) \to \BC$ given by $\langle f, g \rangle = \tr(f g)$ for $f \in \cM(x, y)$ and $g \in \cM(y, x)$.
 
 Similarly, a fusion category equipped with a \emph{braiding} is  called a braided fusion category. A \emph{premodular} category $\cM$ is a spherical braided fusion category , and it is often called a \emph{ribbon category}. The underlying ribbon structure $\theta^{\cM}_x: x \to x$ is a natural isomorphism of the identity functor $\id_\cM$ and it can be constructed from the Drinfeld isomorphism and the spherical pivotal structure of $\cM$. 

 Let $\cM$ be a premodular category equipped with the braiding $R$. The $S$-matrix of $\cM$ is the complex square matrix indexed by $\irr(\cM)$ and defined by
 \begin{equation} \label{eq:S-matrix}
     S^\cM_{x,y}  :=\tr(R_{y, x^*} R_{x^*,y}) \quad\text{ for } x, y \in \irr(\cM)\,.
 \end{equation}
The premodular category $\cM$ is said to be a \emph{modular tensor category} (MTC) if its $S$-matrix is invertible.   For $x \in \irr(\cM)$, the component $\theta^\cM_x$ of the ribbon structure $\theta^\cM$ of $\cM$ is a scalar multiple of the identity. In this case, we simply identify $\theta_x^\cM$ as the scalar and the $T$-matrix of $\cM$ is defined as $T^\cM_{x,y}:= \delta_{x,y} \theta_x^\cM$.

A \emph{pseudounitry} fusion category admits a canonical spherical pivotal structure so that the dimension of  any nonzero object is positive (cf. \cite{ENO}). Since there could be choices of pivotal structures on  an MTC (cf. \cite[Lem. 2.4]{RankFinite}),  we will assume the canonical spherical structure for any pseudounitary MTC or fusion category in this paper. 

Let $\CC$ be a modular tensor category with the underlying spherical fusion category being strict pivotal.  An object $A$ of $\CC$ is said to be an algebra if there are morphisms $u_A : \1 \to A$ and $m_A: A \o A \to A$ such that the following diagrams commute:
$$
\xymatrix{
A  \o \1 \ar[r]^-{A \o u_A} & A \o A \ar[d]^-{m_A} & \1\o A \ar[l]_-{u_A \o A}\\
&A\ar@{=}[lu] \ar@{=}[ru]&
}
\quad, \quad
\xymatrix{
A \o A \o A \ar[d]_-{A \o m_A} \ar[r]^-{m_A \o A} & A \o A \ar[d]^-{m_A}\\
A \o A \ar[r]^-{m_A} & A
}\,.
$$
The  algebra $A$ in $\CC$ is said to be \emph{connected} if $[\1, A]_\CC=1$, and \emph{commutative} if $m_A R_{A,A} = m_A$. We borrow the terminology in topological order that an algebra $A$ is said to be \emph{condensable} if $A$ is commutative and connected such that $d(A) \ne 0$, $\theta_A = \id_A$ and the composition $\e m_A: A \o A \to \1$ is nondegenerate (cf. \cite{KO02}), where $\e \in \CC(A, \1)$ denotes the section of $u_A$. These conditions imply $\CA$ is separable by \cite[Thm. 3.3]{KO02}. 

According to \cite{KO02}, if $A$ is a condensable algebra in an MTC $\CC$, the category  $\CA$ of right $A$-modules in $\CC$ is also a spherical fusion category with $A$ being the unit object, and the tensor product $\oa$ over $A$.  The pivotal dimension of any object $X \in \CA$ will be denoted by $d_A(X)$, which is equal to $\frac{d(X)}{d(A)}$, where $d$ is the dimension function of $\CC.$

\subsection{\texorpdfstring{$A$}{A}-modules and their graphical conventions}
Let $\CC$ be an MTC and $A\in \CC$ a condensable algebra. Without loss of generality, we may assume $\CC$ is strict pivotal. The induction function $\a : \CC \to \CC_A$ defined by $\a(x) = x \o A$ for $x \in \CC$ and the forgetful functor $\CC_A \to \CC$ are 2-sided adjoints of each other \cite[Lem. 1.16]{KO02}. In particular, we have the natural isomorphism of $\BC$-linear spaces
$$
   \CC_A(\a(x), Y) \cong \CC(x, Y) 
$$
and hence $[\a(x), Y]_{\CC_A} = [x, Y]_{\CC}$ for any objects $x \in \CC$ and $Y \in \CC_A$. Since $A$ is the unit object of $\CC_A$, we will write $A$ as $\1$ in $\CC_A$ when the context is clear, and denote by $\vp_x$ the natural isomorphism of vector spaces
\begin{equation}\label{eq:adj}
\vp_x: \CC_A(\a(x),\1) \xrightarrow{\sim}  \CC(x, A)\,,\quad f \mapsto f \circ (x\o u_A)  
\end{equation}
for $f \in \CC_A(\a(x),\1)$, where $u_A: \1 \to A$ is the unit map of the algebra $A$.  We use the following diagrams for the unit map and multiplication of $A$:
$$
 u_A:= \,\vcenter{\xy  0;/r1pc/:
{\ar@{.} (-1,1.1)*{\circ};(-1,-1.3)},
\endxy} \qquad\text{and}\qquad
m_A:= \vcenter{\xy  0;/r1pc/:
{\ar@{.} (-2,1.5);(-1,0)},
{\ar@{.} (0,1.5);(-1,0)},
{\ar@{.} (-1,0);(-1,-1.5)},
\endxy} \quad,
$$
where the dotted line will always denote the object $A$. Thus, in terms of diagram,
\begin{equation}\label{eq:vp}
\vp_x(f)\,=\,
 \vcenter{\xy  0;/r1pc/:
 (0,0)*=<20pt,10pt>[F]{\scriptstyle f},
{\ar@{-} (-.5,2)*+{\scriptstyle x};(-.5,.4)},
{\ar@{.} (.5,2)*{\circ};(.5,.4)},
{\ar@{.} (0,-.4);(0,-2)},
\endxy} \quad \text{ for } f \in \CA(\a(x), \1)\,.
\end{equation}
The inverse of $\vp_x$, denoted by $\psi_x$, is given by the diagram:
\begin{equation}\label{eq:psi}
   \psi_x(g)\,=\,
 \vcenter{\xy  0;/r1pc/:
 (0,0)*=<10pt,10pt>[F]{\scriptstyle g},
{\ar@{-} (0,2)*+{\scriptstyle x};(0,.4)},
{\ar@{.}@`{(1,1),(.8, -1.2)} (1,2);(0,-1.5)},
{\ar@{.} (0,-.4);(0,-2)},
\endxy} \quad \text{ for } g \in \CC(x, A)\,. 
\end{equation}

For $Y \in \CC_A$, we will denote by $\mu_Y: Y \o A \to Y$  its underlying $A$-module structure morphism. The right $A$-module $Y$ naturally admits two left $A$-module structures via the braiding $R$ of $\CC$. We abbreviate by $Y^+$ the $A$-bimodule with the left $A$-module structure $\mu_Y^+$ of $Y$ defined by
$$
A \o Y \xrightarrow{R_{Y,A}^{-1} } Y \o A \xrightarrow{\mu_Y} Y\,.
$$
It is immediate to check that $\CC_A(X,Y) = {}_A\CC_A(X^+, Y^+)$ and so $\CC_A$ can be considered as a subcategory of ${}_A\CC_A$ via the functor $Y \mapsto Y^+$. 
Similarly, $Y^-$ is the $A$-bimodule with the left $A$-module structure morphism $\mu_Y^-$ of $Y$ defined as
$$
A \o Y \xrightarrow{R_{A,Y}} Y \o A \xrightarrow{\mu_Y} Y\,.
$$
Throughout this paper, unless stated otherwise, every object $Y$ of $\CC_A$ will be considered as the $A$-bimodule $Y^+$. 

The braiding $R_{x,y}:x \o y \to y\o x$ for any $x,y \in \CC$, and its inverse are depicted as
$$
R_{x,y}=\vcenter{\xy 0;/r1pc/:
\vtwistneg~{(-.5,1)*+{\scriptstyle x}}{(.5,1)*+{\scriptstyle y}}{(-.5, -1)}{(.5,-1)},
\endxy}, \quad 
R_{x,y}^{-1}=\vcenter{\xy 0;/r1pc/:
\vtwist~{(-.5,1)*+{\scriptstyle y}}{(.5,1)*+{\scriptstyle x}}{(-.5, -1)}{(.5,-1)},
\endxy}\,.
$$
We will use the following diagram for the left and right action of $A$ for any object $Y$ of $\CC_A$:
$$
\mu_Y=\vcenter{\xy 0;/r1pc/:
(0,0); (0,1.5)*+{\scriptstyle{Y}} **\dir{-},
(0,0); (0,-1.5)*+{\scriptstyle{Y}} **\dir{-},
(0,0); (1.5,1.5)*+{\scriptstyle{A}} **\dir{..},
\endxy} \quad\text{and }\quad
\mu^+_Y=\vcenter{\xy 0;/r1pc/:
{\ar@{-} (0,1.5)*{\scriptstyle Y};(0,-1.5)*+{\scriptstyle Y}},
{\ar@{.}@`{ (1.8, -.1)}|(.3)\hole(-1.5,1.5)*+{\scriptstyle A};(0,-.8)},
\endxy} 
$$

The tensor product $X \oa Y$  for any $X,Y \in \CC_A$ is defined as the cokernel 
$X \o Y \xrightarrow{p_A} X \oa Y$ of 
$
X \o A \o Y \xrightarrow{\mu_X \o Y - X \o \mu^+_Y} X \o Y\,.
$
In particular, there are unique natural isomorphisms of right $A$-modules  $l_Y: A \oa Y \to Y$ and $r_Y: Y \oa A \to Y$  which satisfy the commutative diagrams:
$$
\xymatrix{
A \o Y \ar[d]_-{p_A} \ar[r]^{\mu_Y^+} & Y \\
A \oa Y \ar[ru]_{l_Y}
}, \qquad
\xymatrix{
 Y \o A \ar[d]_-{p_A} \ar[r]^{\mu_Y} & Y \\
Y \oa A \ar[ru]_{r_Y} 
}\,. 
$$

For any object $y \in \CC$, the coevaluation $\coev_y: \1 \to y \o y^*$ and the evaluation $\ev_y: y^* \o y \to \1$ morphisms associated with the left dual $y^*$ of $y$ are respectively depicted by the diagrams  
$$
\vcenter{\xy 0;/r1pc/:
(0,2.1)\vcap[2],
(2.2,1.8)*{\scriptstyle y^*},
(0,1.8)*{\scriptstyle y},
(5,2)*{,},
(8,2.7)\vcap[-2],
(8,3.3)*{\scriptstyle y^*},
(10,3.3)*{\scriptstyle y},
(12,2)*{.},
\endxy}
$$
Since $\CC$ is assumed to be strict pivotal, the evaluation and coevaluation morphisms for right dual of $y$ is given by $\coev'_y =\coev_{y^*}$ and $\ev'_y= \ev_{y^*}$, and their diagrams are respectively 
$$
\vcenter{\xy 0;/r1pc/:
(0,2.1)\vcap[2],
(2.2,1.8)*{\scriptstyle y},
(0,1.8)*{\scriptstyle y^*},
(5,2)*{,},
(8,2.7)\vcap[-2],
(8,3.3)*{\scriptstyle y},
(10,3.3)*{\scriptstyle y^*},
(12,2)*{.},
\endxy}
$$

Following \cite{KO02}, $\CC_A$ is spherical and the left dual object of $Y \in \CC_A$ is the left dual object $Y^*$ in $\CC$ with the right $A$-action given by
$$
\mu_{Y^*} = \vcenter{\xy 0;/r1pc/:
(0,0); (0,1.5)*+{\scriptstyle{Y^*}} **\dir{-},
(0,0); (0,-1.5)*+{\scriptstyle{Y^*}} **\dir{-},
(0,0); (1.5,1.5)*+{\scriptstyle{A}} **\dir{..},
\endxy} :=
\vcenter{\xy 0;/r1pc/:
(0,-.5)\vcap[-1],
(1,.5)\vcap[1],
(0,-.5); (0,2)*+{\scriptstyle{Y^*}} **\dir{-},
(2,.5); (2,-2)*+{\scriptstyle{Y^*}} **\dir{-},
%{\ar@{-}@`{ (0, 0)}|(.3)\hole (0,1.5)*{\scriptstyle Y};(2,)*+{\scriptstyle Y}},
(1,.5); (1,-.5)**\dir{-},
{\ar@{.}@`{ (2, -.2)}|(.5)\hole  (1,0);(3,1.5)*+{\scriptstyle A}},
\endxy} .
$$
The coevaluation $\ol \coev_Y : A \to Y \oa Y^*$ and the evaluation $\ol \ev_Y: Y^* \oa Y \to A$ for the left dual of $Y$ in $\CA$, and the coevaluation $\ol \coev'_Y : A \to Y^* \oa Y$ and the evaluation $\ol \ev'_Y: Y \oa Y^* \to A$ for the right dual of $Y$ in $\CA$ are respectively depicted as morphisms in $\CC_A$ by
$$
\def\objectstyle{\scriptstyle}
{\color{blue}\xy (0,-1)="ctext",,
{"ctext"+(-4,0)="v1"*+{\color{black} Y}; "ctext"+(4,0)="v2"*+{\color{black} Y^*} **\crv{"v1"+(0,8)&"v2"+(0,8)}}\endxy}, \quad
{\color{blue}\xy (0,-1)="ctext",,
{"ctext"+(-4,4)="v1"*{\color{black} Y^*}; "ctext"+(4,4)="v2"*{\color{black} Y} **\crv{"v1"+(0,-7)&"v2"+(0,-7)}}\endxy},\quad
{\color{blue}\xy (0,-1)="ctext",,
{"ctext"+(-4,0)="v1"*+{\color{black} Y^*}; "ctext"+(4,0)="v2"*+{\color{black} Y} **\crv{"v1"+(0,8)&"v2"+(0,8)}}\endxy},\quad\text{and}\quad
{\color{blue}\xy (0,-1)="ctext",,
{"ctext"+(-4,4)="v1"*+{\color{black} Y}; "ctext"+(4,4)="v2"*{\color{black} Y^*} **\crv{"v1"+(0,-7)&"v2"+(0,-7)}}\endxy}.
$$

For any $x \in \CC$, $Y \in \CC_A$, both $x \o Y$ and $Y \o x$ are naturally right $A$-modules with the structure morphisms  
$$
\mu_{x \o Y} = x\o\mu_Y \quad \text{ and } \quad \mu_{Y \o x} = (\mu_Y \o x)(Y \o R_{x,A}),
$$
and $R_{x,Y}: x \o Y \to Y \o x$ is a right $A$-module isomorphism. In particular, $x \o Y \cong Y \o x$ in $\CC_A$. Moreover,   $\a(x)$ admits a natural half-braiding $\s_{\a(x), -}$ on $\CC_A$ given by
$$
\s_{\a(x), Y} :=\left( \a(x) \oa Y  \xrightarrow{x \o l_Y} x \o Y \xrightarrow{R_{x, Y}} Y \o x \xrightarrow{r_Y^{-1} \o x} Y \oa \a(x)  \right)\,. 
$$
Thus, $(\a(x), \s_{\a(x), -})$ is an object in the center $Z(\CC_A)$, and we will denote  this object in $Z(\CC_A)$ by $\a^+(x)$ as it depends on the ``+" choice of the embedding of $\CC_A$ into ${_A}\CC_A$. Moreover, $\a^+: \CC \to Z(\CC_A)$ defines a fully faithful braided tensor functor (cf. \cite[Cor. 3.28]{DMNO}). Since $d_A(\a(x)) = d(x)$ for $x \in \CC$, $\a^+$ also preserves pivotal structures. Thus, $\a^+(\CC)$ is a modular subcategory of $Z(\CC_A)$.   Hence, $\CC$ can be considered as modular subcategory of $Z(\CC_A)$ via $\a^+$. The half-braiding $\s_{\a(x), Y}$ will be simply depicted as
$$
\s_{\a(x), Y} = \vcenter{\color{blue} \xy 0;/r.4pc/:
{\ar@{-}@`{(0,-2.5), (3,-3)} (0,0)*+{\color{black}\scriptstyle \a(x)}; (3,-5)},
{\ar@{-}@`{(3,-2.5),(0,-3)}|(0.5){\hole} (3,0)*+{\color{black}\scriptstyle Y}; (0,-5)},
\endxy}\,.
$$
Note that  the half-braiding $\s_{\a(x), Y}$ is only natural in $Y$ and the blue lines  or morphisms indicate they are considered in $\CC_A$. Moreover,  $\a^+(\1) = (\1, \s_{\a(\1), -})$ in $Z(\CA)$, where  $\s_{\a(\1), Y}=\id_Y$ for $Y \in \CC_A$.

Recall that the category $\CAo$  of local $A$-modules, which are those $Y \in \CA$  satisfying the condition $\mu^+_Y =  \mu_Y^-$, is a fusion subcategory of $\CA$. Moreover, the braiding of $\CC$ descends to nondegenerate braiding on $\CAo$ which makes $\CAo$ a modular tensor category (\cite[Thm. 4.5]{KO02}). For $X \in \CAo$,  $X$ also admits a natural half-braiding $\s_{X,-}$ on $\CC_A$ defined as follows: One can check directly 
$$
0 = \Big(X \o A \o Y \xrightarrow{\mu_X \o Y - X \o \mu^+_Y} X \o Y \xrightarrow{R_{Y,X}^{-1}} Y \o X \xrightarrow{p_A} Y \oa X\Big)\,.
$$
There exists a unique isomorphism $\s_{X,Y}: X \oa Y \to Y \oa X$ of $A$-modules such that the diagram 
\begin{equation} \label{eq:local_module_briading}
 \xymatrix{
X \o Y \ar[r]^{R_{Y,X}^{-1}} \ar[d]_{p_A} & Y \o X \ar[d]^{p_A} \\
X \oa Y \ar[r]^{\s_{X,Y}} & Y \oa X
}\,   
\end{equation}
commutes,  and $(X, \s_{X,-}) \in Z(\CC_A)$. It is easy to see that $(X, \s_{X, -})$ and $\a^+(x)$ centralize each other for any $x \in \CC$ and $X \in \CAo$, and $\b: (\CAo)^{\rev} \to Z(\CC_A)$, $X \mapsto (X, \s_{X,-})$, can be extended to a fully faithful braided tensor functor that preserves dimensions. By Comparing their Frobenius-Perron dimensions, we find $Z(\CC_A) \simeq \CC \boxtimes (\CAo)^{\rev}$ (cf. \cite[Cor. 3.30]{DMNO}). More precisely, the functor 
\begin{equation} \label{eq:decomp}
    \a^+ \boxtimes \b: \CC \boxtimes (\CAo)^{\rev} \to Z(\CC_A), \,x \boxtimes X \mapsto \a^+(x) \o \b(X)
\end{equation}
defines a braided tensor equivalence preserving pivotal structures.

\section{Fusion actions and Frobenius-Schur indicators} \label{s:indicator}
Let $\CC$ be an MTC and $A$ a condensable algebra in $\CC$. In this section, we define the actions of the objects of $\CA$ on $A$, and discuss their relations to generalized Frobenius-Schur indicators, which are essential to the proof of categorical Schur-Weyl duality.
\subsection{Actions of spherical fusion categories on some hom-spaces}  In this subsection, we define an action of a spherical fusion category $\BB$ on the hom-spaces $\BB(X, \1)$ for any $\bX=(X, \s_{X,-}) \in Z(\BB)$ by using some operators whose traces are generalized Frobenius-Schur indicators. 

The notion of the second Frobenius-Schur indicators was introduced for the complex representations of finite groups. They have been generalized to the settings of semisimple Hopf algebras \cite{LM}, quasi-Hopf algebras \cite{MNg, NS4}, rational conformal field theory \cite{Ban}  and fusion categories \cite{FGSV, FS}. The higher Frobenius-Schur indicator $\nu_n(V)$ for any representation $V$ of a semisimple Hopf algebra was introduced in \cite{LM}, and studied extensively in \cite{KSZ}.  These developments inspired further generalizations in semisimple Hopf algebras \cite{SZ}, and in pivotal tensor categories \cite{NS07, NSind, NS10}. The fusion actions to be defined involve the generalized Frobenius-Schur indicators for spherical fusion categories introduced in \cite{NS10}, which are briefly recalled as follows.

Let $\BB$ be a spherical fusion category, $(n,k)$ a pair of integers, $Y \in \BB$ and $\bX = (X, \s_{X,-}) \in Z(\BB)$. Assuming the conventions $Y^{\o n} =(Y^*)^{\o |n|}$ if $n < 0$ and $Y^{\o 0} = \1$ in $\BB$, one can define a linear operator $E_{\bX, Y}^{(n,k)} : \BB(X, Y^{\o n{}}) \to \BB(X, Y^{\o n})$ via the half-braiding $\s_{X,-}$ of $\bX$.  The generalized Frobenius-Schur indicator is defined as $\nu_{(n,k)}^{\bX}(Y): = \Tr(E^{(n,k)}_{\bX, Y})$. Here $\Tr$ denotes the ordinary trace of a linear operator. These generalized Frobenius-Schur indicators are related by the $\SLZ$-actions on $K(Z(\BB))$ and on $\BZ^2$. These relations will be discussed in more detail when we prove the Schur-Weyl duality in the next section. 

For $n> 0$, the operator $E^{(n,1)}_{\bX, Y}$ is invertible and $E^{(n,k)}_{\bX, Y}=\Big(E^{(n,1)}_{\bX, Y}\Big)^k$. In particular, $E^{(n,0)}_{\bX, Y} = \id$ and hence
\begin{equation}\label{eq:(n,0)}
    \nu_{(n,0)}^{\bX}(Y) = [X, Y^{\o n}]_\BB\,.
\end{equation}
If $\BB$ is strict pivotal, the operator $E^{(n,1)}_{\bX, Y}$ can be defined diagrammatically, namely 
 \begin{equation*}
   E_{\bX,Y}^{(n,1)}: 
    {\xy (0,0)="ctext", "ctext"*=<20pt,11pt>[F]{\scriptstyle f},
    {\ar@{-}(0,2);(0,10)*+{\scriptstyle X}},
{"ctext"+(0,-2);"ctext"+(0,-10)*+{\scriptstyle Y^{\o n}}**\dir{-}},
\endxy} \quad \mapsto\quad 
  {\xy (0,0)="ctext", "ctext"*=<20pt,11pt>[F]{\scriptstyle f},
{"ctext"+(2,-2);"ctext"+(2,-10)*{\scriptstyle Y^{\o n-1}}**\dir{-}},
{\vcross~{"ctext"+(-8,14)*+{\scriptstyle X}}{"ctext"+(0,10)="v0"}{"ctext"+(-8,0)="v2"}{"ctext"+(0,2)}},
{"v0"; "v0"+(8,0)="v1" **\crv{"v0"+(0,4)& "v1"+(0,4)}},
{"v1"; "v1"+(0, -20) **\dir{-}?(1)+(2,0)*{\scriptstyle Y}},
{"v2";"v2"+(0,-2)="v3" **\dir{-}},
{"v3"; "ctext"+(-2,-2)="y" **\crv{"v3"+(0,-4)& "y"+(0,-4)}},
\endxy} \text{ where}\quad
\s_{X,Y^*} = \vcenter{\xy 0;/r.4pc/:
{\ar@{-}@`{(0,-2.5), (3,-3)} (0,0)*+{\color{black}\scriptstyle X}; (3,-5)},
{\ar@{-}@`{(3,-2.5),(0,-3)}|(0.5){\hole} (3,0)*+{\color{black}\scriptstyle Y^*}; (0,-5)},
\endxy}.
\end{equation*}

The operator $E_{\bX, Y}^{(0,-1)}$ defines a left action of $Y$ on $\BB(X, \1)$.  Recall from \cite[Sec. 2]{NS10} that $E_{\bX, Y}^{(0,-1)}: \BB(X,\1) \to \BB(X,\1)$ is a $\BC$-linear operator defined by
\begin{align*}
E_{\bX, Y}^{(0,-1)}(f) & := \Big(X \xrightarrow{r^{-1}} X \o \1 \xrightarrow{X \o \coev} X \o  (Y \o Y^*) \xrightarrow{a^{-1}_{X, Y, Y^{*}}} (X \o Y) \o Y^{*} \xrightarrow{\s_{X, Y}\o Y^*}\\
& (Y  \o  X) \o Y^* \xrightarrow{(Y \o f) \o Y^*}  (Y \o \1) \o Y^* \xrightarrow{r \o Y} Y\o Y^* \xrightarrow{j \o V^*} Y^{**}\o Y^* \xrightarrow{\ev} A\Big), 
\end{align*}
where $a$ and $r$ respectively denote the associativity and the right unit isomorphisms of $\BB$.  If $\BB$ is strict pivotal, we can describe  $E_{\bX, Y}^{(0,-1)}$ as
\begin{equation}\label{eq: indicator_diag}
    E_{\bX, Y}^{(0,-1)}: 
\vcenter{\xy 0;/r1pc/:
(0,0)*=<10pt,10pt>[F]{\scriptstyle f},
{\ar@{-}@`{(0,2)} (0,.45);(0,2)*+{\color{black}\scriptstyle X}},
\endxy} \quad \mapsto \quad 
\vcenter{\xy 0;/r1pc/:
(0,0)*=<10pt,10pt>[F]{\scriptstyle f},
{\ar@{-}@`{(-.3,1), (-2,.7)} (0,.45);(-1.5,2.2)*+{\color{black}\scriptstyle X}},
(0,0)="o";(-1.4,.7)**\dir{}, "o",
{\ellipse(1.3,1.5)^,=:a(340){-}},
(-1.8,-.5)*+{\scriptstyle Y},
\endxy} \qquad\text{ where}\quad
\s_{X,Y} = \vcenter{\xy 0;/r.4pc/:
{\ar@{-}@`{(0,-2.5), (3,-3)} (0,0)*+{\color{black}\scriptstyle X}; (3,-5)},
{\ar@{-}@`{(3,-2.5),(0,-3)}|(0.5){\hole} (3,0)*+{\color{black}\scriptstyle Y}; (0,-5)},
\endxy}.
\end{equation}
This leads us to the following definition of $K(\BB)$-action on $\BB(X,\1)$.
\begin{defn}\label{def:action} Let $\BB$ be a spherical fusion category. 
    For $\bX = (X, \s_{X,-}) \in Z(\BB)$ and $Y \in \BB$, we define the (left) action of $Y$ on $\BB(X, \1)$ as
    $$
    Y \cdot f := E_{\bX, Y}^{(0,-1)}(f) \quad \text{for }f \in \BB(X, \1)\,.
    $$
    In particular, $\Tr(Y, V_\bX) = \nu_{(0,-1)}^{\bX}(Y)$ by definition, where $V_\bX = \BB(X, \1)$. This action of objects in $\BB$ on $V_\bX$ can be extended $\BZ$-linearly to an action of $K_0(\BB)$, and we will simply call this $K_0(\BB)$-action on $V_{\bX}$ as \emph{the fusion action of} $\BB$. We may simply write $Yf$ for $Y\cdot f$ when there is not any ambiquity.
\end{defn}

\begin{rmk}{\rm 
  It follows from the proof of \cite[Prop. 2.8]{NS10} that $E_{\bX, Y}^{(0,k)} = E_{\bX, Y^{\o (-k)}}^{(0,-1)}$ for any integer $k$. In particular, $Y^*\cdot f = E_{\bX, Y}^{(0,1)} (f)$, which defines a right action of $Y$ on $\BB(\bX, \1)$ by the following proposition.}   
\end{rmk}

We first show that $\BB(X,\1)$ is a left $K_0(\BB)$-module with such fusion action of $\BB$.

\begin{prop} \label{p:action} Let $\BB$ be a spherical fusion category.
    The fusion action of $\BB$ on $\BB(X,\1)$ introduced in \ref{def:action} defines an $K_0(\BB)$-module. In particular, its character value at $Y$ is the generalized Frobenius-Schur indicator $\nu_{(0,-1)}^{\bX}(Y)$. 
\end{prop}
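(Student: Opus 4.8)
The plan is to show that the assignment $Y \mapsto E_{\bX, Y}^{(0,-1)}$ extends to a unital ring homomorphism $K_0(\BB) \to \End_\BC(\BB(X,\1))$, which is exactly a left $K_0(\BB)$-module structure; the character statement is then immediate from Definition~\ref{def:action}. Since $K_0(\BB)$ is the $\BZ_+$-based ring on $\irr(\BB)$, it suffices to verify three things: that the unit object acts as the identity, that $Y \mapsto E_{\bX, Y}^{(0,-1)}$ is additive in $Y$ (so that the operator is well defined on $K_0(\BB)$), and that it is multiplicative in the sense $E_{\bX, Y_1 \o Y_2}^{(0,-1)} = E_{\bX, Y_1}^{(0,-1)} \circ E_{\bX, Y_2}^{(0,-1)}$.

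The first two points are routine. For the unit, $\coev_\1$ and $\ev'_\1$ are identities and $\s_{X, \1} = \id_X$ because $\bX \in Z(\BB)$, so $E_{\bX, \1}^{(0,-1)} = \id$. For additivity, a decomposition $Y = Y' \oplus Y''$ splits $\coev_Y$, $\ev'_Y$ and $\s_{X, Y}$ compatibly through the canonical inclusions, and the mixed terms pairing $Y'$ with $(Y'')^*$ (and vice versa) vanish under the evaluation; hence $E_{\bX, Y}^{(0,-1)} = E_{\bX, Y'}^{(0,-1)} + E_{\bX, Y''}^{(0,-1)}$. Combined with multiplicativity, these identities show that $E_{\bX, -}^{(0,-1)}$ respects the defining relations $Y_1 \o Y_2 = \bigoplus_Z N_{Y_1 Y_2}^Z Z$ of $K_0(\BB)$.

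The heart of the argument is the multiplicativity, which I would carry out by graphical calculus in the strict pivotal presentation of $E_{\bX, Y}^{(0,-1)}$ given in \eqref{eq: indicator_diag}. Writing $Z = Y_1 \o Y_2$ and $Z^* = Y_2^* \o Y_1^*$, the three structural ingredients are the hexagon axiom for the half-braiding of $\bX$, namely $\s_{X, Y_1 \o Y_2} = (Y_1 \o \s_{X, Y_2}) \circ (\s_{X, Y_1} \o Y_2)$, together with the standard duality factorizations $\coev_{Z} = (Y_1 \o \coev_{Y_2} \o Y_1^*) \circ \coev_{Y_1}$ and $\ev'_{Z} = \ev'_{Y_1} \circ (Y_1 \o \ev'_{Y_2} \o Y_1^*)$. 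Substituting these into the composite defining $E_{\bX, Z}^{(0,-1)}(f)$ splits the single $Z$-loop into nested loops, an outer $Y_1$-loop and an inner $Y_2$-loop, and threads the $X$-strand first past $Y_1$ and then past $Y_2$ via the hexagon. Tracing the $X$-strand through the resulting diagram identifies it with $E_{\bX, Y_1}^{(0,-1)}\!\left(E_{\bX, Y_2}^{(0,-1)}(f)\right)$, so that $(Y_1 \o Y_2)\cdot f = Y_1 \cdot (Y_2 \cdot f)$.

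I expect the main obstacle to be keeping the nesting and the order of the two loops consistent throughout the diagram chase: one must check that the hexagon decomposition of $\s_{X,Z}$ threads $X$ past $Y_1$ before $Y_2$ while the duality factorizations place the $Y_2$-loop inside the $Y_1$-loop, so that the composite reads as the left action of $Y_1$ applied after that of $Y_2$, and not as an anti-homomorphism. Once multiplicativity is in place, the character claim follows directly, since by definition $\Tr(Y, V_\bX) = \Tr\!\left(E_{\bX, Y}^{(0,-1)}\right) = \nu_{(0,-1)}^{\bX}(Y)$.
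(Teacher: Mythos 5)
Your proposal is correct and follows essentially the same route as the paper: the paper's proof is exactly the graphical computation $(V\o W)\cdot f = V\cdot(W\cdot f)$ via nested loops, which is precisely your combination of the hexagon axiom for $\s_{X,-}$ with the duality factorizations of $\coev$ and $\ev'$ for a tensor product, together with the trivial unit case. Your explicit attention to additivity (well-definedness on $K_0(\BB)$) and to the homomorphism-versus-anti-homomorphism ordering is left implicit in the paper but is the right thing to check.
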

\begin{proof}
     Without loss of generality, we may assume $\BB$ is strict pivotal. For any $\bX \in Z(\BB)$ and $f \in \BB(X,\1)$, we obviously have $\1 \cdot f = f$. For any objects $V, W \in \BB$, 
    $$
    (V \o W)\cdot f=    
\vcenter{\xy 0;/r1pc/:
(0,0)*=<10pt,10pt>[F]{\scriptstyle f},
{\ar@{-}@`{(-.3,1), (-2,.7)} (0,.45);(-1.5,3)*+{\color{black}\scriptstyle X}},
(0,0)="o";(-1.4,.7)**\dir{}, "o",
{\ellipse(1.3,1.5)^,=:a(340){-}},
(-2.3,-.5)*+{\color{black}\scriptstyle V},
(0,0)="o";(-1.4,.7)**\dir{}, "o",
{\ellipse(1.9,2.2)^,=:a(340){-}},
%{\ar@{-}@`{(-.3,2.2),  (1.6, 2),  (2.7, -1), (0, -2.4), (-1.6, -1.9),  (-2.2,0)} (-1.1,1.4);(-1.4, 1.1)},
(-.4,-.9)*+{\color{black}\scriptstyle W},
\endxy} \qquad= V\cdot (W\cdot f)\,.
    $$
    \smallskip\\
    The last assertion follows directly from the definition of generalized Frobenius-Schur indicators.
\end{proof}

Now, for any fusion subcategory $\eD$ of  $\BB$ and $\bX=(X, \s_{X,-})  \in Z(\BB)$, $\BB(X, \1)$ is then an $K(\BB)$-module by Proposition \ref{p:action}, and we define
\begin{equation}
    \BB(X, \1)^\DD :=\{f \in \BB(X, \1)\mid Y f = d(Y) f \text{ for all }Y \in \DD \}
\end{equation}
which is called the $\DD$-\emph{invariant subspace} of $\BB(X, \1)$.

The hom-space $Z(\BB)(\bX, \1)$  can be characterized as the $\BB$-invariant subspace of $\BB(X, \1)$ in the following lemma.
\begin{lem} Let $\BB$ be a spherical fusion category.
 For $\bX = (X,\s_{X,-}) \in Z(\BB)$,
 $$
 Z(\BB)(\bX, \1) = \BB(X, \1)^{\BB}\,.
 $$
\end{lem}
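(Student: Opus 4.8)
The plan is to pin down $Z(\BB)(\bX,\1)$ as an explicit subspace of $\BB(X,\1)$ and then trap it between the invariant subspace and itself by means of an averaging idempotent. I would first record that $f\in\BB(X,\1)$ lies in $Z(\BB)(\bX,\1)$ exactly when it intertwines $\s_{X,-}$ with the trivial half-braiding of $\1$; under the identifications $\1\o Y=Y=Y\o\1$ this is the centrality condition
\[
(\id_Y\o f)\circ \s_{X,Y}=f\o\id_Y \qquad\text{in }\BB(X\o Y,Y)\text{ for every }Y\in\BB.
\]
For the inclusion $Z(\BB)(\bX,\1)\subseteq\BB(X,\1)^\BB$ I would feed this condition into the graphical description \eqref{eq: indicator_diag} of $Y\cdot f=E_{\bX,Y}^{(0,-1)}(f)$: the portion of the diagram computing $(\id_Y\o f)\circ\s_{X,Y}$ is replaced by $f\o\id_Y$, which unlinks $f$ from the encircling $Y$ and leaves a free $Y$-loop. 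That closed loop evaluates to $\tr(\id_Y)=d(Y)$, so $Y\cdot f=d(Y)f$ and hence $f\in\BB(X,\1)^\BB$.

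For the reverse inclusion I would introduce the operator $P:=\frac{1}{\dim\BB}\sum_{Y\in\irr(\BB)}d(Y)\,(Y\cdot-)$ on $V_\bX=\BB(X,\1)$, i.e.\ the fusion action of $e:=\frac1{\dim\BB}\sum_Y d(Y)Y\in K(\BB)$. Using Proposition~\ref{p:action} (so that $Z\cdot(Y\cdot f)=(Z\o Y)\cdot f=\sum_W N_{ZY}^W\,(W\cdot f)$) together with the elementary identity $\sum_Y d(Y)N_{ZY}^W=d(Z^*\o W)=d(Z)d(W)$, a direct computation gives $Z\cdot P(f)=d(Z)\,P(f)$ for all $Z$; and $\sum_Y d(Y)^2=\dim\BB$ gives $P|_{\BB(X,\1)^\BB}=\id$. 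Hence $P$ is an idempotent with $\im(P)=\BB(X,\1)^\BB$, and I emphasize that this step uses no commutativity of $K(\BB)$.

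It then remains to prove $\im(P)\subseteq Z(\BB)(\bX,\1)$, that is, that $P(f)$ satisfies the centrality condition for \emph{every} $f$; this is the main obstacle. I would verify $(\id_Z\o P(f))\circ\s_{X,Z}=P(f)\o\id_Z$ by a handle slide: attach the strand $Z$ and the half-braiding $\s_{X,Z}$ to the summed loop defining $P(f)$ and slide $Z$ through it, using the hexagon and naturality of $\s_{X,-}$. The crucial point --- valid after averaging but false for a single $Y$ --- is that the dimension-weighted sum $\frac1{\dim\BB}\sum_Y d(Y)(Y\text{-loop})$ is transparent to the passing strand $Z$. Equivalently, via the two-sided adjunction between the forgetful functor $F\colon Z(\BB)\to\BB$ and induction $I\colon\BB\to Z(\BB)$ one has $V_\bX\cong Z(\BB)(\bX,I(\1))$, and since $\1$ is simple in $Z(\BB)$ and occurs in $I(\1)$ with multiplicity one, $P$ is identified with post-composition by the idempotent of $I(\1)$ onto its unit summand $\1$, whose image is exactly $Z(\BB)(\bX,\1)$.

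Combining the three steps yields
\[
Z(\BB)(\bX,\1)\ \subseteq\ \BB(X,\1)^\BB\ =\ \im(P)\ \subseteq\ Z(\BB)(\bX,\1),
\]
forcing all three to coincide. The only genuinely nontrivial ingredient is the transparency/sliding identity of the third step; everything else is bookkeeping with the fusion action of Definition~\ref{def:action} and the definition of the center.
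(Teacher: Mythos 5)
Your first inclusion and your analysis of the averaging operator $P=\frac{1}{\dim\BB}\sum_{Y\in\irr(\BB)}d(Y)\,(Y\cdot-)$ are both correct: the identity $\sum_Y d(Y)N_{ZY}^W=d(Z)d(W)$ does give $Z\cdot P(f)=d(Z)P(f)$, and together with $\sum_Y d(Y)^2=\dim\BB$ this shows $P$ is an idempotent with $\im(P)=\BB(X,\1)^\BB$. The problem is your third step. Once $\im(P)=\BB(X,\1)^\BB$ is established, the inclusion $\im(P)\subseteq Z(\BB)(\bX,\1)$ is \emph{literally} the inclusion $\BB(X,\1)^\BB\subseteq Z(\BB)(\bX,\1)$ that the lemma asks for; the averaging has relocated the difficulty, not reduced it. The ``transparency/handle-slide'' you invoke to discharge it is a true statement, but it is not bookkeeping: $\BB$ carries no global braiding to slide $Z$ through, and the correct formulation is that the canonical Lagrangian algebra $I(\1)\cong\bigoplus_Y Y\o Y^*$ has a specific half-braiding under which the dimension-weighted loop realizes the unit projection --- a fact whose proof carries essentially the same weight as the lemma itself. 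Your adjunction variant has the same issue, and within this paper it would be circular: the identification of $P$ with the projection of $I(\1)$ onto its unit summand is exactly the content of Corollary \ref{c1}, which is \emph{deduced from} Theorem \ref{t1}, whose proof uses the present lemma. As written, the decisive inclusion is asserted rather than proved.

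The paper takes a different and more direct route for the converse: starting from $Yf=d(Y)f$, it pairs against $g\in\BB(\1,X)$ via the nondegenerate trace form, notes that the resulting equality of traces upgrades to an equality of endomorphisms of a simple $Y$ (where $\End(Y)$ is one-dimensional), and then strips $g$ to land on the centrality condition $(\id_Y\o f)\circ\s_{X,Y}=f\o\id_Y$, i.e.\ $f\in Z(\BB)(\bX,\1)$. If you prefer to keep your averaging framework, the cleanest repair avoiding the sliding lemma is a trace count in the spirit of the paper's proof of Theorem \ref{t1}(ii): $\dim\im(P)=\Tr(P)=\frac{1}{\dim\BB}\sum_Y d(Y)\nu^{\bX}_{(0,-1)}(Y)$, which the $\SLZ$-equivariance \eqref{eq:equivariance} together with \eqref{eq:ind(1,0)} identifies with $[\bX,\1]_{Z(\BB)}$; combined with your first inclusion this forces $\BB(X,\1)^\BB=Z(\BB)(\bX,\1)$. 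Either way, some genuine argument must replace the sentence ``the summed loop is transparent to the passing strand.''
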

\begin{proof}

For $f \in Z(\BB)(\bX, \1)$ and $Y\in \BB$, we have
$$
Y\cdot f = {
\vcenter{\xy 0;/r1pc/:
(0,0)*=<10pt,10pt>[F]{\scriptstyle f},
{\ar@{-}@`{(-.3,1), (-2,.7)} (0,.45);(-1.5,2.2)*+{\color{black}\scriptstyle X}},
(0,0)="o";(-1.4,.7)**\dir{}, "o",
{\ellipse(1.3,1.5)^,=:a(340){-}},
(-1.8,-.5)*+{\color{black}\scriptstyle Y},
\endxy}
}\qquad  =\quad
{
\vcenter{\xy 0;/r1pc/:
(-2,.5)*=<10pt,10pt>[F]{\scriptstyle f},
{\ar@{-}@`{(-2,1)} (-2,.95);(-2,2.7)*+{\color{black}\scriptstyle X}},
(0,0)="o";(-1.4,.7)**\dir{}, "o",
{\ellipse(1.3,1.5)^,=:a(360){-}},
(-1.6,-.9)*+{\color{black}\scriptstyle Y},
\endxy}}\qquad = d(Y) f\,.
$$
Therefore, $Z(\BB)(\bX, \1) \subseteq \BB(X, \1)^{\BB}$. 
Conversely, if $f \in\BB(X, \1)^{\BB}$, then  $Yf = d(Y) f$ for all $Y \in \irr(\BB)$. Consider the nondegenerate bilinear form $\langle g, Yf\rangle = d(Y) \langle g, f\rangle$ for $g \in \BB(\1, X)$. This means
$$
{
\vcenter{\xy 0;/r1pc/:
(0,-.5)*=<10pt,10pt>[F]{\scriptstyle f},
(-1.5,2.4)*=<10pt,10pt>[F]{\scriptstyle g},
{\ar@{-}@`{(-.3,1), (-1.5,.7)} (0,-.1);(-1.5,2)},
{\ar@{-}@`{(0,1), (-2,.7)}|(.5){\hole} (0,1.8);(-1.5,-.7)},
(-2,-.5)*+{\color{black}\scriptstyle Y},
\endxy}} \quad =\quad 
{
\vcenter{\xy 0;/r1pc/:
(0,-.5)*=<10pt,10pt>[F]{\scriptstyle f},
(0,2)*=<10pt,10pt>[F]{\scriptstyle g},
{\ar@{-} (0,-.1);(0,1.6)},
{\ar@{-} (1.5,-.8);(1.5,2)},
(2,-.5)*+{\color{black}\scriptstyle Y},
\endxy}} \quad \text{and hence }\quad
{
\vcenter{\xy 0;/r1pc/:
(0,-.5)*=<10pt,10pt>[F]{\scriptstyle f},
{\ar@{-}@`{(-.3,1), (-1.5,.7)} (0,-.1);(-1.5,2)},
{\ar@{-}@`{(0,1), (-2,.7)}|(.5){\hole} (0,1.8);(-1.5,-.7)},
(-2,-.5)*+{\color{black}\scriptstyle Y},
\endxy}} \quad =\quad 
{
\vcenter{\xy 0;/r1pc/:
(0,-.5)*=<10pt,10pt>[F]{\scriptstyle f},
{\ar@{-} (0,-.1);(0,2)},
{\ar@{-} (1.5,-.8);(1.5,2)},
(2,-.5)*+{\color{black}\scriptstyle Y},
\endxy}} \,,
$$
or $f \in Z(\BB)(\bX,\1)$. This proves the  lemma.
\end{proof}
\subsection{Fusion actions on condensable subalgebras} Now, we consider a condensable algebra $A$ in any modular tensor category $\CC$. Since $\CA$ is a spherical fusion category and $\a^+(x) \in Z(\CA)$ for any $x \in \CC$,  we have the fusion action of $\CA$ on $\CA(\a(x), \1)$ by the preceding subsection. We will define the fusion action of $\CA$ on the algebra $A$ in Definition \ref{def:rho}.

The notion of the objects $Y \in \CA$ acting on $A$ is equivalent to an algebra homomorphism $K(\CA) \to \CC(A,A)$. Since $\CC$ is semisimple, compositions of morphisms provide the decomposition
$$
\CC(A,A) = \bigoplus_{x \in \iC} \CC(x, A) \o \CC(A, x) \,.
$$
We proceed to define an action of $K_0(\CA)$ on $\CC(x, A)$ for $x \in \CC$. The adjunction \eqref{eq:adj} insinuates  an action of $K(\CA)$ on $\CC(x, A)$  which is equivalent the fusion action of $\CA$ on $\CA(\a(x), \1)$. 

Recall that the evaluation $\ol{\ev}_Y: Y^* \oa Y \to A$ and coevaluation $\ol{\coev}_Y: A \to Y \oa Y^*$ morphisms for the left dual of $Y \in \CA$ are determined by the morphisms $\widetilde{\ev}_Y : Y^* \o Y \to A$ and $\widetilde{\coev}_Y: A \to Y \o Y^*$ in $\CC$ given by the diagrams 
$$
\widetilde{\ev}_Y\, =\, \frac{1}{d(A)}
\vcenter{\xy  0;/r1pc/:
(0,0)="o";(-1.4,-.2)**\dir{}, "o",
{\ellipse(1.5,2.1)^,=:a(160){-}},
{\ar@{.}@`{(2.5, 1), (2.3, -1.7)} (1.3,-1.3);(2.5, -2.8)},
(-1.4, -.1)*{\scriptstyle Y^*},
(1.4, -.1)*{\scriptstyle Y},
\endxy}\, \hspace{10pt} = \quad  \frac{1}{d(A)}
\vcenter{\xy  0;/r1pc/:
(0,0)="o";(-1.4,-.2)**\dir{}, "o",
{\ellipse(1.5,2.1)^,=:a(160){-}},
{\ar@{.}@`{(2.5, 0), (2, -1.9)}|(.35){\hole} (-1,-1.6);(2.2, -2.6)},
(-1.4, -.1)*{\scriptstyle Y^*},
(1.4, -.1)*{\scriptstyle Y},
\endxy}
$$
$$
\widetilde{\coev}_Y\, = 
\vcenter{\xy  0;/r1pc/:
(0,0)="o";(-1.4,-.2)**\dir{}, "o",
{\ellipse(1.5,2.1)_,=:a(160){-}},
{\ar@{.}@`{(-1.3, 1), (1.7, 1.8)}|(.63){\hole} (-1.3,1);(1.7, 2.9)},
(-1.4, 0)*{\scriptstyle Y},
(1.6, 0)*{\scriptstyle Y^*},
\endxy} \hspace{10pt} = \quad
\vcenter{\xy  0;/r1pc/:
(0,0)="o";(-1.4,-.2)**\dir{}, "o",
{\ellipse(1.5,2.1)_,=:a(160){-}},
{\ar@{.}@`{(-1.3, 1), (-2, 2)} (-1.3,1);(-2, 2.9)},
(-1.4, 0)*{\scriptstyle Y},
(1.6, 0)*{\scriptstyle Y^*},
\endxy}
$$
and the commutative diagrams
$$
\xymatrix{ Y^* \o Y \ar[r]^-{\widetilde{\ev}}\ar[d]_-{p_A} & A \\
Y^* \oa Y \ar[ru]_-{\ol{\ev}} & 
}\,, \qquad \xymatrix{ A \ar[rd]_-{\ol{\coev}} \ar[r]^-{\widetilde{\coev}} & Y \o Y^* \ar[d]^-{p_A}  \\
& Y \oa Y^* 
}\,.
$$
The adjunction \eqref{eq:adj} and Definition \ref{def:action} inspire the following definition of fusion action of $\CA$ on $\CC(x, A)$ for any $x \in \CC$.  
\begin{defn}\label{def:action2}
For $x \in \CC$, $Y \in \CC_A$ and $g \in \CC(x, A)$,  we define:
\begin{align*}
    Y\cdot g = &  \Big(x \xrightarrow{x \o \coev_Y}x \o Y \o Y^* \xrightarrow{R_{x, Y}\o Y^*} Y \o x \o Y^* \xrightarrow{Y\o g\o Y^*}Y \o A \o Y^* \\
    & \xrightarrow{\mu_Y\o Y^*}  Y \o Y^* =  Y^{**}  \o Y^* \xrightarrow{\widetilde{\ev}_{Y^*} } A\Big)\,.
\end{align*}
In terms of diagram with the omission of ``$\cdot$'', we have
\begin{equation}
    Yg:=
\frac{1}{d(A)}\vcenter{\xy  0;/r1pc/:
(0,0)="o";(-1.4,1)**\dir{}, "o",
{\ellipse(1.5,2.1)^,=:a(340){-}},
(0,.3)*=<10pt,10pt>[F]{\scriptstyle g},
{\ar@{-}@`{(-.1,1.5), (-2,2)} (0,.8);(-1.5,3.3)*+{\scriptstyle x}},
{\ar@{.}@`{(-.3, -1)} (0,-.2);(-1.2, -1.1)},
{\ar@{.}@`{(2.5, 1), (2.3, -1.7)} (1.3,-1.3);(2.5, -2.8)},
\endxy}\,.
\end{equation}
\end{defn}

For $x \in \CC$, $\a^+(x) = (\a(x), \s_{\a(x),-}) \in Z(\CA)$ where the half-braiding $\s_{\a(x), Y}$ is given by
$$
\s_{\a(x), Y} := \left(x \o A \oa Y \xrightarrow{x\o l_Y^{-1}} x \o Y \xrightarrow{R_{x,Y}} Y \o x \xrightarrow{\sim} Y \oa x \o A \right)
$$
for $Y \in \CA$. Now, we can prove $\vp_x$ is an isomorphism of $K(\CA)$-modules.

\begin{prop} \label{p:interwiner}
The natural isomorphism $\vp_x : \CA(\a(x), \1) \to \CC(x,A)$, for any  $x \in \CC$, is an intertwining operator of $K_0(\CA)$-actions on $\CA(\a(x), \1)$ and $\CC(x, A)$  given  respectively by Definitions \ref{def:action} and \ref{def:action2}. In particular, $\CC(x,A)$ is an $K(\CA)$-module.
\end{prop}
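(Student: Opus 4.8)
The plan is to verify directly that $\vp_x$ intertwines the two actions, i.e. that $\vp_x(Y\cdot f) = Y\cdot \vp_x(f)$ for every $Y \in \CA$ and every $f \in \CA(\a(x),\1)$. Since $\vp_x$ is already a linear isomorphism by \eqref{eq:adj}, and the source $\CA(\a(x),\1)$ is a $K_0(\CA)$-module by Proposition~\ref{p:action}, once this identity is established, transport of structure along $\vp_x$ automatically shows that the maps of Definition~\ref{def:action2} satisfy the module axioms, making $\CC(x,A)$ a $K(\CA)$-module with $\vp_x$ an isomorphism of such. As usual I would assume $\CC$, and hence $\CA$, strict pivotal so that graphical calculus applies.

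First I would unwind Definition~\ref{def:action} for the central object $\bX = \a^+(x) = (\a(x),\s_{\a(x),-}) \in Z(\CA)$. Writing $E^{(0,-1)}_{\a^+(x),Y}$ in $\CA$, whose unit is $A=\1$, the morphism $Y\cdot f = E^{(0,-1)}_{\a^+(x),Y}(f):\a(x)\to\1$ is $f$ with a $Y$-loop attached through the half-braiding $\s_{\a(x),Y}$, the loop being closed by the $A$-linear coevaluation $\ol\coev_Y$ and evaluation $\ol\ev$ in $\CA$. I would then apply $\vp_x$, that is, precompose with $x\o u_A$ as in \eqref{eq:vp}, and translate the entire diagram from $\CA$ (morphisms built from $\oa$) into $\CC$ (morphisms built from $\o$).

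Two translations are the heart of the computation. On one hand, the commutative triangles relating $\ol\coev_Y,\ol\ev$ to $\widetilde\coev_Y,\widetilde\ev$ through the cokernel map $p_A$ turn the $A$-linear loop into the $\CC$-loop built from $\widetilde\coev_Y$ and $\widetilde\ev_{Y^*}$; in particular the single factor $\tfrac1{d(A)}$ carried by $\widetilde\ev$ reappears exactly once, matching the normalization in Definition~\ref{def:action2}. On the other hand, the explicit half-braiding $\s_{\a(x),Y}$ is the composite $x\o l_Y$, then $R_{x,Y}$, then the canonical identification $Y\o x \cong Y\oa\a(x)$ via $r_Y^{-1}$; once the $A$-strand of the input $\a(x)=x\o A$ has been created by the unit $u_A$, the isomorphisms $l_Y$ and $r_Y$ collapse, so that $\s_{\a(x),Y}$ contributes precisely the ordinary braiding $R_{x,Y}$ acting on the $x$-strand. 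Carrying out these substitutions, the diagram for $\vp_x(Y\cdot f)$ becomes verbatim the diagram defining $Y\cdot g$ in Definition~\ref{def:action2} with $g=\vp_x(f)$, which finishes the proof.

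The main obstacle is the bookkeeping around the relative tensor product: one must check that after capping the $A$-strand of $\a(x)$ with $u_A$, every occurrence of $\oa$ and every projection $p_A$ unfolds cleanly to the corresponding $\o$ in $\CC$ without introducing spurious $A$-actions, and that the half-braiding degenerates exactly to $R_{x,Y}$. Equivalently, one must verify that the $\CA$-duality loop closing up $Y$ coincides with the $\CC$-duality loop of Definition~\ref{def:action2}, with the $\tfrac1{d(A)}$ accounted for precisely once, by $\widetilde\ev_{Y^*}$. Once this graphical identity is in place, $K_0(\CA)$-linearity of $\vp_x$ is immediate, and the final assertion that $\CC(x,A)$ is a $K(\CA)$-module follows by $\BC$-linear extension.
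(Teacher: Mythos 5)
Your proposal is correct and follows essentially the same route as the paper: both express $\vp_x(Yf)$ as the $\CA$-composition built from $\ol{\coev}$, $\s_{\a(x),Y}$ and $\ol{\ev}$, then push it down through the cokernel maps $p_A$ to identify it with the $\CC$-composition of Definition~\ref{def:action2}, with the half-braiding collapsing to $R_{x,Y}$ and the $\tfrac{1}{d(A)}$ normalization absorbed by $\widetilde{\ev}$. The paper simply records this translation as one explicit commutative diagram, which is exactly the bookkeeping you describe.
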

\begin{proof}
    Without loss of generality, we may assume both $\CC$ and $\CA$ are strict pivotal. For $Y \in \CA$ and $f \in \CA(\a(x), \1)$, 
    \begin{align*}
      \vp_x(Yf) =\Big( x \xrightarrow{x \o u_A} \a(x) = \a(x) \oa A \xrightarrow{x\o \ol{\coev}} \a(x) \oa Y \oa Y^* \xrightarrow{\s_{\a(x),Y}\oa Y^*} \\ Y\oa \a(x) \oa Y^* 
       \xrightarrow{Y \oa f \oa  Y^*} Y \oa A \oa Y^*  = Y^{**} \oa Y^* \xrightarrow{\ol{\ev}} A\Big)\,.
    \end{align*}
    The commutativity of the following diagram
    $$
    \xymatrix{\a(x) \ar@{=}[r] & a(x)\oa A  \ar[r]^-{x\o \ol{\coev}} & \a(x) \oa Y \oa Y^* \ar[r]^-{\s\oa Y^*} & Y\oa \a(x) \oa Y^* \ar[r] & \\
    x\ar[u]^-{x \o u_A} \ar[rr]^-{x \o \coev_Y}&&  x \o Y \o Y^* \ar[u]^-{p_A} \ar[r]^-{R_{x,Y} \o Y^*} & Y \o x \o Y^* \ar[u]_-{p_A} \ar[r]& \ar@{}[u]^-{\text{continue} \atop \text{next line}}
    }
    $$
     $$
    \xymatrix{  \ar[rr]^-{Y \oa f \oa  Y^*} &&  Y \oa A \oa Y^* \ar@{=}[r] &Y \oa Y^* \ar@{=}[r]& Y^{**} \oa Y^* \ar[dr]^-{\ol{\ev}_{Y^*}}\\
    \ar[rr]_-{Y\! \o \vp_x(\!f\!) \o Y\!^*} && Y \o A \o Y^* \ar[u]_-{p_A} 
       \ar[r]_-{\mu_Y \o Y^*} & 
        Y \o Y^*  \ar@{=}[r]\ar[u]^-{p_A} & Y^{**} \o Y^* \ar[u]^-{p_A}\ar[r]_-{\tilde{\ev}} & A
    }
    $$
    % $$
    % \xymatrix{ Y\oa\a(x) \oa Y^* \ar[rr]^-{Y \oa f \oa  Y^*} &&  Y \oa A \oa Y^* \ar@{=}[r] &Y \oa Y^* \ar@{=}[r]& Y^{**} \oa Y^* \ar[d]^-{\ol{\ev}_{Y^*}}\\
    % Y \o x \o Y^* \ar[u]_-{p_A}  \ar[r]_-{Y\! \o \vp_x(\!f\!) \o Y\!^*} & Y \o A \o Y^* \ar[ru]_-{p_A} 
    %    \ar[r]_-{\mu_Y \o Y^*} & 
    %     Y \o Y^*  \ar@{=}[r]\ar[ru]^-{p_A} & Y^{**} \o Y^* \ar[ru]^-{p_A}\ar[r]_-{\tilde{\ev}} & A
    % }
    % $$
    implies $\vp_x(Y f)$ equals to the composition
     \begin{align*}
       x &\xrightarrow{x \o \coev} x \o Y \o Y^* \xrightarrow{R_{x,Y}\o Y^*} Y\o x \o Y^* \xrightarrow{Y \o \vp_x(f) \o Y^*} Y \o A \o Y^* \\ 
       &\xrightarrow{\mu_Y \o Y^*} 
        Y \o Y^*  = Y^{**} \o Y^* \xrightarrow{\tilde{\ev}} A,
    \end{align*}
    which is $Y \vp_x(f)$ by Definition \ref{def:action2}.
\end{proof}
Similarly, for any fusion subcategory $\BB$ of $\CA$ and $x \in \CC$, we define
\begin{equation} \label{eq:inv_subspace}
    \CC(x, A)^\BB = \{ g \in \CC(x,A) \mid  Y g = d_A(Y) g  \text{ for all } Y \in \BB\}
\end{equation}
which is the isotypic component of $\CC(x, A)$ corresponding to the dimension character of $K(\BB)$, and is called the $\BB$-\emph{invariant subspace} of $\CC(x,A)$. The fusion subcategory $\CAo$ can be characterized as follows: 
\begin{prop} \label{p:CAo}
    If $Y$ is a simple $A$-module but $Y \not\in \CAo$, then $\tr(Y\id_A) = 0$,  where $\tr$ is the categorical trace of $\CC$. Moreover, a fusion subcategory $\BB$ of $\CA$ is contained in $\CAo$ if and only if
    \begin{equation}\label{eq:inv0}
        \CC(x, A)^\BB = \CC(x, A) \quad \text{ for all } x \in \CC\,.
    \end{equation}
    In particular, $\CAo$ is the fusion subcategory of $\CA$ satisfying  \eqref{eq:inv0} with the greatest Frobenius-Perron dimension.
\end{prop}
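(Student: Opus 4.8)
The plan is to reduce the fusion action on every hom-space $\CC(x,A)$ to post-composition by one fixed endomorphism of $A$, and then to extract all three assertions from the categorical trace of that endomorphism. \textbf{Step 1 (reduction to post-composition).} First I would rewrite Definition \ref{def:action2}: naturality of the braiding gives $(Y\otimes g)\circ R_{x,Y}=R_{A,Y}\circ(g\otimes Y)$ for any $g\in\CC(x,A)$, so the defining formula becomes $Y\cdot g=\frac1{d(A)}\,\Phi_Y\circ g$, where $\Phi_Y:=\widetilde{\ev}_{Y^*}\circ(\mu_Y\otimes Y^*)\circ(R_{A,Y}\otimes Y^*)\circ(A\otimes\coev_Y)\colon A\to A$ no longer depends on $x$ (note $\mu_Y\circ R_{A,Y}=\mu_Y^-$). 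Hence $Y$ acts on each $W_x=\CC(x,A)$ by post-composition with the single morphism $\rho(Y):=\tfrac1{d(A)}\Phi_Y=Y\cdot\id_A\in\CC(A,A)$, and $\rho\colon K(\CA)\to\CC(A,A)$ is an algebra map. Writing $A=\bigoplus_{x\in\iC}W_x\otimes x$ identifies $\CC(A,A)=\bigoplus_x\End_\BC(W_x)$ and shows $\rho(Y)$ is block-diagonal, so $\tr(Y\id_A)=\sum_{x\in\iC}d(x)\,\Tr(\rho_x(Y))$, which by Propositions \ref{p:action} and \ref{p:interwiner} equals $\sum_{x\in\iC}d(x)\,\nu^{\a^+(x)}_{(0,-1)}(Y)$. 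In the same breath, \eqref{eq:inv0} holds for $\BB$ precisely when $\rho(Y)=d_A(Y)\id_A$ for every $Y\in\BB$.

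\textbf{Step 2 (the vanishing, the hard part).} To evaluate $\sum_{x}d(x)\,\nu^{\a^+(x)}_{(0,-1)}(Y)$ I would invoke the $SL_2(\BZ)$-equivariance of the generalized indicators of the modular category $Z(\CA)$ (foreshadowed above, and established in \cite{NS10}) to pass from $\nu_{(0,-1)}$ to $\nu_{(1,0)}$: using $\nu^{\bW}_{(1,0)}(Y)=[W,Y]_\CA$ from \eqref{eq:(n,0)} and the modular $S$-matrix $\shat$ of $Z(\CA)$, the sum becomes $\sum_{\bW}\big(\sum_x d(x)\,\shat_{\a^+(x),\bW}\big)[W,Y]_\CA$ up to a nonzero normalizing scalar. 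The inner coefficient is, up to normalization, the pairing of the ``regular element'' $\sum_x d(x)\,\a^+(x)$ of the modular subcategory $\a^+(\CC)\subseteq Z(\CA)$ with $\bW$, which vanishes unless $\bW$ lies in the M\"uger centralizer of $\a^+(\CC)$. Since $Z(\CA)\simeq\CC\boxtimes(\CAo)^{\rev}$ with $\a^+(\CC)=\CC\boxtimes\1$ modular, that centralizer is exactly $\b(\CAo)=\1\boxtimes(\CAo)^{\rev}$. Therefore $\tr(Y\id_A)=c\sum_{W\in\irr(\CAo)}d_A(W)\,[W,Y]_\CA$ for some nonzero $c$; if $Y$ is a simple $A$-module with $Y\notin\CAo$, then $[W,Y]_\CA=0$ for every local simple $W$, so $\tr(Y\id_A)=0$. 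Carrying out the $SL_2(\BZ)$ bookkeeping with correct phases, together with the centralizer/orthogonality computation, is the main technical obstacle.

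\textbf{Step 3 (local modules act by their dimension).} For the implication $\BB\subseteq\CAo\Rightarrow$\eqref{eq:inv0}, I would show each $Y\in\CAo$ acts as the scalar $d_A(Y)$. As recalled in the excerpt, $\b(Y)=(Y,\s_{Y,-})$ centralizes $\a^+(x)$ in $Z(\CA)$, so the double braiding $\s_{Y,\a(x)}\circ\s_{\a(x),Y}$ is the identity. The operator $E^{(0,-1)}_{\a^+(x),Y}$ contains exactly one half-braiding crossing $\s_{\a(x),Y}$; rewriting it through $\s_{Y,\a(x)}^{-1}$ and using naturality of the central half-braiding $\s_{Y,-}$ against $f\in\CA(\a(x),\1)$ (with $\s_{Y,\1}=\id$) straightens that crossing once $f$ is applied, detaching the $Y$-loop with value $d_A(Y)$. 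Hence $\rho_x(Y)=d_A(Y)\id_{W_x}$ for all $x$, which is precisely \eqref{eq:inv0}.

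\textbf{Steps 4--5 (equivalence and maximality).} For the converse, suppose \eqref{eq:inv0} holds; then every simple $Y\in\BB$ acts as $d_A(Y)\id_A$, so $\tr(Y\id_A)=d_A(Y)\,d(A)=d(Y)\neq0$. By Step 2 a non-local simple $Y$ would force $\tr(Y\id_A)=0$, a contradiction, so every simple object of $\BB$ is local, i.e.\ $\BB\subseteq\CAo$; this proves the equivalence. Finally, Step 3 shows $\CAo$ itself satisfies \eqref{eq:inv0}, while the equivalence shows every fusion subcategory $\BB$ satisfying \eqref{eq:inv0} is contained in $\CAo$; hence $\CAo$ is the largest fusion subcategory with this property, and in particular the one of greatest Frobenius--Perron dimension.
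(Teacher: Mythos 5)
Your proof is correct, but it reaches the key vanishing statement by a genuinely different route from the paper. The paper proves both halves by direct graphical calculus inside $\CC$: for $Y \in \CAo$ it slides the $A$-strand through the $Y$-loop using locality to detach a $d_A(Y)$-circle, and for simple $Y \notin \CAo$ it manipulates the closed diagram for $\tr(Y\id_A)$ until it becomes the ``killing ring'' configuration of \cite[Lem.~4.3]{KO02}, which vanishes. You instead observe that $\tr(Y\id_A)=\sum_{x}d(x)\,\nu^{\a^+(x)}_{(0,-1)}(Y)$ and evaluate this sum by the $\SL_2(\BZ)$-equivariance of generalized indicators together with M\"uger's orthogonality for the modular subcategory $\a^+(\CC)\subseteq Z(\CA)$ and its centralizer $\b(\CAo)$ --- precisely the machinery the paper defers to the proof of Theorem~\ref{t1}(ii). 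Both arguments lean on one external input of comparable depth (\cite[Lem.~4.3]{KO02} versus the centralizer orthogonality relation), and there is no circularity in yours since Theorem~\ref{t1} itself is not invoked. What the paper's route buys is a short, self-contained diagrammatic computation; what yours buys is the exact closed formula $\tr(Y\id_A)=d(A)\sum_{W\in\irr(\CAo)}d_A(W)[W,Y]_{\CA}$ for arbitrary $Y\in\CA$, which exposes the proposition as an instance of the factorization $Z(\CA)\simeq\CC\boxtimes(\CAo)^{\rev}$. Your Step~3 (local objects act by $d_A(Y)$ via triviality of the double braiding and naturality of $\s_{Y,-}$ against $f$) is a clean repackaging of the paper's diagram manipulation, and your Steps~4--5 match the paper's converse argument ($\tr(Y\id_A)=d(Y)\neq 0$ for simple $Y$ acting by its dimension, contradicting the vanishing for non-local $Y$). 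One presentational caveat: the constant $c$ in your Step~2 should be identified as $\dim(\CC)/\dim(\CC_A)=d(A)$ if you want the formula to be consistent with the local case, but its precise value is irrelevant to the vanishing argument, as you note.
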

\begin{proof}
For $x \in \CC$ and $g \in \CC(x,A)$, we find
$$
d(A)\cdot Yg = \vcenter{\xy  0;/r1pc/:
(0,0)="o";(-1.4,1)**\dir{}, "o",
{\ellipse(1.5,2.2)^,=:a(350){-}},
(0,.3)*=<10pt,10pt>[F]{\scriptstyle g},
{\ar@{-}@`{(-.1,1.5), (-2,2)} (0,.8);(-1.5,3.3)*+{\scriptstyle x}},
{\ar@{.}@`{(-.3, -1)} (0,-.2);(-1.2, -1.1)},
{\ar@{.}@`{(2.5, 1), (2.3, -1.7)} (1.3,-1.3);(2.5, -2.8)},
(-1.1,-.1)*+{\scriptstyle Y},
\endxy}=
 \vcenter{\xy  0;/r.9pc/:
(0,0)="o";(-1.4,1)**\dir{}, "o",
{\ellipse(1.5,2.2)^,=:a(350){-}},
(-1.5,2.8)*=<10pt,10pt>[F]{\scriptstyle g}="g",
%{\ar@{-}@`{(-.1,1.5), (-2,2)} (0,.4);(-1.5,3.3)*+{\scriptstyle x}},
{\ar@{.}@`{(-1.4,1.5), (1.3,1.2)} (-1.5,2.3);(-1.2, -1.1)},
{\ar@{.}@`{(2.5, 1), (2.3, -1.7)} (1.3,-1.3);(2.5, -2.8)},
(-1.1,0)*+{\scriptstyle Y},
(-1.5,3.2); (-1.5, 4.1)*+{\scriptstyle x} **\dir{-},
\endxy}=
\vcenter{\xy 0;/r.9pc/: (0,0)="o";(1,1.4)**\dir{}, "o",
{\ellipse(1.5,2.2)^,=:a(350){-}},
{\ar@{.}@`{(2.5, 1), (2.3, -1.7)} (1.3,-1.3);(2.4, -2.8)},
{\ar@{.}@`{(.9,1.5)} (.9,2.5);(-1.2, -1.)},
(.9,3)*=<10pt,10pt>[F]{\scriptstyle g}="g",
(-1.1,0)*+{\scriptstyle Y},
(.9,3.5); (.9, 4.3)*+{\scriptstyle x} **\dir{-},
\endxy}
=
\vcenter{\xy 0;/r.9pc/: (0,0)="o";(1,1.4)**\dir{}, "o",
{\ellipse(1.5,2.2)^,=:a(350){-}},
{\ar@{.}@`{(2.5, 1), (2.3, -1.7)} (1.3,-1.3);(2.5, -2.8)},
{\ar@{.}@`{(.9,2), (-.3,1.1)} (1.2,2.5);(1.4, -.1)},
(1.2,3)*=<10pt,10pt>[F]{\scriptstyle g}="g",
(-1,-.4)*+{\scriptstyle Y},
(1.2,3.5); (1.2, 4.3)*+{\scriptstyle x} **\dir{-},
\endxy}
=
\vcenter{\xy 0;/r.9pc/: (0,0)="o";(1,1.4)**\dir{}, "o",
{\ellipse(1.5,2.2)^,=:a(350){-}},
{\ar@{.}@`{(2.5, 1), (2.3, -1.7)} (1.3,-1.3);(2.4, -2.8)},
(1.2,3)*=<10pt,10pt>[F]{\scriptstyle g}="g",
(-1,-.4)*+{\scriptstyle Y},
(1.2,3.5); (1.2, 4.3)*+{\scriptstyle x} **\dir{-},
{\ar@{.}@`{(.9,2), (-.3,1.1)} (1.2,2.5);(1.1, 1)},
{\ar@{.}@`{ (2.6,.6)} (1.5, 1);(1.5,-.1);},
\endxy}\,.
$$
Note that $Y \in \CAo$ if and only if  $Y^* \in \CAo$. Thus, if  $Y \in \CAo$
$$
\vcenter{\xy 0;/r.9pc/: (0,0)="o";(1,1.4)**\dir{}, "o",
{\ellipse(1.5,2.2)^,=:a(350){-}},
{\ar@{.}@`{(2.5, 1), (2.3, -1.7)} (1.3,-1.3);(2.4, -2.8)},
(1.2,3)*=<10pt,10pt>[F]{\scriptstyle g}="g",
(-1.1,-.4)*+{\scriptstyle Y},
(1.2,3.5); (1.2, 4.3)*+{\scriptstyle x} **\dir{-},
{\ar@{.}@`{(.9,2), (-.3,1.1)} (1.2,2.5);(1.1, 1)},
{\ar@{.}@`{ (2.6,.6)} (1.5, 1);(1.5,-.1);},
\endxy} =
\vcenter{\xy 0;/r.9pc/: 
{\ellipse(1.4,2.1){}},
(2.7,3)*=<10pt,10pt>[F]{\scriptstyle g}="g",
{\ar@{.}@`{(2.7,1.5)}, (2.7,2.5) ;(2.3, 0.5)},
{\ar@{.}@`{(3.5, 1), (3.3, -1.7)} (2.2,-1.3);(3.4, -2.8)},
"g"; "g"+(0, 1.5)*+{\scriptstyle x} **\dir{-},
(0,-.4)*+{\scriptstyle Y},
\endxy}
 =
\vcenter{\xy 0;/r.9pc/: 
{\ellipse(1.4,2.1){}},
(2.7,3)*=<10pt,10pt>[F]{\scriptstyle g}="g",
{\ar@{.}@`{(2.7,1.5)}, (2.7,2.5) ;(2.35, 0.5)}, (.4,2.6)="p",
{\ar@{.}@`{"p"+(3.5, 1), (3.3, -1.7)} "p"+(2.2,-1.3);(3.4, -2.8)},
"g"; "g"+(0, 1.5)*+{\scriptstyle x} **\dir{-},
(0,-.4)*+{\scriptstyle Y},
\endxy} = \, d(Y) \,\,
\vcenter{\xy 0;/r.9pc/: 
(0,0)*=<10pt,10pt>[F]{\scriptstyle g}="g",
"g"; "g"+(0, 2)*+{\scriptstyle x} **\dir{-},
"g"; "g"+(0, -2.5) **\dir{.},
\endxy}\,,
$$
and so $Yg = d_A(Y)g$. Thus, $\CC(x, A)^{\CAo} = \CC(x, A)$ for all $x \in \CC$, and so any fusion subcategory $\BB$ of $\CAo$ satisfies $\eqref{eq:inv0}$.

On the other hand, if $Y \in \iCA$ but $Y \not\in \CAo$, we consider the action of $Y$ on $\id_A \in \CC(A,A)$. Then
$$
\tr(Y\id_A) = \frac{1}{d(A)}\tr\left(\vcenter{\xy 0;/r.9pc/: (0,0)="o";(1,1.4)**\dir{}, "o",
{\ellipse(1.5,2.2)^,=:a(350){-}},
{\ar@{.}@`{(2.5, 1), (2.3, -1.7)} (1.3,-1.3);(2.4, -2.8)},
(-1,-.4)*+{\scriptstyle Y},
{\ar@{.}@`{(1.4, 2.7)}(1.2,2.5); (1.2, 3.5)},
{\ar@{.}@`{(.9,2), (-.3,1.1)} (1.2,2.5);(1.1, 1)},
{\ar@{.}@`{ (2.6,.6)} (1.5, 1);(1.5,-.1);},
\endxy} \right) =
\frac{1}{d(A)}\vcenter{\xy 0;/r.9pc/: (0,0)="o";(1,1.4)**\dir{}, "o",
{\ellipse(1.5,2.2)^,=:a(350){-}},
(-1,-.4)*+{\scriptstyle Y},
{\ar@{.}@`{(.9,2), (-.3,1.1)} (1.2,2.2);(1.1, 1)},
{\ar@{.}@`{ (2.6,.6)} (1.5, 1);(1.5,-.1);},
{\ar@{.}@`{(3, -.5), (3, 2.5)} (1.6,-.7);(1.2,2.2)},
\endxy} = 0\,,
$$
where the last equality follows from \cite[Lem. 4.3]{KO02}, and so $Y\id_A \ne d_A(Y) \id_A$.
In particular, if $\BB$ is not a fusion subcategory of $\CAo$, then $\CC(A,A)^\BB \ne \CC(A,A)$. 
\end{proof}
Now, we define the action of $\CA$ on the algebra $A$. 
\begin{defn} \label{def:rho}Let $\CC$ be any MTC and $A \in \CC$ a condensable algebra. We define the $\BC$-linear map $\rho: K(\CA) \to \CC(A,A)$ by
    $$
    \rho(Y) = Y \id_A = \frac{1}{d(A)}
\vcenter{\xy 0;/r.9pc/: (0,0)="o";(1,1.5)**\dir{}, "o",
{\ellipse(1.5,2.2)^,=:a(345){-}},
{\ar@{.}@`{(2.5, 1), (2.3, -1.7)} (1.3,-1.3);(2.4, -2.8)},
{\ar@{.}@`{(.9,1.5)} (1.3, 3);(-1.4, -.3)},
(-1.4,1.5)*+{\scriptstyle Y},
\endxy}\quad \text{for } Y \in \iCA\,.
$$
\end{defn} 
The next lemma proves that $\rho$ defines an action of $K(\CC_A)$ on $A$, and so $\rho$ will be referred as the fusion action of $\CA$ on $A$. We would like to highlight that an analog of Lemma \ref{l:rho_dual} was established in Theorem 5 and in the proof Theorem 6 of \cite{Rie2025}.
\begin{lem}\label{l:rho_dual}
   The linear map $\rho$ is a $\BC$-algebra homomorphism that preserves duality, i.e. $\rho(Y^*) = \rho(Y)^*$ for all $Y \in \CA$.
\end{lem}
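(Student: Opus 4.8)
The plan is to prove two separate assertions about $\rho$: that it is an algebra homomorphism, and that it preserves duality. Since the action of $K(\CA)$ on $\CC(A,A)$ was already shown to make $\CC(A,A)$ a $K(\CA)$-module (via Proposition \ref{p:interwiner} and the decomposition $\CC(A,A) = \bigoplus_x \CC(x,A)\o\CC(A,x)$), the map $\rho(Y) = Y\,\id_A$ is automatically additive and respects the $K(\CA)$-module structure. What remains for the homomorphism property is multiplicativity: I would verify that $\rho(Y_1 \o_A Y_2) = \rho(Y_1)\circ\rho(Y_2)$. By Proposition \ref{p:action}, the fusion action already satisfies $(V\o W)\cdot f = V\cdot(W\cdot f)$ on the spaces $\CA(X,\1)$, and $\vp_x$ is an intertwiner; so the module axiom transports to $\CC(x,A)$ and hence to $\CC(A,A)$. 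The one extra point is that composition in $\End_\CC(A)$ must be shown to agree with the module multiplication, i.e. $(Y_1\cdot g)$ followed by the appropriate identification corresponds to applying $Y_1$ to $Y_2\cdot\id_A$. I would establish this by a direct diagrammatic computation, nesting the two ``$Y$-loops'' from Definition \ref{def:action2} and using the Frobenius property of $\widetilde{\ev}$ together with the half-braiding naturality to slide the inner loop through $g = Y_2\,\id_A$.

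For duality preservation, $\rho(Y^*) = \rho(Y)^*$, the approach is again diagrammatic. The adjoint $\rho(Y)^*$ in the spherical category $\CC$ is obtained by reflecting the diagram for $\rho(Y) = \frac{1}{d(A)}(\text{$Y$-loop around }\id_A)$ and using the nondegenerate pairing $\langle f,g\rangle = \tr(fg)$. I would write out the diagram for $\rho(Y)^*$, apply the defining relation between the evaluation/coevaluation for $Y$ and those for $Y^*$ (namely $\coev'_Y = \coev_{Y^*}$, $\ev'_Y = \ev_{Y^*}$ from the strict pivotal structure), and match it against the diagram for $\rho(Y^*)$ obtained by substituting $Y^*$ into Definition \ref{def:rho}. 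The key identities are that $d(Y) = d(Y^*)$ (sphericity) and that the half-braiding $\s_{\a(x),-}$ for the induced object behaves compatibly under taking duals; reflecting the braiding $R_{x,Y}$ turns it into $R_{x,Y^*}^{\pm1}$ in the expected way, and the normalizing factor $1/d(A)$ is self-dual.

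The main obstacle I anticipate is the multiplicativity step rather than duality. The subtlety is that objects of $\CA$ are multiplied via $\o_A$ (the relative tensor product over $A$), whereas the action in Definition \ref{def:action2} is written using the ambient $\o$ in $\CC$ together with $\widetilde{\ev}$, $\widetilde{\coev}$ and the projection $p_A$. To show $\rho(Y_1\o_A Y_2) = \rho(Y_1)\rho(Y_2)$ I must correctly track how the coevaluation $\coev_{Y_1\o_A Y_2}$ decomposes through $\coev_{Y_1}$ and $\coev_{Y_2}$, which requires inserting the idempotent for $p_A$ and invoking that $\ol{\ev}$, $\ol{\coev}$ descend from $\widetilde{\ev}$, $\widetilde{\coev}$. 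The cleanest route is to avoid recomputing in $\CC$ directly and instead transport everything to $\CA(\a(x),\1)$ via the intertwiner $\vp_x$ of Proposition \ref{p:interwiner}: there the module axiom $\1\cdot f = f$ and $(Y_1\o Y_2)\cdot f = Y_1\cdot(Y_2\cdot f)$ from Proposition \ref{p:action} already give the homomorphism property for the full subalgebra generated by the $\a(x)$, and since $\CC(A,A) = \bigoplus_x \CC(x,A)\o\CC(A,x)$ is built from exactly these pieces, the algebra-homomorphism claim for $\rho$ follows by assembling the component-wise intertwining identities. I would therefore phrase the proof as: \emph{(i)} reduce to the already-established module structure on each $\CC(x,A)$; \emph{(ii)} identify composition in $\End_\CC(A)$ with the module action through the canonical decomposition; \emph{(iii)} handle duality by a single reflection-of-diagram argument.
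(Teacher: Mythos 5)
Your treatment of the homomorphism property is sound and takes a genuinely different route from the paper's. The paper proves multiplicativity by a direct diagrammatic computation in $\CC$, collapsing the nested $X$- and $Y$-loops into a single $X\o_A Y$-loop via \cite[Lem.~1.21]{KO02}; you instead transport the problem to $\CA(\a(x),\1)$ through the intertwiner $\vp_x$ and invoke the module axiom $(V\o W)\cdot f = V\cdot(W\cdot f)$ of Proposition \ref{p:action}, together with the naturality $Y(f\circ g)=(Yf)\circ g$ to identify the action with post-composition by $\rho(Y)$ on each summand of $\CC(A,A)=\bigoplus_x\CC(x,A)\o\CC(A,x)$. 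The paper itself remarks that this alternative is available (``With some explanations, the fact that $\rho$ is an algebra homomorphism can also be obtained from Proposition \ref{p:interwiner}''), so this part is fine; what your route buys is that you never have to manipulate the relative tensor product $\o_A$ diagrammatically, at the cost of having to justify the identification of the module action with composition in $\End_\CC(A)$.

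The duality claim, however, has a genuine gap, and you have misjudged where the difficulty sits. A ``single reflection-of-diagram argument'' using only sphericity ($d(Y)=d(Y^*)$) and the behaviour of $\coev$/$\ev$ under duals does not close: when you rotate the diagram for $\rho(Y)$ by $180$ degrees to form $\rho(Y)^*$, the crossing of the $Y$-loop with the $A$-strand comes out with the \emph{opposite} over/under convention relative to the one used in Definition \ref{def:rho} for $\rho(Y^*)$. Converting that inverse crossing back into the standard one is exactly where the condensability hypotheses enter: the paper's computation uses the spherical structure at one step and, crucially, $\theta_A=\id_A$ together with the commutativity $m_A R_{A,A}=m_A$ at another, to untwist the $A$-strand. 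For a general algebra (or a general object in place of $A$) the two sides differ by a ribbon twist and the identity $\rho(Y^*)=\rho(Y)^*$ is false. Your proposal never invokes $\theta_A=\id_A$ or the commutativity of $A$, so as written the reflection argument cannot be completed; you should also note that the clean way to organize this step is via the nondegenerate trace pairing on $\CC(A,A)$, proving $\langle f,\rho(Y)^*\rangle=\langle f,\rho(Y^*)\rangle$ for all $f$, as the paper does.
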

\begin{proof}
It suffices to prove that $\rho$ respects the multiplication of simple objects for the first assertion. For any $X, Y \in \iCA$, we have
 \begin{align*}
    \rho(X) \circ \rho(Y) & = \frac{1}{d(A)^2}
\vcenter{\xy 0;/r.8pc/: (0,.1)="o";(.9,1.5)**\dir{}, "o",
{\ellipse(1.5,2.2)^,=:a(330){-}},
{\ar@{.}@`{(2.6, .5), (2.3, -1.7), (2,-4)} (1.2,-1.3);(-.1, -6.2)},
{\ar@{.}@`{(.9,1.5)} (1.3, 3);(-1.3, -1.)},
(-1.1,.2)*+{\scriptstyle Y}, "o"+(1,-5),
(.8,-4.9);(1,-4.6)**\dir{},
{\ellipse(1.5,2.2)^,=:a(330){-}},
{\ar@{.}@`{(4, -5), (2.8, -8.2)} (1.7,-6.6);(3, -8)},
(0,-5)*+{\scriptstyle X},
\endxy}  =
\frac{1}{d(A)^2}
\vcenter{\xy 0;/r.8pc/: (0,0)="o";(.2,.4)**\dir{},
{\ellipse(2,3)^,=:a(345){-}}, "o" ;(.1, .2)**\dir{},
{\ellipse(0.8,1.6)^,=:a(330){-}},
{\ar@{.}@`{(.9,1.5)} (1.3, 4);(-.55, -.8)},
{\ar@{.}@*{[thicker]}@`{(2, 1.5), (2.5, -2)} (.7,-.7);(-.9, -2.2)},
(-.4,.4)*+{\scriptstyle Y}, (-.6,4.5)="p",
{\ar@{.}@`{"p"+(4, -5), "p"+(2.8, -8.2)} "p"+(2,-6.6);"p"+(3, -8)},
(-2.4,.4)*+{\scriptstyle X},
\endxy} \\
& =\frac{1}{d(A)^2}
\vcenter{\xy 0;/r.8pc/: (0,0)="o"; (1,1.4)**\dir{}, "o",
{\ellipse(2,3)^,=:a(345){-}}, "o";(0,0)**\dir{},
{\ellipse(0.8,1.6)^,=:a(330){-}},
{\ar@{.}@`{(.9,1.5)} (1.3, 4);(-.55, -.8)},
{\ar@{.}@`{(1.6, 1.4)} (.7,-.7);(1.9, -.7)},
(-.4,.4)*+{\scriptstyle Y}, (-.6,4.5)="p",
{\ar@{.}@`{"p"+(4, -5), "p"+(2.8, -8.2)} "p"+(2.1,-6.6);"p"+(3, -9.1)},
(-2.4,.4)*+{\scriptstyle X},
\endxy}
= \frac{1}{d(A)} \vcenter{\xy 0;/r.9pc/: (0,0)="o";(1,1.4)**\dir{}, "o",
{\ellipse(1.5,2.2)^,=:a(340){-}},
{\ar@{.}@`{(2.5, 1), (2.3, -1.7)} (1.3,-1.3);(2.4, -2.8)},
{\ar@{.}@`{(.9,1.5)} (1.3, 3);(-1.2, -1.)},
(-2,2,2)*+{\scriptstyle X \oa Y}
\endxy}  = \rho(X \oa Y)\,.
 \end{align*}
Here the second last equation follows from \cite[Lem. 1.21]{KO02}. 

    We proceed to prove the equality $\rho(Y^*)=\rho(Y)^*$ by using the nondegeneracy of the pairing $\langle\cdot, \cdot \rangle : \CC(A,A) \times \CC(A,A) \to \BC$. For any $f \in \CC(A,A)$,
    \begin{align*}
   & d(A) \langle f, \rho(Y)^* \rangle = 
\tr\left(\vcenter{\xy 0;/r.9pc/: (0,0)="o";(-1,1.4)**\dir{}, "o",
{\ellipse(1.5,2.2)^,=:a(340){-}},
{\ar@{.}@`{(2, -.8), (2.3, 0)} (1.2,-1.3);(2.4, 1.6)},
{\ar@{.}@`{(.7,4),(-4,3.5), (-3,-2)} (-1.3, -1.); (-3, -3) },
(-1.3,1.8)*+{\scriptstyle Y},
(2.4,2)*+=<9pt,9pt>[F]{\scriptstyle f},
{\ar@{.} (2.4, 2.4); (2.4, 3.5) },
\endxy}\right)\, =\,
\vcenter{\xy 0;/r.9pc/: (0,0)="o";(-.3,1.4)**\dir{}, "o",
{\ellipse(1.5,2.2)^,=:a(340){-}},
{\ar@{.}@`{(2, -.8), (2.3, 0)} (1.2,-1.3);(2.4, 1.6)},
{\ar@{.}@`{(0,4), (2.2,4)} (-1.3, -1.); (2.4, 2.4) },
(-1.3,1.8)*+{\scriptstyle Y},
(2.4,2)*+=<9pt,9pt>[F]{\scriptstyle f},
\endxy}\,=\,
\vcenter{\xy 0;/r.9pc/: (0,0)="o";(.4,1.4)**\dir{}, "o",
{\ellipse(1.5,2.2)^,=:a(340){-}},
{\ar@{.}@`{(2, -.8), (2.3, 0)} (1.2,-1.3);(2.4, 1.6)},
{\ar@{.}@`{(-1,2), (2,4)} (1.1, .5); (2.4, 2.4) },
{\ar@{.}@`{(1.9,.3), (1.9,0)} (1.7, .4); (1.5, -.5) },
(-1.3,1.8)*+{\scriptstyle Y},
(2.4,2)*+=<9pt,9pt>[F]{\scriptstyle f},
\endxy} \\
=&\,
\,\vcenter{\xy 0;/r.9pc/: (0,0)="o";(.4,1.4)**\dir{}, "o",
{\ellipse(1.5,2.2)^,=:a(340){-}},
{\ar@{.}@`{(2, -.8), (2.3, 0)} (1.2,-1.3);(2.4, 1.6)},
{\ar@{.}@`{(-1,2), (2,4)} (1.1, .5); (2.4, 2.4) },
{\ar@{.} (1.7, .3); (2.1, .2) },
(-1.3,1.8)*+{\scriptstyle Y},
(2.4,2)*+=<9pt,9pt>[F]{\scriptstyle f},
\endxy}\,=\, \vcenter{\xy 0;/r.9pc/: (1.8,-1)="o";(2.2,1)**\dir{}, "o",
{\ellipse(1.8,1.8)^,=:a(340){-}},
{\ar@{.}@`{(2, -.8), (2.3, 0)} (0,-1);(2.4, 1.6)},
{\ar@{.}@`{(-1,2), (2,4)}|(.1){\hole} (2.1, 0); (2.4, 2.4) },
%{\ar@{.}@`{(1.9,.3)} (1.6, .4); (1.5, -.5) },
(3.2,-1)*+{\scriptstyle Y},
(2.4,2)*+=<9pt,9pt>[F]{\scriptstyle f},
\endxy} \hspace{-5pt} =
\knotstyle{.}
\vcenter{\xy 0;/r.9pc/: (2,1)\vcap[1],
%(2,1)\vtwist[1.5],
\vtwist~{(2,1)}{(3,1)}{(2,-.5)}{(3.4,-.5)},
(2,-.9)*+=<9pt,9pt>[F]{\scriptstyle f},
{\ar@{.}@`{(2.5,-2.5), (3, -2)} (2, -1.3); (3.5, -3) },
{\ar@{.}@`{(3.5,-1.5), (2, -2)}|(.6){\hole} (3.4, -.5); (2, -3) },
{\ar@{.}@`{(3.5,-5), (1, -5)} (3.5, -3); (1,-5) },
(2.7,-4.8)="o";(4.2,.7)**\dir{}, "o",
{\ellipse(1.8,1.8)^,=:a(345){-}},
{\ar@{.}@`{(2,-3.8)} (2, -3.3);(3,-4.3) },
(4,-5)*+{\scriptstyle Y},
\endxy}\hspace{-10pt} =\,
\vcenter{\xy 0;/r.9pc/: (0,0)="o";(-1,1.4)**\dir{}, "o",
{\ellipse(1.5,2.2)^,=:a(340){-}},
%{\ar@{.}@`{(2, -.8), (2.3, 0)} (1.2,-1.3);(2.4, 1.6)},
{\ar@{.}@`{(.7,2.3), (-1.2,2.3)} (-1.3, -1.); (-1.2, 3) },
(-1.2,3.4)*+=<9pt,9pt>[F]{\scriptstyle f},
(-1.7,1.5)*+{\scriptstyle Y^*},
(-1.2,3.9)\vcap[2],
{\ar@{.}@`{(.9,2)}|(.4){\hole} (0.8, 3.9); (-.8, -1.9) },
\endxy}\,=\hspace{-4pt}
\vcenter{\xy 0;/r.9pc/: (0,0)="o";(-1,1.4)**\dir{}, "o",
{\ellipse(1.3,2)^,=:a(340){-}},
%{\ar@{.}@`{(2, -.8), (2.3, 0)} (1.2,-1.3);(2.4, 1.6)},
{\ar@{.}@`{(.7,2.3), (-1.2,2.3)} (-1.2, -1.); (-1.2, 3) },
(-1.2,3.4)*+=<9pt,9pt>[F]{\scriptstyle f},
(-1.5,1.5)*+{\scriptstyle Y^*},
(-1.2,3.9)\vcap[3],
{\ar@{.}@`{(2,1),(1.5, 0)} (1.8, 3.9); (1.1,-1.2) },
\endxy} =d(A) \langle f, \rho(Y^*) \rangle\,, 
     \end{align*}\\\\
  where the fifth equality follows from the spherical pivotal structure, and the 7-th equality is a consequence of $\theta_A = \id_A$ and $A$ being commutative.  The nondegeneracy of $\langle\cdot, \cdot\rangle$ and $d(A) \ne 0$ imply $\rho(Y^*) = \rho(Y)^*$.
\end{proof}

\begin{prop} \label{p:diff_form}
    For any $Y \in \CA$, we have
    $$
    \vcenter{\xy 0;/r.9pc/: (0,0)="o";(1,1.5)**\dir{}, "o",
{\ellipse(1.5,2.2)^,=:a(345){-}},
{\ar@{.}@`{(2.5, 1), (2.3, -1.7)} (1.3,-1.3);(2.4, -2.8)},
{\ar@{.}@`{(.9,1.5)} (1.3, 3);(-1.4, -.3)},
(-1.6,1.5)*+{\scriptstyle Y},
\endxy} \quad = \quad
    \vcenter{\xy 0;/r1pc/: (0,0)="o";(-1,1.4)**\dir{}, "o",
{\ellipse(1.3,2)^,=:a(340){-}},
{\ar@{.}@`{(.7,2.3), (-1.2,2.3)} (-.5,-1.6); (-1.2, 3) },
{\ar@{.}@`{(-.7,-2), (-1,-3)} (-.7,-2); (-1.3, -3) },
{\ar@{.}@`{(-.7,1)} (-1.3,-.3); (-.1, -.3) },
%{\ar@{.}@`{(.7,2.3), (-1.2,2.3)} (-1.2, -2); (-1.2, 3) },
(-1.5,1.1)*+{\scriptstyle Y},
\endxy} \qquad\,.
$$       
\end{prop}
\begin{proof}
Recall that $\ev_A = \e m_A$. So
\begin{align*}
&\vcenter{\xy 0;/r1pc/: (0,0)="o";(-1,1.4)**\dir{}, "o",
{\ellipse(1.3,2)^,=:a(340){-}},
{\ar@{.}@`{(.7,2.3), (-1.2,2.3)}|(.05)\hole (1,-2); (-1.2, 3) },
{\ar@{.}@`{(2,-4), (3,3)} (1,-2); (3, 3) },
{\ar@{.}@`{(-.7,1)} (-1.3,-.3); (.7, -.3) },
(-1.5,1.1)*+{\scriptstyle Y},
\endxy} =
\vcenter{\xy 0;/r1pc/: (0,0)="o";(-1,1.4)**\dir{}, "o",
{\ellipse(1.3,2)^,=:a(340){-}},
{\ar@{.}@`{(.7,2.3), (-1.2,2.3)}|(.1)\hole (1.2,-2.5); (-1.2, 3) },
{\ar@{.}@`{(3,2)} (1.2,-2.5); (3, 3) },
{\ar@{.} (1.2,-2.5); (1.2, -3.3) },
{\ar@{.}@`{(-.7,1)} (-1.3,-.3); (.7, -.3) },
(-1.5,1.1)*+{\scriptstyle Y},
(1.2, -3.5)*{\circ},
\endxy}
 =
\vcenter{\xy 0;/r1pc/: (0,0)="o";(-1,1.4)**\dir{}, "o",
{\ellipse(1.3,2)^,=:a(330){-}},
{\ar@{.}@`{(.7,2.3), (-1.2,2.3)}|(.1)\hole (1.7,-1.5); (-1.6, 3) },
{\ar@{.}@`{(3,2)} (1.2,-2.5); (3, 3) },
{\ar@{.} (1.2,-2.5); (1.2, -3.3) },
{\ar@{.}@`{(.5,1.5)}|(.85)\hole (-1.3,-.3); (1.2, -2.5) },
(-1.5,1.1)*+{\scriptstyle Y},
(1.2, -3.5)*{\circ},
\endxy}
 =
\vcenter{\xy 0;/r1pc/: (0,0)="o";(-1,1.4)**\dir{}, "o",
{\ellipse(1.3,2)^,=:a(330){-}},
{\ar@{.}@`{(.7,2.3), (-1.2,2.3)}|(.1)\hole (1.7,-1.5); (-1.6, 3) },
{\ar@{.}@`{(1.5,-3.5)} (1.2,-2.5); (3, 3) },
{\ar@{.}@`{(.5,1.5)}|(.85)\hole (-1.3,-.3); (1.2, -2.5) },
(-1.5,1.1)*+{\scriptstyle Y},
\endxy}\\
&=
\vtop{\xy 0;/r.9pc/: (0,0)="o";(1,1.5)**\dir{}, "o",
{\ellipse(1.5,2.2)^,=:a(345){-}},
{\ar@{.}@`{(2, 0)}|(.55){\hole} (-1.4,-1);(2.4, 3)},
{\ar@{.}@`{(.9,1.5)} (1, 3);(-.3, -.5)},
(-1.6,1.5)*+{\scriptstyle Y},
\endxy} =   
\vtop{\xy 0;/r.9pc/: (0,0)="o";(1,1.5)**\dir{}, "o",
{\ellipse(1.5,2.2)^,=:a(345){-}},
{\ar@{.}@`{(2, 0)}|(.55){\hole} (-1.4,-1);(2.4, 3)},
{\ar@{.}@`{(.9,1.5)} (1.3, 3);(-1.4, -.3)},
(-1.6,1.5)*+{\scriptstyle Y},
\endxy} =\vtop{\xy 0;/r.9pc/: (0,0)="o";(1,1.5)**\dir{}, "o",
{\ellipse(1.5,2.2)^,=:a(345){-}},
{\ar@{.}@`{(2, 0)} (1.3,-1.3);(2.4, 3)},
{\ar@{.}@`{(.9,1.5)} (1.3, 3);(-1.4, -.3)},
(-1.6,1.5)*+{\scriptstyle Y},
\endxy}\,.\\
\end{align*}
Now the statement follows the zig-zag equation of duality.
\end{proof}
It is worth noting that $\rho(Y)$ can be rewritten to a left $A$-module version  
$$
T_Y=\frac{1}{d(A)}
\vcenter{\xy 0;/r.9pc/: (0,0)="o";(-1,1.4)**\dir{}, "o",
{\ellipse(1.3,2)^,=:a(340){-}},
{\ar@{.}@`{(.7,2.3), (-1.2,2.3)} (-.5,-1.6); (-1.2, 3) },
{\ar@{.}@`{(-.7,-2), (-1,-3)} (-.7,-2); (-1.3, -3) },
{\ar@{.}@`{(-.7,1)} (-1.3,-.3); (-.1, -.3) },
%{\ar@{.}@`{(.7,2.3), (-1.2,2.3)} (-1.2, -2); (-1.2, 3) },
(-1.5,1.1)*+{\scriptstyle Y},
\endxy} 
$$ of that defined by Riesen in \cite{Rie2025} and by Kirillov in \cite{K02} up to a scalar. It follows from Proposition \ref{p:diff_form} that Lemma \ref{l:rho_dual} is just another proof for Theorem 5 and part of Theorem 6 in \cite{Rie2025}.

\section{Schur-Weyl duality}
We continue our development of the fusion action of $\CA$ on the condensable algebra $A$ in a modular tensor category $\CC$, and prove the categorical Schur-Weyl duality in Theorem \ref{t1}.

Recall that for any MTC $\MM$, the irreducible characters $\chi_X$ of $K(\MM)$ are 1-dimensional and  parametrized by $X \in \irr(\MM)$. More precisely, $\chi_X(Y) = \frac{S^\MM_{X,Y}}{d^\MM(X)}$ for all $Y \in \irr(\MM)$ defines a character if $K(\MM)$ by the Verlinde formula, where $d^\MM(X)$ denotes the pivotal dimension of $X$ in $\MM$ and $S^\MM$ is the (unnormalized) $S$-matrix of $\MM$. In particular, $\chi_\1(Y) = d^\MM(Y)$. Thus, the indecomposable central idempotent $e^\MM_X$ of $K(\MM)$ are also indexed by $X \in \irr(\MM)$ so that
$u\, e^\MM_X = \chi_X(u) \,e^\MM_X$ for $u \in K(\MM)$. These idempotents can be expressed in terms of the  $S$-matrix of $\MM$, namely
$$
e^\MM_X = \frac{d^\MM(X)}{\dim(\MM)}\sum_{Y \in \irr(\MM)} S^\MM_{X,Y} Y^*\,,
$$
and $\{e_X^\MM\mid X \in \irr(\MM)\}$ is the set of primitive idempotents of $K(\MM)$.

If $\BB$ is a spherical fusion category,  then $\MM = Z(\BB)$ is an MTC \cite{MugerSF2}. It follows from \cite[Lem. 8.49]{ENO} that the forgetful functor $F: \MM \to \BB$ induces an algebra homomorphism $\ul F : K(\MM) \to K(\BB)$, and its image is the center of $K(\BB)$. By \cite[Thm. 2.13]{O15}, $e^\MM_X \in \ker \ul F$ if and only if $[F(X): \1]_\BB =0$. 

Now, we return to our setting with $\BB= \CA$. By \cite{DMNO} or \eqref{eq:decomp},  $Z(\CC_A)$ is equivalent to $\CC \boxtimes \AA$ as spherical braided fusion categories, where $\AA = (\CAo)^{\rev}$. Thus, 
$$
\irr(Z(\CC_A)) = \{\a^+(x)\oa Y \mid x \in \irr(\CC), Y \in \irr(\AA)\} \,.
$$
and  the set of indecomposable central idempotents of $K(\CA)$ is given by 
\begin{equation}\label{e1}
\{e_{x}^{\CC}e_Y^{\AA} \mid x \in \irr(\CC) \text{ and } Y \in \irr(\CAo) \text{ such that } [\a(x) \oa Y,\1]_{\CC_A} \ne 0\}.
\end{equation}

Now, we can explicitly state and  prove our Theorem I as follows.
\begin{thm}[Schur-Weyl duality]\label{t1} Let $\CC$ be an MTC and $A \in \CC$ a condensable algebra.
Let $\AA = (\CAo)^{\rev}$ and $\rho: K(\CA) \to \CC(A,A)$ the $\BC$-algebra homomorphism defined in Definition \ref{def:rho}. Then
\begin{enumeri} 
  \item   $\ker \rho$ is the ideal of $K(\CA)$ generated by $1 -e_\1^\AA.$
  \item   $\CC(x, A)$ is an irreducible $K(\CA)$-module for any $x \in \irr(\CC)$ with $[x,A]_\CC \ne 0.$
  \item  For any $x, y\in \iC$, $\CC(x, A) \cong \CC(y, A)$ as nonzero $K(\CA)$-modules if and only if $x = y.$
  \item  The restriction $\rho : e_\1^\AA K(\CA) \to \CC(A,A)$ is an isomorphism of $\BC$-algebras.
\end{enumeri}
\end{thm}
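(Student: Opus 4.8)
The plan is to reduce all four assertions to a single statement: for every $x\in\irr(\CC)$ with $W_x:=\CC(x,A)\ne 0$, the $K(\CA)$-module $W_x$ is exactly the irreducible module attached to the primitive central idempotent $e_x^\CC e_\1^\AA$ in the list \eqref{e1}. First I would pin down $\rho$ as a representation. By naturality of the braiding, $R_{A,Y}\circ(g\o\id_Y)=(\id_Y\o g)\circ R_{x,Y}$ for $g\in\CC(x,A)$, and comparing this with Definition \ref{def:action2} gives $\rho(Y)\circ g=Y\cdot g$. Hence, under the canonical algebra isomorphism $\CC(A,A)\cong\bigoplus_x\End_\BC(W_x)$ coming from $\CC(A,A)=\bigoplus_x\CC(x,A)\o\CC(A,x)$, where $\phi\in\CC(A,A)$ acts on $W_x$ by post-composition, $\rho$ is identified with $\bigoplus_x\rho_x$, the $\rho_x\colon K(\CA)\to\End_\BC(W_x)$ being the fusion actions. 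By Propositions \ref{p:interwiner} and \ref{p:action} the character of $\rho_x$ is $\chi_{W_x}(Y)=\nu^{\a^+(x)}_{(0,-1)}(Y)$.

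Next I handle the local part and a dimension count. By Proposition \ref{p:CAo} each $Y\in\CAo$ acts on $W_x$ by $d_A(Y)\,\id$, so $e_\1^\AA=\frac{1}{\dim(\CAo)}\sum_{Y\in\irr(\CAo)}d_A(Y)\,Y$ acts by $\frac{1}{\dim(\CAo)}\sum_Y d_A(Y)^2\,\id=\id_{W_x}$. Thus $\rho(e_\1^\AA)=\id_A$ and $\rho(1-e_\1^\AA)=0$; equivalently $(1-e_\1^\AA)K(\CA)\subseteq\ker\rho$, the map $\rho$ factors through the central corner $e_\1^\AA K(\CA)=\bigoplus_{x:W_x\ne 0}(e_x^\CC e_\1^\AA)K(\CA)$, and only the blocks with $Y=\1$ can occur in any $W_x$. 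Separately, the Wedderburn decomposition of $K(\CA)$ through its center (via $\ul F$ and \cite[Thm.~2.13]{O15}) gives $\dim M_{(x,\1)}=[F(\a^+(x)):\1]_\CA=[\a(x):\1]_\CA=[x,A]_\CC=\dim W_x$, using $F\a^+=\a$ and the adjunction \eqref{eq:adj}.

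The crux (Step 3) is to identify the block. Put $\pi^{(x)}_{x'}:=\rho_x(\ul{\a^+}(e^\CC_{x'}))$; since $F\a^+=\a$ sends $[z]$ to $[\a(z)]$ and $\ul{\a^+}(e^\CC_{x'})=\sum_Y e^\CC_{x'}e^\AA_Y$, the preceding paragraph shows the $Y\ne\1$ terms vanish on $W_x$, so the $\pi^{(x)}_{x'}$ are orthogonal idempotents summing to $\id_{W_x}$. Expanding $e^\CC_{x'}=\frac{d(x')}{\dim(\CC)}\sum_z S^\CC_{x',z}z^*$ and using that the character of $\rho_x$ at the induced object is the indicator, I would compute
\[
\Tr\big(\pi^{(x)}_{x'}\big)=\frac{d(x')}{\dim(\CC)}\sum_{z\in\irr(\CC)}S^\CC_{x',z}\,\nu^{\a^+(x)}_{(0,-1)}(\a(z^*)).
\]
The main obstacle is the evaluation of the indicator of an induced object: by the graphical calculus for $E^{(0,-1)}$ together with the $\SLZ$-equivariance of \cite{NS10} and the modular data of $Z(\CA)\simeq\CC\boxtimes\AA$, the $S$-transform converts the $(0,-1)$-indicator into a Hopf-link pairing in $\CC$, and because $\a^+(x)=x\boxtimes\1$ lies in the modular subcategory $\a^+(\CC)$ only the $\CC$-entry survives, yielding $\nu^{\a^+(x)}_{(0,-1)}(\a(w))=\frac{[x,A]_\CC}{d(x)}\,S^\CC_{x,w}$. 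Granting this, the orthogonality $\sum_z S^\CC_{x',z}S^\CC_{x,z^*}=\dim(\CC)\,\delta_{x,x'}$ gives $\Tr(\pi^{(x)}_{x'})=[x,A]_\CC\,\delta_{x,x'}=\dim(W_x)\,\delta_{x,x'}$. As each $\pi^{(x)}_{x'}$ is idempotent, $\pi^{(x)}_{x}=\id_{W_x}$ and $\pi^{(x)}_{x'}=0$ for $x'\ne x$, so $W_x$ is isotypic of type $M_{(x,\1)}$, and since $\dim W_x=\dim M_{(x,\1)}$ from the previous paragraph, $W_x\cong M_{(x,\1)}$ is irreducible.

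Finally I assemble everything. Irreducibility of $W_x$ is (ii), and $W_x\cong W_y$ iff they share the same block idempotent $e^\CC_x e^\AA_\1=e^\CC_y e^\AA_\1$ iff $x=y$, which is (iii). Because $\rho_x$ is supported on the single block $e^\CC_x e^\AA_\1$ and these blocks act on distinct summands $W_x$ faithfully, $\rho|_{e_\1^\AA K(\CA)}$ is injective; as it is a unital ($\rho(e_\1^\AA)=\id_A$) algebra map between semisimple algebras of equal dimension $\sum_x[x,A]_\CC^2=\dim\CC(A,A)$, it is an isomorphism, giving (iv). Combining $(1-e_\1^\AA)K(\CA)\subseteq\ker\rho$ with this injectivity on the complementary corner yields $\ker\rho=(1-e_\1^\AA)K(\CA)$, the ideal generated by $1-e_\1^\AA$, which is (i). Everything outside the boxed indicator formula and the cited Wedderburn dimension is formal semisimple-algebra bookkeeping, so that formula is where I expect the real work to lie.
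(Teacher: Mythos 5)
Your proposal reaches all four statements, and for (i), (iii), (iv) it is essentially the paper's bookkeeping; the genuine divergence is in (ii), which is where the paper does its heavy lifting. Both arguments ultimately rest on the same computation: the value of the action of an induced object $\a(y)$ on $\CA(\a(x),\1)$. The paper establishes this as the pointwise operator identity \eqref{eq:indicator(a(y))}, $\a(y)f=\tfrac{S^\CC_{x^*,y}}{d(x)}f$, by a short direct graphical manipulation (no $\SLZ$-machinery is needed for this step); your boxed indicator formula is exactly its trace, up to an $x$ versus $x^*$ labelling that does not affect the logic. Your idempotent-trace argument --- that the $\rho_x(\ul{\a}(e^\CC_{x'}))$ are orthogonal idempotents with traces $[x,A]_\CC\,\delta_{x,x'}$, forcing $W_x$ to be isotypic for a single block of $e_\1^\AA K(\CA)$ --- is correct and is in substance what the paper's proof of (i) does. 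Where you truly depart is irreducibility: you import $\dim M_{(x,\1)}=[F(\a^+(x)),\1]_{\CA}$ from \cite[Thm.~2.13]{O15}, whereas the paper uses only the formal codegree $f_{M_x}=\dim(\CC_A)/d_A(\a(x^*))$ from that theorem and then determines the multiplicity $n_x$ by evaluating $\Tr(\phi_{W_x},W_x)=\sum_{Y\in\iCA}\nu^{\a^+(x)}_{(0,-1)}(Y)\,\nu^{\a^+(x)}_{(0,-1)}(Y^*)$ through the $\SLZ$-equivariance of generalized indicators --- a sum over all of $\iCA$, not just the induced objects. If Ostrik's theorem really does supply the dimension of the irreducible representation (and not merely its formal codegree and the support criterion), your route is complete and noticeably shorter; that single citation is the step you must verify, and the paper's deliberate detour through formal codegrees suggests its authors did not treat the dimension formula as freely quotable. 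Finally, your sketched derivation of the indicator formula via an $S$-transform to a Hopf-link pairing is vaguer than what is needed; the clean way to obtain it is the direct graphical computation of \eqref{eq:indicator(a(y))}, which you need in any case for your step establishing that $e^\CC_{x}e^\AA_\1$ is the unique block acting nontrivially on $W_x$.
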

\begin{proof}
    (i) Note that $e_\1^\AA := \frac{1}{\dim(\AA)}\sum_{X \in \irr(\AA)} d_A(X) X$ is an indecomposable idempotent of $K(\AA)$ corresponding the character $\chi_\1(X) = d_A(X)$ for $X \in \irr(\AA)$.
    By Proposition \ref{p:CAo}, $\rho(X) = d_A(X)\id_A$ for $X \in \irr(\AA)$, and so we find $\rho(e^\AA_\1)  = \id_A$. Thus $1-e^\AA_\1 \in \ker \rho$ and so the ideal of $K(\CA)$ generated by $1-e_\1^\AA$ is contained in $\ker \rho$.
Since $e_\1^\AA$ is a central element of $K(\CA)$, $(1-e^\AA_\1)K(\CA)$ and 
$e^\AA_\1 K(\CA)$ are  two sided ideal and $K(\CA)=(1-e^\AA_\1)K(\CA)\oplus e^\AA_\1 K(\CA).$ For convenience we set $\ol K=e^\AA_\1 K(\CA).$

From  (\ref{e1}) on the indecomposable central idempotents of 
$K(\CA)$, we see that the indecomposable central idempotents $\ol K$ are given by
\begin{equation}\label{e2}
\{e^\AA_\1 e_{x}^{\CC} \mid x \in \irr(\CC) \text{ such that } [\a(x),\1]_{\CC_A} \ne 0\}.
\end{equation}
To complete the proof of this statement, it suffices to show that $\rho(e_\1^\AA e_x^{\CC})\ne 0$ for $[\a(x),\1]_{\CC_A} \ne 0$ or $[x, A]_\CC \ne 0$. 

    Let $x \in \irr(\CC)$ such that $[x, A]_\CC \ne 0$. For any $y \in \irr(\CC)$ and  $f \in \CA(\a(x), \1)$,
    \begin{equation} \label{eq:indicator(a(y))}
       \a(y)f =\hspace{-5pt}{\color{blue}
\vcenter{\xy 0;/r1pc/:
(0,0)*=<10pt,10pt>[F]{\scriptstyle f},
{\ar@{-}@`{(-.3,1), (-2,.7)} (0,.45);(-1.5,2.2)*+{\color{black}\scriptstyle \a(x)}},
(0,0)="o";(-1.4,.7)**\dir{}, "o",
{\ellipse(1.3,1.5)^,=:a(340){-}},
(-2,-.5)*+{\color{black}\scriptstyle \a(y)},
\endxy}
}\hspace{15pt}
={\color{blue}
\vcenter{\xy 0;/r1pc/:
(-1,-2.5)*=<10pt,10pt>[F]{\scriptstyle f},
{\ar@{-}@`{(-1, -1.9),  (.4,.2), (-1.2,.6)}|(0.25){\hole} (-1,-2.1);(-1.5,2.2)*+{\color{black}\scriptstyle \a(x)}},
(0,0)="o";(-1.4,.7)**\dir{}, "o",
{\ellipse(1.3,1.5)^,=:a(340){-}},
(-2.1,-.5)*+{\color{black}\scriptstyle \a(y)},
\endxy}}
\hspace{20pt} =\, \frac{S^\CC_{x^*,y}}{d(x)} f
    \end{equation}
since $\a(y)$ admits a half-braiding (cf. \eqref{eq:S-matrix} for the convention of the $S$-matrix). Therefore, for $z \in \iC$,
$$
e_{z}^{\CC} f = \frac{d(z)}{\dim(\CC)}\sum_{y \in \irr(\CC)} S^\CC_{z,y} \a(y^*) f =
\frac{d(z)}{\dim(\CC)}\sum_{y \in \irr(\CC)} \frac{S^\CC_{z,y} S^\CC_{x^*,y^*}}{d(x)} f = \delta_{z,x^*}f\,.
$$
Thus, $e_{x}^{\CC}$ acts as identity on $\CA(\a(x^*), \1)$ or $\CC(x^*, A)$ by Proposition \ref{p:interwiner}.  

For any $x \in \irr(\CC)$, let $B^x = \{\iota_{x,i}\}_i$ be a basis for $\CC(x, A)$, and  $B_x=\{\pi_{x,i}\}_i$ the dual basis of $B^x$ for $\CC(A, x)$ relative to the nondegenerate bilinear form $ \CC(A, x) \times \CC(x, A) \to  \CC(x,x)$ defined by composition, i.e. $\pi_{x,i} \circ \iota_{x,j} = \delta_{i,j} \id_x$. Then  $P_x=\sum_i \iota_{x,i} \circ \pi_{x,i} \in \CC(A,A)$ is an idempotent, and the image of $P_x$ is the isotypic component of $x$ in $A$. Moreover, $\id_A = \sum_{x \in \irr(\CC)} P_x$. In particular, 
$$
\rho(e_\1^\AA e_{x}^{\CC}) = \rho(e_\1^\AA) \circ \rho(e_{x}^{\CC}) = e_{x}^{\CC} \id_A = P_{x^*} \ne 0
$$
for all $x \in \irr(\CC)$ with $[x,A]_\CC \ne 0$. Thus, the restriction $\rho|_{\ol K}$  is injective or $\ker \rho = (1-e_\1^\AA )K(\CA)$.

(ii) Let $x \in \iC$. It follows from (i) that $\CA(\a(x), \1)$ is a (complex) representation of $K(\CA)$ such that all but one indecomposable central idempotents of $K(\CA)$ acts trivially on $\CA(\a(x), \1)$ and $e_\1^\AA e_{x^*}^{\CC}$ acts as identity. Let $M_x$ be a simple $K(\CA)$-module such that $e_\1^\AA e_{x^*}^{\CC}$ acts as identity of $M_x$. Then $\CA(\a(x), \1) \cong n_x M_x$ for some positive integer $n_x$, and 
$\a(y) f = \frac{S_{x^*, y}}{d(x)} f$ for $y \in \iC, f \in \CA(\a(x), \1)$. 

By \cite{Lu03}, the element 
$$
\phi_{M_x} = \sum_{Y \in  \iCA} \Tr(Y, M_x)Y^* \in K(\CA)
$$
is central and it acts as a scalar $f_{M_x}$ on $M_x$ and zero on any other irreducible $K(\CA)$-modules. The scalar $f_{M_x}$, called  the \emph{formal codegree} of $M_x$, was proved in \cite[Thm. 2.13]{O15} that
$$
f_{M_x} = \frac{\dim(\CC_A)}{d_A(\a(x^*))} = \frac{\dim(\CC)}{d(x)d(A)}\,.
$$

Let  $W_x$ denote the $K(\CA)$-module $\CA(\a(x),\1)$.  Then 
$$
\phi_{W_x} = \sum_{Y \in  \iCA} \Tr(Y, W_x)Y^* = n_x \phi_{M_x}
$$
and it acts on $W_x$ as the scalar $n_x f_{M_x}$, and so 
\begin{equation}\label{eq:trace1}
    \Tr(\phi_{W_x}, W_x) = n_x f_{M_x} [x, A]_\CC = n_x \frac{\dim(\CC) [x, A]_\CC}{d(A)d(x)}\,.
\end{equation}

Recall from Section \ref{s:indicator} that $\Tr(Y, W_x)= \Tr(E_{\a(x), Y}^{(0,-1)}) = \nu_{(0,-1)}^{\a^+(x)}(Y)$ where $E_{\a(x), Y}^{(0,-1)}$ is the endomorphism on  $\CC_A(\a(x),\1)$ defined in terms of diagram \eqref{eq: indicator_diag}.  We proceed to compute $\Tr(\phi_{W_x}, W_x)$ by using properties of generalized Frobenius-Schur indicators $\nu_{(n,k)}^{\a^+(x)}(Y)$, which was synopsized in Section \ref{s:indicator}. The readers are referred to \cite{NS10} for more detail on their arithmetic properties, which are briefly summarized as follows.

The modular group $\SLZ$,  generated by $\fs = \mtx{0&-1\\ 1 & 0}$ and $\ft= \mtx{1 & 1\\ 0 & 1}$, admits a  canonical action on $K(Z(\CC_A))$. Since $Z(\CA)$ is equivalent to $\CC \boxtimes \AA$ as braided spherical fusion categories, the simple objects of $Z(\CA)$  are indexed by the pairs $(x, X) \in \iC \times \iA$. The pair $(x,X) \in \iC \times \iA$ is identified with the simple object $\a^+(x) \boxtimes X$ in $Z(\CC_A)$ (cf. \eqref{eq:decomp}).

Let $\BS$ and $\BT$ be the $S$ and $T$ matrices of $Z(\CC_A)$. They are indexed by the set $\iC \times \iA$, and 
$$
\BS_{(x, X), (y,Y)} = S^\CC_{x,y} S^\AA_{X,Y} \quad\text{and}\quad \BT_{(x, X), (y,Y)} = \delta_{X,Y} \delta_{x,y} \theta_x^{\CC} \theta^\AA_X 
$$
for any $(x, X), (y,Y) \in \iC \times \iA$, where $\theta^\CC_x$ and $\theta_X^\AA$ are the scalars of the components of the ribbon structures $\theta^\CC$ and $\theta^\AA$. 

For simplicity, we will write $\a^+(x) X$ for $\a^+(x) \boxtimes X$ in $Z(\CA)$ and $\a(x) X$ for $\a(x) \oa X$ in $\CA$. Then, the $\SLZ$-action on $K(Z(\CC_A))$ is given by
\begin{align}
   \fs \cdot \a^+(x) X & =\sum_{\substack{y \in \iC \\ Y \in \iA}}   \frac{S^\CC_{x, y} S^\AA_{X, Y}}{\dim(\CC_A)} \, \a^+(y) Y, \label{eq:s-action} \\
    \ft \cdot \a^+(x)X & =  \theta_x^\CC \theta_X^\AA \,\a^+(x)X\,. \label{eq:t-action}
\end{align}
For any $(n,k) \in \BZ^2$, the generalized Frobenius-Schur indicator $\nu_{(n,k)}^{\bX}(Y)$, defined for any objects $\bX$  in $Z(\CA)$ and $Y \in \CC_A$, is additive in $\bX$ and $Y$.  Based on the discussion for \eqref{eq:(n,0)},
\begin{equation}\label{eq:ind(1,0)}
    \nu_{(1,0)}^{\bX}(Y) = [X, Y]_{\CA}  \quad\text{for } \bX=(X, \s_{X,-}) \in Z(\CA)\,.
\end{equation}

One can extend $\nu_{(n,k)}^{\bX}(Y)$ linearly to a functional $\nu_{(n,k)}^{z}(Y)$  for $z \in K(Z(\CC_A))$. By \cite[Prop. 3.3, Lem. 5.3 (i)]{NS10}, we  have
\begin{equation} \label{eq:dual}
    \nu_{(n,k)}^{\fs^2\cdot\bX}(Y) =\nu_{(n,k)}^{\bX^*}(Y) = \nu_{(n,k)}^{\bX}(Y^*) 
\end{equation}
for $\bX \in Z(\CC_A)$. Moreover, it follows by \cite[Thm. 5.4]{NS10} that
\begin{equation} \label{eq:equivariance}
\nu_{(n,k)}^{\fg \cdot z}(Y) = \nu_{(n,k)\tilde \fg}^{z}(Y)
\end{equation}
for any $\fg \in \SLZ$, where $\tilde \fg = \fj \fg \fj$ with  $\fj = \mtx{1 & 0 \\ 0 & -1}$ and $(n,k)\tilde{\fg}$ denotes the matrix multiplication.

By the definition of $\nu_{(0,-1)}^{\a^+(x)}(Y)$, we first rewrite 
$$
\phi_{W_x} = \sum_{Y \in \iCA} \nu_{(0,-1)}^{\a^+(x)}(Y) Y^*
$$
and so
\begin{align*}
    \Tr(\phi_{W_x}, W_x) =& \sum_{Y \in \iCA}\nu_{(0,-1)}^{\a^+(x)}(Y) \Tr(Y^*, W_x) = \sum_{Y \in \iCA}\nu_{(0,-1)}^{\a^+(x)}(Y)\cdot \nu_{(0,-1)}^{\a^+(x)}(Y^*) \\
    \stackrel{\text{by }\eqref{eq:dual}}{=}& \sum_{Y \in \iCA}\nu_{(0,-1)}^{\a^+(x)}(Y)\cdot \nu_{(0,-1)}^{\a^+(x)^*}(Y) = \sum_{Y \in \iCA}\nu_{(0,-1)}^{\a^+(x)}(Y)\cdot \nu_{(0,-1)}^{\fs^2\cdot \a^+(x)}(Y)
    \end{align*}
\begin{align*}
    = &  \sum_{Y \in \iCA}\nu_{(0,-1)}^{\a^+(x)}(Y)\cdot \nu_{(1,0)}^{\fs\cdot \a^+(x)}(Y)  \quad\text{by  } \eqref{eq:equivariance}\\
    = &  \sum_{\substack{Y \in \iCA\\ y \in \iC, X \in \irr(\AA)}}\nu_{(0,-1)}^{\a^+(x)}(Y)\cdot \frac{S^\CC_{x,y} S^\AA_{\1,X}}{\dim(\CC_A)}\nu_{(1,0)}^{\a^+(y)  X}(Y) \quad\text{by  } \eqref{eq:s-action}\\
    = & \sum_{\substack{Y \in \iCA\\ y \in \iC, X \in \irr(\AA)}}\nu_{(0,-1)}^{\a^+(x)}(Y)\cdot \frac{S^\CC_{x,y} S^\AA_{\1,X}}{\dim(\CC_A)}[\a(y)X, Y]_{\CC_A} \quad\text{by  } \eqref{eq:ind(1,0)}\\
    = & \sum_{\substack{X \in \irr(\AA)\\ y \in \iC}}\nu_{(0,-1)}^{\a^+(x)}(\a(y)X)\cdot \frac{S^\CC_{x,y} S^\AA_{\1,X}}{\dim(\CC_A)}\\
    = & \sum_{\substack{X \in \irr(\AA)\\ y \in \iC}}\nu_{(0,-1)}^{\a^+(x)}(\a(y))\cdot \frac{ d_A(X)S^\CC_{x,y} S^\AA_{\1,X}}{\dim(\CC_A)}  \quad\text{by Proposition }\ref{p:CAo} \\
    = &   \sum_{y \in \iC}\frac{S^\CC_{x^*, y}}{d(x)} [x, A]_\CC \frac{ \dim(\AA)S^\CC_{x,y} }{\dim(\CC_A)} \quad\text{by }\eqref{eq:indicator(a(y))} \\
    = &\,\, \frac{ \dim(\CC)\dim(\AA)[x, A]_\CC}{d(x) \dim(\CC_A)} = \frac{ \dim(\CC)[x, A]_\CC}{d(x) d(A)} \,.
\end{align*}
It follows from \eqref{eq:trace1} that $n_x=1$, and hence $W_x$ is an irreducible $K(\CA)$-module.

(iii)  From the proofs of (i) and (ii), we know that,  for any simple object $x \in \CC$ with $[x,A]_\CC \ne 0$, $\CA(\a(x), \1)$ is a simple $K(\CA)$-module such that $e_\1^\AA e_{x^*}^{\CC}$ acts as identity on $\CA(\a(x), \1)$  and all other indecomposable central idempotents of $K(\CA)$ act trivially on it. It follows immediately that $\CA(\a(x), \1)$ and $\CA(\a(y), \1)$,  for any simple objects $x,y \in \CC$ such that $[x,A]_\CC \cdot [y,A]_\CC \ne 0$, are inequivalent $K(\CA)$-modules if $x\not\cong y.$   

(iv) By (i)-(iii), for $x \in \iC$ with $[x,A]_\CC \ne 0$, $P_x$ defined in the proof of (i) is an indecomposable central idempotent of $\CC(A,A)$, and the simple ideal of $\CC(A,A)$ generated by $P_x$ has dimension $[x, A]_\CC^2$.  Since $\rho(e_\1^\AA e_x^\CC) = P_{x^*}$, the assertion (iv)
follows from  \eqref{e2} on the indecomposable central idempotents 
of $\ol K$. \end{proof}
The following corollary  is a refinement of \cite[Cor. 2.16]{O15} and it also addresses the question thereafter.
\begin{cor}\label{c1}
Let $\BB$ be any spherical fusion category, $\CC=Z(\BB)$ the center of $\BB$, $I: \BB \to \CC$ the right adjoint of the forgetful functor from $\CC$ to $\BB$, and $A = I(\1)$.  Then 
$\rho: K(\BB) \to \CC(A,A)$ defined in \ref{def:rho} is an isomorphism of $\BC$-algebras. Moreover, for any $\bX = (X, \s_{X,-}) \in \CC$, 
the representation of $K(\BB)$, $Y \mapsto E_{\bX, Y}^{(0,-1)}$, on the vector space $\BB(X, \1)$ is irreducible if and only if $\bX$ is a simple object in $\CC$ and $[X, \1]_\BB >0$. Furthermore, every irreducible representation of $K(\BB)$ is of this form, and $\BB(X, \1)$ and $\BB(Y, \1)$ are equivalent irreducible $K(\BB)$-modules if and only if $\bX \cong \bY$ in $\CC$.
\end{cor}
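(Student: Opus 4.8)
The plan is to reduce every claim to Theorem~\ref{t1} applied to the modular tensor category $\CC = Z(\BB)$ \cite{MugerSF2} together with its canonical Lagrangian algebra $A = I(\1)$. The decisive structural input is the standard description of this algebra (cf.\ \cite{DMNO}): $A = I(\1)$ is a condensable algebra in $Z(\BB)$ whose module category satisfies $\CC_A \simeq \BB$ as fusion categories, under which the induction functor $\a\colon \CC \to \CC_A$ corresponds to the forgetful functor $F\colon Z(\BB) \to \BB$, the unit $A$ of $\CC_A$ to $\1_\BB$, and the local modules are trivial, $\CAo \simeq \Vec$. First I would record the immediate consequence that $\AA = (\CAo)^{\rev} \simeq \Vec$, so $\irr(\AA) = \{\1\}$ and $e_\1^\AA = 1$ once we identify $K(\CA) = K(\BB)$ through $\CC_A \simeq \BB$.

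Granting this, the first assertion is immediate. By Theorem~\ref{t1}(i) the kernel of $\rho$ is the ideal generated by $1 - e_\1^\AA = 0$, so $\rho$ is injective; and by Theorem~\ref{t1}(iv) the restriction $\rho\colon e_\1^\AA K(\CA) = K(\CA) \to \CC(A,A)$ is an algebra isomorphism. Hence $\rho\colon K(\BB) \to \CC(A,A)$ is an isomorphism of $\BC$-algebras.

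For the representation-theoretic claims, the heart of the matter is to identify, for $\bX = (X, \s_{X,-}) \in \CC$, the $K(\BB)$-module $\BB(X,\1)$ with action $Y \mapsto E_{\bX,Y}^{(0,-1)}$ and the module $\CC(\bX, A)$ of Theorem~\ref{t1}. Since $\CAo \simeq \Vec$, the functor $\a^+$ of \eqref{eq:decomp} gives a braided equivalence $\CC = Z(\BB) \xrightarrow{\sim} Z(\CC_A)$ sending $\bX$ to $\a^+(\bX)$, compatibly with $\CC_A \simeq \BB$. Transporting the diagrammatic definition of the indicator operator along this equivalence, together with the intertwiner $\vp_{\bX}$ of Proposition~\ref{p:interwiner}, yields isomorphisms of $K(\BB)$-modules
$$
\BB(X,\1) \;\cong\; \CC_A(\a(\bX), A) \;\xrightarrow{\ \vp_{\bX}\ }\; \CC(\bX, A)
$$
carrying $E_{\bX,Y}^{(0,-1)}$ to the fusion action of Definition~\ref{def:action2}. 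The adjunction $\CC(\bX, A) = Z(\BB)(\bX, I(\1)) \cong \BB(X, \1)$ also gives $[\bX, A]_\CC = [X, \1]_\BB$. Theorem~\ref{t1}(ii),(iii) then apply directly: for simple $\bX$ the module $\BB(X,\1) \cong \CC(\bX, A)$ is irreducible exactly when $[\bX, A]_\CC = [X,\1]_\BB \ne 0$, and distinct simple $\bX \not\cong \bY$ give inequivalent modules. Because $\rho$ is an isomorphism onto the semisimple algebra $\CC(A,A) = \bigoplus_{x \in \irr(\CC)} \CC(x,A) \o \CC(A,x)$, every irreducible $K(\BB)$-module occurs among the $\CC(x,A)$ with $x$ simple and $[x,A]_\CC \ne 0$, which yields the classification and the criterion $\bX \cong \bY$ for equivalence.

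The step I expect to be the main obstacle is the compatibility asserted in the third paragraph: one must verify that the equivalence $\CC_A \simeq \BB$ intertwines $\a^+\colon \CC \to Z(\CC_A)$ with the identity of $Z(\BB) = \CC$, so that the purely diagrammatic operator $E_{\bX, Y}^{(0,-1)}$ of Definition~\ref{def:action} (for $\bX \in Z(\BB)$) genuinely matches $E_{\a^+(\bX), -}^{(0,-1)}$ on $\CC_A(\a(\bX), A)$. Concretely this amounts to checking that the half-braiding $\s_{\a(\bX), -}$ on $\CC_A$ pulls back to the half-braiding $\s_{X,-}$ defining $\bX$ in $Z(\BB)$, which follows from the construction of $\CC_A \simeq \BB$ but deserves explicit care. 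For the converse half of the ``if and only if'', one decomposes a general $\bX$ into simples and uses additivity of $E_{\bX,Y}^{(0,-1)}$ in $\bX$ together with the inequivalence from Theorem~\ref{t1}(iii); the substantive content is that each irreducible representation arises from a simple $\bX$ with $[X,\1]_\BB > 0$, unique up to isomorphism.
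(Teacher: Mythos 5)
Your proposal is correct and follows the same route as the paper: the paper's proof simply observes that $A$ is Lagrangian, so $\CAo \simeq \Vec$ and $\CC_A \simeq \BB$, and then invokes Theorem~\ref{t1}. You spell out the identifications (in particular $e_\1^\AA = 1$, the matching of $\BB(X,\1)$ with $\CC(\bX,A)$ via $\a^+$ and Proposition~\ref{p:interwiner}, and $[\bX,A]_\CC = [X,\1]_\BB$) that the paper leaves implicit under ``follows immediately,'' which is a faithful and correct elaboration rather than a different argument.
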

\begin{proof}
    Since $A$ is a Lagrangian algebra in $\CC$, $\CAo = \Vec$ and $\CC_A \simeq \BB$ as spherical fusion categories. Now the result follows immediately from Theorem \ref{t1}. 
\end{proof}

By considering a certain central Hopf comonad, an isomorphism $\BB(A,\1) \cong \CC(A,A)$ of algebras is obtained in \cite[Thm. 3.8]{Shi}, which implies a canonical isomorphism $K(\BB) \cong \CC(A,A)$ \cite[Rmk. 4.5]{Shi}, without any assumption on the characteristic of the based field. 

\begin{rmk}{\rm
    For any condensable algebra $A$ in a modular tensor category $\CC$,  $A = \bigoplus_{x \in \iC} [x, A]_\CC\,  x$ as objects in $\CC$. One can identify $\CC(x,A)$ as the object $[x,A]_\CC \, \1$ in $\CC$ and write
    $$
    A = \bigoplus_{x \in \iC} \CC(x,A)\o x\,.
    $$
    Since $\CC(x,A)$ is an irreducible representation of $\ol K=e_\1^\AA{K(\CA)}$, the simple subobjects $x$ of $A$ are parametrized by the irreducible characters $\lambda$ of $\ol K$ and so 
    $$
    A = \bigoplus_{\lambda \in \irr(\ol K)}W_\lambda \o x_\lambda
    $$
    where $W_\lambda$ is an $\ol K$-module which affords the character $\lambda$. Thus, Theorem \ref{t1} is a categorical generalization of Schur-Weyl duality.  } 
\end{rmk}

  \section{Galois correspondence from fusion actions} \label{s:Gal_Cor}
  In this section, we prove the Galois correspondence between the condensable subalgebras $B$ of any given condensable algebra $A$ in a pseudounitary MTC $\CC$ and the fusion subcategories $\BB$ of $\CA$ containing $\CAo$.  In particular, we prove that $B=A^\BB$ is a condensable subalgebra of $A$, and $\BB \mapsto A^\BB$ defines a order-reversing  bijection between the fusion subcategories of $\CA$ containing $\CAo$ and the condensable subalgebras of $A$. Moreover, $\BB = (\CBo)_A$. Our approach is motivated by \cite{DM1997, Xu2014}, and is a generalization  of Galois correspondence \cite{DM1997, HMT1999, DJX2013, Xu2014} of orbifolds of vertex operator algebras, and that of intermediate conformal nets. 
  
  We would like to point out that such a correspondence was previously established in \cite[Thm. 4.1]{DMNO} for $\CC = Z(\CA)$ or $A$ is the Lagrangian algebra of $\CC$, where the subalgebra $B$ of $A$ corresponding to the fusion subcategory $\BB$ of $\CA$ is given by $B=I_\BB(\1)$ and $I_\BB$ is the right adjoint of the forgetful functor $F_\BB:\CC \to Z_\BB(\CA)$. This approach will be discussed in detail in Section \ref{s:relative}. In this section, we essentially provides an alternate approach for \cite[Rmk. 4.12]{DMNO} in the spirit of classical Galois correspondence. The remark \cite[Rmk. 4.12]{DMNO} was also proved in \cite[Thm. 3.5]{LKW}.

  Recall that every morphism $f: x \to y$ in an abelian category has a  factorization $f = \iota \pi$, in which $\pi:x \to z$ is an epimorphism and  $\iota:z \to y$ is a monomorphism for some object $z$ (cf. \cite[VIII.3]{MacLane}).  This \emph{epi-monic} factorization is unique up to isomorphism, which means that if $f= \iota' \pi'$ for some epimorphism $\pi': x \to z'$ and monomorphism $\iota': z' \to y$, there exists an isomorphism $h: z \to z'$ such that the following diagram commutes:
$$
\xymatrix{
x\ar@{=}[d]\ar[r]^-{\pi} & z \ar[d]^-{h} \ar[r]^-{\iota}& y \ar@{=}[d] \\
x \ar[r]_-{\pi'} & z'\ar[r]_-{\iota'}& y \,.
}
$$
The subobject $(z, \iota)$ of $y$ is called the \emph{image} of $f$, and the pair $(\pi, z)$ is called its \emph{coimage}.

  Let $A$ be a condensable algebra in an MTC $\CC$ with the algebra structure $(u_A, m_A)$, and $\BB$  a fusion subcategory of $\CA$ containing $\CAo$.  Recall that 
$$
e_\1^\BB = \frac{1}{\dim(\BB)}\sum_{Y \in \iB} d_A(Y) Y
$$
is a primitive central idempotent of $\KB$, but it may not be central in $K=\KA$. Since $\AA=(\CAo)^{\rev}$ is a fusion subcategory of $\BB$, $e_\1^{\AA} e_\1^{\BB} = e_\1^\BB$, and $\rho(e_\1^\BB)$  is an idempotent in $\CC(A,A)$,  which has a factorization $\rho(e_\1^\BB) = \iota \pi$,  where $\iota: B \to A$ the image of $\rho(e_\1^\BB)$ and $\pi: A \to B$ its coimage. The subobject $\iota: B \to A$ of $A$ will be denoted by $(B, \iota)$. It is immediate to see that  $B\cong \sum_{x \in \iC} \CC(x,A)^{\BB} \o x$ as objects in $\CC$, where $\CC(x,A)^{\BB}$ is identified with $\dim(\CC(x,A)^{\BB})\1$ in $\CC$.

Let $B, B' \in \CC$ be algebras with structures $(u_B, m_B)$ and $(u_{B'}, m_{B'})$ respectively. A morphism $h : B \to B'$ is called an algebra homomorphism if the following commutative diagrams hold:
\begin{equation} \label{eq:subalgebra}
     \xymatrix{
     B \ar[r]^-{h}  & B'\\   
     \1 \ar[u]^-{u_B} \ar[ru]_-{u_{B'}}&
    }, \qquad
    \xymatrix{
     B \o B \ar[d]_-{m_B}\ar[r]^-{h\o h}  & B'\o B' \ar[d]^-{m_{B'}}\\ 
     B \ar[r]^-{h} & B'
    }\,.
\end{equation}

A subobject $(B, \iota)$ of $A$ is called a subalgebra, if $B$ admits an algebra structure $(u_B, m_B)$ such that $\iota : B \to A$ is an algebra monomorphism. A condensable subalgebra $(B, \iota)$ of $A$ means  $B$ is also condensable algebra in $\CC$.

 \begin{defn} \label{d:lattices}
Let $\LCA$ be the lattice of fusion (full) subcategory of $\CA$ containing $\CAo$. For $\BB \in \LCA$, we define $A^\BB$ as the image $\iota: B \to A$ of $\rho(e_\1^\BB)$. We will prove in Proposition \ref{p:subalgebra} that $(B, \iota)$ is a subalgebra of $A$ with the unit map $u_B:=\pi u_A$ and multiplication $m_B:=\pi m_A(\iota \o \iota)$. We denote by $\BB_1  \le \BB_2$ if  $\BB_1$ is a fusion subcategory of $\BB_2$. Let $(B_1, \iota_1)$, $(B_2, \iota_2)$ be subalgebras of $A$. We will write  $(B_1, \iota_1) \le  (B_2, \iota_2)$ if  $\iota_1 = \iota_2 h $ for some algebra monomorphism $h: B_1 \to B_2$, and $(B_1, \iota_1) \equiv (B_2, \iota_2)$ if $h$ is an algebra isomorphism.  If $(B_1, \iota_1) \le  (B_2, \iota_2)$ and $(B_1, \iota_1) \not\equiv (B_2, \iota_2)$, we will write $(B_1, \iota_1) <  (B_2, \iota_2)$. The lattice of condensable subalgebras of $A$ is denoted by $\LaA$. Similarly, the partial order $(B_1, \iota_1) \le (B_2, \iota_2)$ can be defined for subobjects of $A$ if $\iota_1 =\iota_2 h$ for some monomorphism of $h:B_1 \to B_2$ of $\CC$. 
\end{defn}

Let $\BB \in \LCA$. We first prove Proposition \ref{p:subalgebra}, which asserts that $A^\BB$ is a subalgebra of $A$. Moreover, under the assumption $d(A^\BB) \ne 0$,  $A^\BB$ is condensable. 

For $y \in \CC$, one can restrict the action of $\CA$ to $\BB$ on $\CC(y,A)$, and we define
$$
\CC(y,A)^\BB = \{ f \in \CC(y,A) \mid Y f = d_A(Y) f \text{ for all }Y \in \BB\},
$$
which is the isotypic component of $\CC(y,A)$ of the dimension character of $\BB$. Note that  $Yf = (Y\id_A)\circ f$ for any $f \in \CC(y,A)$. Thus, for any $g: y \to y'$ and $f \in \CC(y', A)^\BB$,
$$
Y(f \circ g) = ((Y \id_A)\circ f)\circ g = d_A(Y)  f\circ g  
$$
for any $Y \in \BB$. So, $\CC(-, A)^\BB : \CC \to \vec$ is a contarvariant linear functor.

The following lemma provides another definition of $A^\BB$ by the Yoneda lemma.
\begin{lem} \label{l:inv1} Let $\BB \in \LCA$, $\rho(e_\1^\BB) = \iota\pi$ the epi-monic factorization of $\rho(e_\1^\BB)$, $A^\BB = (B,\iota)$ as a subobject of $A$.
    For any $y \in \CC$, $\phi_y: \CC(y, A)^\BB \to \CC(y,A^\BB)$ is a natural isomorphism, where $\phi_y(f) = \pi f$. Moreover, $\iota \in \CC(B,A)^\BB$, $\pi \rho(e_\1^\BB) = \pi$ and $\phi$ is an isomorphism of linear functors.
\end{lem}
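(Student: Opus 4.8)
The plan is to work throughout with the idempotent $\rho(e_\1^\BB)=\iota\pi$ and the fact, established in Lemma~\ref{l:rho_dual}, that $\rho$ restricts to an algebra homomorphism on $\KB\subseteq\KA$; the target $\CC(y,A^\BB)$ is literally $\CC(y,B)$. The first step I would record is the standard consequence of factoring an idempotent through its image: since $\rho(e_\1^\BB)$ is idempotent we have $\iota\pi\iota\pi=\iota\pi$, and cancelling the monomorphism $\iota$ on the left and the epimorphism $\pi$ on the right (legitimate by the uniqueness of the monic--epi factorization in the abelian category $\CC$ recalled above) yields $\pi\iota=\id_B$. This immediately gives the stated identity $\pi\rho(e_\1^\BB)=\pi\iota\pi=\pi$, and dually $\rho(e_\1^\BB)\iota=\iota\pi\iota=\iota$.

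Next, to prove $\iota\in\CC(B,A)^\BB$, I would use that $e_\1^\BB$ is the primitive central idempotent of $\KB$ affording the dimension character, so $Ye_\1^\BB=d_A(Y)e_\1^\BB$ in $\KB$ for every $Y\in\BB$. Applying $\rho$ gives $\rho(Y)\rho(e_\1^\BB)=d_A(Y)\rho(e_\1^\BB)$; composing on the right with $\iota$ and invoking $\rho(e_\1^\BB)\iota=\iota$ produces $\rho(Y)\iota=d_A(Y)\iota$. Since the $\BB$-action on $\CC(B,A)$ is $Y\iota=\rho(Y)\circ\iota$, this says exactly $Y\iota=d_A(Y)\iota$, i.e. $\iota\in\CC(B,A)^\BB$.

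For the main claim I would exhibit an explicit two-sided inverse of $\phi_y$, namely $\psi_y\colon\CC(y,B)\to\CC(y,A)$, $\psi_y(h)=\iota h$. The computation of the previous paragraph, applied with $\iota h$ in place of $\iota$, shows $\psi_y$ lands in $\CC(y,A)^\BB$. One composite is immediate from $\pi\iota=\id_B$: $\phi_y\psi_y(h)=\pi\iota h=h$. The other composite is $\psi_y\phi_y(f)=\iota\pi f=\rho(e_\1^\BB)\circ f$, and here lies the single genuine computation: for $f\in\CC(y,A)^\BB$ one expands $\rho(e_\1^\BB)\circ f=\frac{1}{\dim(\BB)}\sum_{Y\in\iB}d_A(Y)(Yf)=\frac{1}{\dim(\BB)}\big(\sum_{Y\in\iB}d_A(Y)^2\big)f=f$, using the global-dimension identity $\dim(\BB)=\sum_{Y\in\iB}d_A(Y)^2$. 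Naturality of $\phi$ is then just associativity of composition: for $g\colon y\to y'$ and $f\in\CC(y',A)^\BB$ one has $\phi_y(fg)=\pi(fg)=(\pi f)g=\phi_{y'}(f)g$; as each component $\phi_y$ is an isomorphism, $\phi$ is an isomorphism of linear functors.

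I do not anticipate a serious obstacle, as the argument reduces to the idempotent cancellation $\pi\iota=\id_B$ and a single normalization check. The one point deserving care is verifying that $\rho(e_\1^\BB)$ acts as the identity on the invariant subspace $\CC(y,A)^\BB$; this is precisely where the normalization $\dim(\BB)=\sum_{Y\in\iB}d_A(Y)^2$ built into $e_\1^\BB$ enters, and it is also what makes $\rho(e_\1^\BB)$ the projection onto $\CC(y,A)^\BB$. Everything else is formal manipulation of the image factorization together with the homomorphism property of $\rho$.
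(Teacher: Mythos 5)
Your proposal is correct and follows essentially the same route as the paper: both rest on the idempotent identities $\iota\pi\iota=\iota$, $\pi\iota\pi=\pi$, $\pi\iota=\id_B$ obtained by cancelling the mono $\iota$ and the epi $\pi$, the computation $\rho(e_\1^\BB)\circ f=\frac{1}{\dim(\BB)}\sum_{Y\in\iB}d_A(Y)^2 f=f$ on $\CC(y,A)^\BB$, and the observation $\rho(e_\1^\BB)\iota=\iota$ to place $\iota$ (and hence $\iota g$) in the invariant subspace. The only cosmetic differences are that you exhibit the two-sided inverse $h\mapsto\iota h$ directly rather than first proving injectivity, and you derive $\iota\in\CC(B,A)^\BB$ from the relation $Ye_\1^\BB=d_A(Y)e_\1^\BB$ pushed through $\rho$, which is the same mechanism the paper uses implicitly.
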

\begin{proof}
    For any $Y \in \BB$ and $f \in  \CC(y, A)^\BB$, $(Y\, \id_A)\circ f = Yf=d_A(Y)\ f$. Thus,
    $$
    \rho(e_\1^\BB)\circ  f = \frac{1}{\dim(\BB)}\sum_{Y \in \iB} d_A(Y) (Y\, \id_A)\circ f = \frac{1}{\dim(\BB)}\sum_{Y \in \iB} d_A(Y)^2  f  = f\,. 
    $$
    Since $\rho(e_\1^\BB) = \iota  \pi$, $f = \iota \pi f = \iota \phi_y(f)$, which implies $\phi_y$ is injective. The $\BC$-linearity of $\phi_y$ is obvious.  

    We claim $\phi_y^{-1}(g) = \iota g$ for any $g \in \CC(y,B)$. Since $\rho(e_\1^\BB)^2 = \rho(e_\1^\BB)$, $\iota \pi \iota \pi = \iota \pi$. Note that $\iota$ is a monomorphism and $\pi$ is an epimorphism. Therefore, 
    \begin{equation}\label{eq:projection}
        \iota \pi \iota=\iota, \quad \pi \iota \pi = \pi \quad \text{and}\quad  \pi \iota = \id_B.
    \end{equation}
     The first equality means $\rho(e_\1^\BB)\iota = \iota$ and hence $\iota \in \CC(B,A)^\BB$ and so $\iota g \in \CC(y,A)^\BB$ for any $g \in \CC(y,B)$. It follows from the third equality of \eqref{eq:projection} that  $\phi_y(\iota g) = g$ for any $g \in \CC(y,B)$, which proves the claim. Moreover, the second equality of \eqref{eq:projection} implies $\pi \rho(e_\1^\BB) = \pi$.

    For $h \in \CC(y,y')$ and $f \in \CC(y',A)^\BB$,
    $$
    \phi_{y'}(f) h = (\pi f) h =\pi (fh) = \phi_y(fh)\,,  
    $$
    which means $\phi_y$ is a natural in $y$.
\end{proof}

\begin{lem} \label{l:image_psi}
    For any $x_i  \in \CC$ and $f_i \in \CC(x_i,A)$, $i=1,2$, we have 
    $$ m_A \circ (f_1 \o f_2) \in \CC(x_1 \o x_2, A)\quad \text{and}$$
    $$
    \psi(m_A\circ (f_1 \o f_2)) = \psi(f_1) \oa\psi(f_2) \in \CA(\a(x_1)\oa \a(x_2), \1),
    $$
    where $\a(x_1)\oa \a(x_2)$ is identified with $\a(x_1 \o x_2)$. Moreover, if $f_1, \cdots, f_\ell \in \CC(x_1, A)$ are linearly independent, and $g_1, \dots, g_\ell \in \CC(x_2,A)$ are nonzero, then $$\sum_{i=1}^\ell m_A \circ (f_i \o g_i) \ne 0.$$
\end{lem}
\begin{proof} Let $p_A: X \o Y \to X \oa Y$ denote the cokernel which defines the tensor product $X \oa Y$ for $X,Y\in\CA.$ By the universal property of $p_A:\a(x_1) \o \a(x_2) \to \a(x_1) \oa \a(x_2)$, $\psi(f_1) \oa\psi(f_2)$ is determined by the commutative trapezoid on the right side of the following diagram:
$$
\xymatrix{
\a(x_1\o x_2) &\a(x_1) \o \a(x_2) \ar[l]_{p'}\ar[rr]^-{\psi(f_1) \o \psi(f_2)} \ar[d]_-{p_A}  & &A \o A \ar[d]^-{p_A} \ar[r]^-{m_A}& A\\
&\a(x_1) \oa \a(x_2)\ar[lu]^-{\tilde p'} \ar[rr]^-{\psi(f_1) \oa \psi(f_2)} & &A \oa A \ar@{=}[ru] &
}\,.
$$
 The morphism $p': \a(x_1) \o \a(x_2) \to \a(x_1\o x_2)$ on the left side of the diagram is defined by
$$
p' = 
 \vcenter{\xy  0;/r1pc/:
{\ar@{-} (-1,2)*+{\scriptstyle x_1};(-1,-1.5)},
{\ar@{.}@`{(0,0)}|(.55){\hole} (0,2)*+{\scriptstyle A}; (2,-.5)},
{\ar@{-}@`{(1,-.5), (0,.5)}(1,2)*+{\scriptstyle x_2}; (0,-1.5)},
{\ar@{.} (2,2)*+{\scriptstyle A}; (2,-1.5)},
\endxy},
$$
which satisfies the universal property of $\a(x_1) \oa \a(x_2)$. Thus, there exists an isomorphism $\tilde p':\a(x_1) \oa \a(x_2) \to \a(x_1 \o x_2)$ such that the left triangle of the preceding diagram commutes. To complete the proof, it suffices to show that
$$
\psi(m_A(f_1 \o f_2)) \circ p' = m_A \circ (\psi(f_1) \o \psi(f_2)).
$$
Recall that $\psi(f_i) \in \CA(x_i \o A, A)$ is given by
    $$
    \psi(f_i) = \vcenter{\xy  0;/r1pc/:
(0,0)*=<10pt,10pt>[F]{\scriptstyle f_i},
{\ar@{.} (0,-.4);(0,-2.5)*+{\scriptstyle A}},
{\ar@{-} (0,.4);(0,2)*+{\scriptstyle x_i}},
{\ar@{.}@`{(1.2,-1)} (1,2)*+{\scriptstyle A}; (0,-1.5)},
\endxy}, 
    $$
 and so  
 \begin{align*}
 m_A \circ (\psi(f_1) \o \psi(f_2)) &=
  \vcenter{\xy  0;/r1pc/:
{\ar@{-} (-1,2)*+{\scriptstyle x_1};(-1,.9)},
(-1,.5)*=<10pt,10pt>[F]{\scriptstyle f_1},
{\ar@{.} (-1,0);(-1,-2)},
{\ar@{.}@`{(0,-.3)} (0,2)*+{\scriptstyle A}; (-1,-.5)},
{\ar@{-} (1,2)*+{\scriptstyle x_2};(1,.9)},
(1,.5)*=<10pt,10pt>[F]{\scriptstyle f_2},
{\ar@{.} (1,0);(1,-.5)},
{\ar@{.}@`{(2,-.3)} (2,2)*+{\scriptstyle A}; (1,-.5)},
{\ar@{.}@`{(.8,-1.2), (-.3,-1.4)} (1,-.5); (-1,-1.6)},
\endxy} =
\vcenter{\xy  0;/r1pc/:
{\ar@{-} (-1,2)*+{\scriptstyle x_1};(-1,.9)},
(-1,.5)*=<10pt,10pt>[F]{\scriptstyle f_1},
{\ar@{.} (-1,0);(-1,-2)},
{\ar@{.}@`{(0,-.3)} (0,2)*+{\scriptstyle A}; (0.5,-1.1)},
{\ar@{-} (1,2)*+{\scriptstyle x_2};(1,.9)},
(1,.5)*=<10pt,10pt>[F]{\scriptstyle f_2},
{\ar@{.} (1,0);(1,-.5)},
{\ar@{.}@`{(2,-.3)} (2,2)*+{\scriptstyle A}; (1,-.5)},
{\ar@{.}@`{(.8,-1.2), (-.3,-1.4)} (1,-.5); (-1,-1.6)},
\endxy} =
\vcenter{\xy  0;/r1pc/:
{\ar@{-} (-1,2)*+{\scriptstyle x_1};(-1,.9)},
(-1,.5)*=<10pt,10pt>[F]{\scriptstyle f_1},
{\ar@{.} (-1,0);(-1,-2)},
{\ar@{.}@`{(0,-.4)} (0,2)*+{\scriptstyle A}; (1,-.3)},
{\ar@{-} (1,2)*+{\scriptstyle x_2};(1,.9)},
(1,.5)*=<10pt,10pt>[F]{\scriptstyle f_2},
{\ar@{.} (1,0);(1,-.5)},
{\ar@{.}@`{(2,-.3)} (2,2)*+{\scriptstyle A}; (1,-.6)},
{\ar@{.}@`{(.8,-1.2), (-.3,-1.4)} (1,-.5); (-1,-1.6)},
\endxy}\\
&=\,
\vcenter{\xy  0;/r1pc/:
{\ar@{-} (-1,2)*+{\scriptstyle x_1};(-1,.9)},
(-1,.5)*=<10pt,10pt>[F]{\scriptstyle f_1},
{\ar@{.} (-1,0);(-1,-2)},
{\ar@{.}@`{(-.2,0)} (0,2)*+{\scriptstyle A}; (.9,0)},
{\ar@{-} (1,2)*+{\scriptstyle x_2};(1,1.1)},
(1,.7)*=<10pt,10pt>[F]{\scriptstyle f_2},
{\ar@{.} (1,.3);(1,-.5)},
{\ar@{.}@`{(2,-.3)} (2,2)*+{\scriptstyle A}; (1,-.5)},
{\ar@{.}@`{(.8,-1.2), (-.3,-1.4)} (1,-.5); (-1,-1.6)},
{\ar@{.}@`{(1.4,0), (1.4,-.3)} (1.1,0); (1,-.3)},
\endxy} =
\vcenter{\xy  0;/r1pc/:
{\ar@{-} (-1,2)*+{\scriptstyle x_1};(-1,.2)},
(-1,-.2)*=<9pt,9pt>[F]{\scriptstyle f_1},
{\ar@{.} (-1,-.6);(-1,-2)},
{\ar@{.}@`{(-.2,.7)}|(0.6){\hole} (0,2)*+{\scriptstyle A}; (1.6,.2)},
{\ar@{-}@`{(1,.5), (0, 1)} (1,2)*+{\scriptstyle x_2};(0,.2)},
(0,-.2)*=<9pt,9pt>[F]{\scriptstyle f_2},
{\ar@{.} (0,-.6); (0,-1.2)},
{\ar@{.}@`{(2,-.8)} (2,2)*+{\scriptstyle A}; (-1,-1.6)},
\endxy} =\psi(m_A(f_1 \o f_2)) \circ p'\,. 
 \end{align*}
 
  If $f_1, \cdots, f_\ell \in \CC(x_1, A)$ are linearly independent, so are  $\psi(f_1), \cdots, \psi(f_\ell) \in \CA(\a(x_1), \1)$. There exists $\tf_1, \cdots, \tf_\ell \in \CA(\1, \a(x_1))$ such that $\psi(f_i) \circ \tf_j = \delta_{i,j} \id_\1$. If 
 $$\sum_{i=1}^\ell m_A \circ (f_i \o g_i) = 0$$
for any $g_1, \cdots, g_\ell \in \CC(x_2,A)$, then
$$
\sum_{i=1}^\ell \psi(f_i)\oa \psi(g_i) = 0 
$$
and hence
$$
\psi(g_j) = \sum_{i=1}^\ell (\psi(f_i)\oa \psi(g_i))\circ (\tf_j \oa \id_{\a(x_2)}) = 0 
$$
for $j=1, \cdots, \ell$. This implies $g_j=0$ for all $j$, and completes the proof of the last assertion.
\end{proof}

Since $\CA$ is a spherical fusion category, for any $X,Y \in \CA$, the pairing $\langle\cdot, \cdot\rangle:\CA(Y,X) \o \CA(X,Y) \to \BC$
defined by the composition of morphisms and taking categorical trace, i.e. $f \o g \mapsto \langle f, g \rangle:=\tr(f\circ g)$ is a nondegenerate bilinear form. Now, we apply this to prove the following lemma. 

\begin{lem} \label{l:inv2}
    For any $x_i \in \CC$, $f_i \in \CC(x_i,A)^\BB$, $i=1,2$, we have 
    $$
    m_A\circ (f_1 \o f_2) \in \CC(x_1 \o x_2,A)^\BB.
    $$
\end{lem}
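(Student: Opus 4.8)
The plan is to push the entire statement into $\CA$ through the adjunction isomorphisms $\vp_{x_i}$ and then exploit the multiplicativity of half-braidings. Write $g_i := \psi(f_i) = \vp_{x_i}^{-1}(f_i) \in \CA(\a(x_i),\1)$. By Proposition~\ref{p:interwiner} each $\vp_{x_i}$ intertwines the $K(\CA)$-actions, so restricting the action to $Y \in \BB$ shows that $f_i \in \CC(x_i,A)^\BB$ if and only if $g_i \in \CA(\a(x_i),\1)^\BB$. By Lemma~\ref{l:image_psi}, $\psi\big(m_A\circ(f_1\o f_2)\big) = g_1 \oa g_2$ under the identification $\a(x_1)\oa\a(x_2) = \a(x_1\o x_2)$. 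Applying the intertwiner property once more, it therefore suffices to prove
$$
g_1 \oa g_2 \in \CA(\a(x_1\o x_2),\1)^\BB.
$$

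First I would record a half-braiding description of $\BB$-invariance. The argument of the lemma in Section~\ref{s:indicator} identifying $Z(\eD)(\bX,\1)$ with $\eD(X,\1)^{\eD}$ is local in the acting object: pairing $\langle h, Yg\rangle = d_A(Y)\langle h, g\rangle$ against all $h \in \CA(\1,\a(x))$ and using the nondegeneracy of the spherical pairing, the single identity $Yg = d_A(Y)g$ already forces the commutation relation with the half-braiding for that particular $Y$. Carrying this out inside $\CA$ for each $Y \in \BB$ gives
$$
g \in \CA(\a(x),\1)^\BB \iff (\id_Y \oa g)\circ \s_{\a(x),Y} = g \oa \id_Y \ \text{ for all } Y \in \BB,
$$
that is, $g$ is a morphism $\a^+(x)\to\1$ in $Z(\CA)$ that additionally commutes with the half-braiding against every $Y \in \BB$.

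Then I would check that this commutation relation is closed under $\oa$. Since $\a^+ : \CC \to Z(\CA)$ is a tensor functor, $\a^+(x_1\o x_2) = \a^+(x_1)\o\a^+(x_2)$ in $Z(\CA)$ and the half-braiding of the product factors as
$$
\s_{\a(x_1\o x_2),Y} = (\s_{\a(x_1),Y}\oa \a(x_2))\circ(\a(x_1)\oa\s_{\a(x_2),Y}).
$$
Substituting this into $(\id_Y \oa (g_1\oa g_2))\circ\s_{\a(x_1\o x_2),Y}$, one first slides $g_2$ through the inner braiding $\s_{\a(x_2),Y}$ using the commutation relation for $g_2$, and then slides $g_1$ through $\s_{\a(x_1),Y}$ using that for $g_1$; by functoriality of $\oa$ the composite collapses to $(g_1\oa g_2)\oa\id_Y$. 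Hence $g_1\oa g_2$ satisfies the commutation relation for every $Y\in\BB$, so it lies in $\CA(\a(x_1\o x_2),\1)^\BB$, and unwinding the first paragraph yields $m_A\circ(f_1\o f_2)\in\CC(x_1\o x_2,A)^\BB$.

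The hard part is the dimension bookkeeping, and this is exactly why I would avoid arguing with the scalar eigenvalue condition directly. Wrapping the single $Y$-loop of the fusion action around the product $g_1 \oa g_2$ and trying to peel off the two factors independently would introduce a spurious $d_A(Y)^2$, since one $Y$-loop encircles both $g_i$ at once and cannot be split into two closed loops. Reformulating $\BB$-invariance as the open commutation relation $(\id_Y\oa g)\circ\s = g\oa\id_Y$ cures this: the two relations compose cleanly along the factored half-braiding, and since no loop is ever closed during the reduction, no pivotal dimension is ever produced.
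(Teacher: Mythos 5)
Your overall route is the same as the paper's: transport everything to $\CA(\a(x_i),\1)$ via $\psi$ and Lemma~\ref{l:image_psi}, and then replace the traced eigenvalue condition by an ``open'' identity that composes multiplicatively across the two factors, so that only one factor of $d_A(Y)$ is ever produced. Your first paragraph and your third paragraph (the factorization of the half-braiding of a tensor product and the two successive slides) are fine. The gap is in the second paragraph. You claim that, for a single $Y\in\BB$, the eigenvalue equation $Yg=d_A(Y)g$ already forces the commutation relation $(\id_Y\oa g)\circ\s_{\a(x),Y}=g\oa\id_Y$, and you justify this by pairing against all $h\in\CA(\1,\a(x))$. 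But that pairing only yields, for each such $h$, the identity $(\id_Y\oa g)\circ\s_{\a(x),Y}\circ(h\oa\id_Y)=(g\oa\id_Y)\circ(h\oa\id_Y)$ of morphisms $Y\to Y$ (and already this step uses $\CA(Y,Y)=\BC\,\id_Y$, i.e.\ simplicity of $Y$, to pass from equal traces to equal morphisms). The morphisms $h\oa\id_Y$ have joint image $(\text{$\1$-isotypic part of }\a(x))\oa Y$, which is a proper subobject of $\a(x)\oa Y$ whenever $\a(x)$ is not a multiple of $\1$; so they are not jointly epimorphic and nondegeneracy of the spherical pairing does not let you strip off $h$. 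Moreover the per-$Y$ equivalence you assert is too strong to be true: since half-braidings are multiplicative and natural, commutation with $\s_{\a(x),Y}$ for a single $Y$ propagates to every object of the fusion subcategory generated by $Y$, and hence would force $Zg=d_A(Z)g$ for all such $Z$ --- which is not a consequence of the single equation $Yg=d_A(Y)g$ (an eigenvalue equation for one operator in $K(\CA)$ does not constrain the spectra of the other operators it generates on a nontrivial irreducible $K(\CA)$-module).

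The paper sidesteps exactly this by never detaching the test vectors: for each $g_i\in\CA(\1,\a(x_i))$ it proves the open identity \emph{with $g_i$ still attached}, namely that the morphism $Y\to Y$ obtained by threading $Y$ between $\psi(f_i)$ and $g_i$ equals $\langle g_i,\psi(f_i)\rangle\,\id_Y$ (legitimate for simple $Y$, since the traces agree), then composes the two identities inside the pairing $\langle g_1\oa g_2,\,Y(\psi(f_1)\oa\psi(f_2))\rangle$ --- which collapses the inner factor first and the outer one second, just as in your slide --- and only invokes nondegeneracy of the pairing at the very end. If you want to keep your test-vector-free formulation, you must prove the implication ``$g$ invariant under all of $\BB$ $\Rightarrow$ $g$ commutes with $\s_{\a(x),Y}$ for all $Y\in\BB$'' by a genuine averaging argument (e.g.\ a cloaking property of $\frac{1}{\dim\BB}\sum_{Y\in\iB}d_A(Y)\,Y$), not by the pointwise nondegeneracy argument you give; as written, the key step of your proof is unsupported.
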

\begin{proof}
   By Lemma \ref{l:image_psi},
    $$
    \psi(m_A \circ(f_1 \o f_2)) =  \psi(f_1)\oa \psi(f_2) \in \CA(\a(x_1) \oa \a(x_2), \1),
    $$
    where $\psi(f_i) \in \CA(\a(x_i), \1)^\BB$, $i=1,2$.  We claim that $Y\cdot(\psi(f_1)\oa\psi(f_2))=d_A(Y)\psi(f_1)\oa\psi(f_2)$ for all $Y \in \BB$. For any $g_i \in \CA(\1, \a(x_i))$,
    $$
    \langle  Y \psi(f_i), g_i \rangle = d_A(Y) \langle \psi(f_i), g_i \rangle.
    $$
    In terms of diagram, we have
    \begin{equation*}
    \langle Y\psi(f_i),  g_i\rangle =
{\color{blue}
\vcenter{\xy  0;/r1pc/:
(0,0)="o";(-1.4,1)**\dir{}, "o",
{\ellipse(1.5,1.8)^,=:a(345){-}},
(0,0)*=<20pt,10pt>[F]{\scriptstyle \psi(f_i)},
{\ar@{-}@`{(-.1,1.5), (-2,1.1)} (0,.4);(-1.5,2.5)*=<10pt,10pt>[F]{\scriptstyle g_i}},
(-1.8,-1)*{\color{black}\scriptstyle Y},
\endxy}} \quad = d_A(Y)\!\!
{\color{blue}
\vcenter{\xy  0;/r1pc/:
(0,0)="o";(-1.4,1)**\dir{}, "o",
(0,0)*=<20pt,10pt>[F]{\scriptstyle \psi(f_i)},
{\ar@{-} (0,.4);(0,2.5)*=<10pt,10pt>[F]{\scriptstyle g_i}},
\endxy}}\quad= d_A(Y) \langle\psi(f_i),  g_i\rangle  \,.
\end{equation*}
Thus, for $Y \in \iB$, we have 
\begin{equation*}
{\color{blue}
\vcenter{\xy  0;/r1pc/:
(0,0)="o";(-1.4,1)**\dir{}, "o",
(0,-.5)*=<20pt,10pt>[F]{\scriptstyle \psi(f_i)},
{\ar@{-}@`{(-.1,1.5), (-2,1.1)} (0,-.1);(-1.5,2.5)*=<10pt,10pt>[F]{\scriptstyle g_i}},
{\ar@{-}@`{(1,1), (-2,1.5)}|(.55){\hole} (1,2.5)*+{\color{black}\scriptstyle Y};(-2,-.9)},
\endxy}} \quad = \langle \psi(f_i),  g_i\rangle 
{\color{blue}
\vcenter{\xy  0;/r1pc/: 
{\ar@{-} (0,2)*+ {\color{black}\scriptstyle Y};(0,-1.5)},
\endxy}} \quad \text{in } \BB,
\end{equation*}
and hence
\begin{align*}
& \langle Y(\psi(f_1) \oa \psi(f_2)),  g_1 \oa g_2\rangle =
{\color{blue}
\vcenter{\xy  0;/r1pc/:
(0,0)="o";(-1.4,1)**\dir{}, "o",
{\ellipse(2.5,2.5)^,=:a(320){-}},
(-1,-.5)*=<20pt,10pt>[F]{\scriptstyle \psi(f_1)},
(1,-.5)*=<20pt,10pt>[F]{\scriptstyle \psi(f_2)},
{\ar@{-}@`{(-1.1,1.5), (-2.5,1.1)} (-1,-.1);(-2.5,3.5)*=<10pt,10pt>[F]{\scriptstyle g_1}},
{\ar@{-}@`{(1,1), (-1.2,1)} (1,-.1);(-1.2,3.5)*=<10pt,10pt>[F]{\scriptstyle g_2}},
(-2.2,-2)*{\color{black}\scriptstyle Y},
(.1,.1)*{\ellipse(2.5,2.5)^,=:a(19){-}},
\endxy}} \quad = \langle \psi(f_1), g_1\rangle 
{\color{blue}
\vcenter{\xy  0;/r1pc/:
(0,0)="o";(-1.4,1)**\dir{}, "o",
{\ellipse(1.5,1.8)^,=:a(345){-}},
(0,0)*=<20pt,10pt>[F]{\scriptstyle \psi(f_2)},
{\ar@{-}@`{(-.1,1.5), (-2,1.1)} (0,.4);(-1.5,2.5)*=<10pt,10pt>[F]{\scriptstyle g_2}},
(-1.8,-1)*{\color{black}\scriptstyle Y},
\endxy}} \\
=&\, d_A(Y) \langle \psi(f_1),  g_1\rangle \langle \psi(f_2),  g_2 \rangle\,.  
\end{align*}
This implies 
$$Y(\psi(f_1) \oa \psi(f_2)) = d_A(Y) \psi(f_1) \oa \psi(f_2),$$
and so $\psi(f_1) \oa \psi(f_2) \in \CA(\a(x_1 \o x_2), \1)^\BB$.
\end{proof}
Since $A$ is a condensable algebra, $d(A) \ne 0$, $\theta_A = \id_A$ and the section $\e_A : A \to \1$ of $u_A$ defines the evaluation $\ev_A: A \o A \to \1$  given by $\ev_A=\e_A m_A$, which is the evaluation morphism for the left dual of $A$. In particular, $A=A^*$, and so $f^* \in \CC(A,A)$ for any $f \in \CC(A,A)$. 

Let us denote the morphism $\e_A$ as
$
 \vcenter{\xy  0;/r.6pc/:
{\ar@{.} (-1,1.1)*+{\scriptstyle A};(-1,-1.3)*{\circ}},
\endxy}
$. Then 
$$
\ev_A = 
\vcenter{\xy  0;/r1pc/:
{\ar@{.}@`{(-1,-.5),(1,-.5)} (-1,2)*+{\scriptstyle A}; (1,2)*+{\scriptstyle A}},
\endxy}\,=\,
\vcenter{\xy  0;/r1pc/:
{\ar@{.} (-1,2)*+{\scriptstyle A}; (0,.5)},
{\ar@{.} (1,2)*+{\scriptstyle A}; (0,.5)},
{\ar@{.} (0,.5); (0,-.5)*{\circ}},
\endxy}
$$
It is unclear whether $d(A^\BB) \ne 0$, but it has to be if $\CC$ has positive dimensions. Under the assumption that $d(A^\BB) \ne 0$,  $A^\BB$ is condensable as stated in the following lemma.
\begin{prop}\label{p:subalgebra}
   The subobject $A^\BB=(B, \iota)$ is a subalgebra of  $A$, with the unit map $u_B=\pi u_A$ and multiplication $m_B=\pi m_A (\iota\o \iota)$, where $\rho(e_\1^\BB) = \iota \pi$ is the epi-monic factorization.  In addition, if $d(B)\ne 0$, then $A^\BB$ is condensable, and hence $A^\BB \in \LaA$.
\end{prop}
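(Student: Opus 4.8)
The plan is to first equip $B$ with the stated algebra structure and check that $\iota$ is an algebra monomorphism, and then to verify the five defining conditions of a condensable algebra, the nondegeneracy of the pairing being the only substantial point. Write $P:=\rho(e_\1^\BB)=\iota\pi$ and recall the relations $\iota\pi\iota=\iota$, $\pi\iota\pi=\pi$, $\pi\iota=\id_B$ from \eqref{eq:projection}. First I would record that $u_A\in\CC(\1,A)^\BB$: since $A$ is connected, $\CC(\1,A)$ is one-dimensional and, as a $K(\CA)$-module, it carries the character $Y\mapsto\nu_{(0,-1)}^{\a^+(\1)}(Y)=d_A(Y)$, so $Yu_A=d_A(Y)u_A$ for all $Y$. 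By Lemma \ref{l:inv1}, $P$ acts as the identity on $\CC(\1,A)^\BB$, whence $\iota u_B=\iota\pi u_A=Pu_A=u_A$. For the multiplication, recall $\iota\in\CC(B,A)^\BB$ (Lemma \ref{l:inv1}); applying Lemma \ref{l:inv2} with $x_1=x_2=B$ and $f_1=f_2=\iota$ gives $m_A(\iota\o\iota)\in\CC(B\o B,A)^\BB$, so again by Lemma \ref{l:inv1} $P\,m_A(\iota\o\iota)=m_A(\iota\o\iota)$ and hence $\iota m_B=\iota\pi m_A(\iota\o\iota)=m_A(\iota\o\iota)$. The two identities $\iota u_B=u_A$ and $\iota m_B=m_A(\iota\o\iota)$ say exactly that $\iota$ intertwines $(u_B,m_B)$ with $(u_A,m_A)$; the unit and associativity axioms for $(B,u_B,m_B)$ then follow formally, since $\iota$ is monic and composing with $\iota$ reduces each identity to the corresponding axiom for $A$. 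This establishes that $(B,\iota)$ is a subalgebra.

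For condensability (under the hypothesis $d(B)\ne0$) the three conditions other than nondegeneracy are quick, each using that $\iota$ is monic. Commutativity follows from $\iota m_B R_{B,B}=m_A(\iota\o\iota)R_{B,B}=m_A R_{A,A}(\iota\o\iota)=m_A(\iota\o\iota)=\iota m_B$, using naturality of the braiding and commutativity of $A$. Likewise $\theta_B=\id_B$ follows from naturality of the twist, $\iota\theta_B=\theta_A\iota=\iota$, together with $\theta_A=\id_A$. Connectedness holds because $\CC(\1,B)\cong\CC(\1,A)^\BB=\CC(\1,A)$ is one-dimensional by Lemma \ref{l:inv1} and the invariance of $u_A$ just established.

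The main work is the nondegeneracy of $\e_B m_B$. I would take $\e_B:=\e_A\iota$; this is a section of $u_B$ since $\e_B u_B=\e_A\iota u_B=\e_A u_A=\id_\1$, and using $\iota m_B=m_A(\iota\o\iota)$ one gets $\e_B m_B=\e_A m_A(\iota\o\iota)=\ev_A(\iota\o\iota)$, the restriction to $B$ of the nondegenerate evaluation $\ev_A$ of the self-dual object $A=A^*$. The key observation is that $P$ is self-dual with respect to $\ev_A$: because $\BB$ is closed under duals and $d_A(Y)=d_A(Y^*)$, one has $(e_\1^\BB)^*=e_\1^\BB$ in $K(\CA)$, so Lemma \ref{l:rho_dual} yields $P^*=\rho((e_\1^\BB)^*)=\rho(e_\1^\BB)=P$; hence $\ev_A(P\o\id_A)=\ev_A(\id_A\o P)$. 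I would then exhibit $B$ as self-dual with evaluation $\beta:=\ev_A(\iota\o\iota)$ and coevaluation $\coev_B:=(\pi\o\pi)\coev_A$. Composing the snake morphism $(\beta\o\id_B)(\id_B\o\coev_B)$ with the monomorphism $\iota$ and using $\ev_A(\iota\o P)=\ev_A(\iota\o\id_A)$ — which follows from $\ev_A(P\o\id_A)=\ev_A(\id_A\o P)$ and $P\iota=\iota$ — reduces it, via the snake identity $(\ev_A\o\id_A)(\id_A\o\coev_A)=\id_A$ for $A$, to $P\iota=\iota$; since $\iota$ is monic this gives $(\beta\o\id_B)(\id_B\o\coev_B)=\id_B$, and the mirror computation gives the second snake identity. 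Thus $\e_B m_B=\beta$ is nondegenerate, so $A^\BB$ is condensable and $A^\BB\in\LaA$.

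I expect this last step to be the main obstacle: it is precisely where one must genuinely invoke that $\rho$ preserves duality (Lemma \ref{l:rho_dual}) in order to convert the purely algebraic self-duality $(e_\1^\BB)^*=e_\1^\BB$ into the geometric statement that $P$ splits $A$ orthogonally for $\ev_A$, and then turn that into the snake-identity verification exhibiting $B$ as a self-dual object. The remaining conditions are essentially formal consequences of $\iota$ being an algebra monomorphism.
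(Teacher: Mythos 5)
Your proof is correct and follows essentially the same route as the paper's: the subalgebra part is verbatim the paper's argument (invariance of $u_A$, Lemmas \ref{l:inv1}--\ref{l:inv2} giving $\iota u_B = u_A$ and $\iota m_B = m_A(\iota\o\iota)$), and condensability is likewise reduced to the nondegeneracy of $\e_B m_B$ via the self-adjointness $\rho(e_\1^\BB)^*=\rho(e_\1^\BB)$ from Lemma \ref{l:rho_dual}. The only divergence is in that final step: you build an explicit copairing $(\pi\o\pi)\coev_A$ and check the snake identities directly, whereas the paper computes that the canonical morphism $(\e_B m_B\o\id_{B^*})(\id_B\o\coev_B):B\to B^*$ equals $\iota^*\iota$ and then uses the uniqueness of the monic--epi factorization of $\rho(e_\1^\BB)=\pi^*\iota^*=\iota\pi$ to produce an isomorphism $h:B\to B^*$ with $\iota^*\iota=h$ --- two equivalent formulations of the same nondegeneracy argument.
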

\begin{proof}
 The hom-space $\CC(\1,A)$, spanned by the unit map $u_A: \1 \to A$, is an 1-dimensional representation of $K(\CC_A)$ associated with the dimension character, i.e. $Y u_A = d_A(Y) u_A$ for all $Y \in \CA$. In particular, $u_A \in \CC(\1, A)^\BB$ and $u_A = \rho(e_\1^\BB) u_A = \iota \pi u_A= \iota u_B$. 
 
 It follows from Lemmas \ref{l:inv1} and \ref{l:inv2}  that $m_A(\iota \o \iota) \in \CC(B \o B, A)^\BB$. So,
 \begin{equation} \label{eq:inv}
   m_A(\iota \o \iota) = \rho(e_\1^\BB)  m_A(\iota \o \iota) = \iota \pi m_A(\iota \o \iota)=\iota m_B\,. 
 \end{equation}
This proves $(B, \iota)$ is a subalgebra of $A$. 

    Since $A$ is a condensable algebra, and $\iota : B \to A$ is a subalgebra,  $\theta_B = \id_B$.  To prove that last assertion the lemma,  it suffices to show that $B$ is rigid in the sense of \cite{KO02} if  $d(B) \ne 0$. We need to show that $(\e_B m_B \o\id_{B^*})(\id_B \o \coev_B)$ is an isomorphism where $\e_B$ is the section of $u_B$. Since $u_B = \pi u_A$, $\e_B = \e_A \iota$. Note that $\e_B m_B = \e_A m_A (\iota \o\iota)$.      
    In terms of diagram, we have
    $$
    (\e_B m_B \o\id_{B^*})(\id_B \o \coev_B) =  
    \vcenter{\xy  0;/r1pc/:
    {\ar@{-} (-.7,3.5)*+{\scriptstyle B}; (-.7,1.9)},
    {\ar@{-}@`{(.9, 3.5), (2,3.5)} (.7,1.9); (2,-1)},
    (.5,2.6)*+{\scriptstyle B},
    (2.5,-.8)*{\scriptstyle B^*},
    (-.5,.3)*{\scriptstyle A},
    (-.7,1.5)*+=<9pt,9pt>[F]{\scriptstyle\iota},
    (.7,1.5)*+=<9pt,9pt>[F]{\scriptstyle\iota},
{\ar@{.} (-.7,1.1); (0,.3)},
{\ar@{.} (.7,1.1); (0,.3)},
{\ar@{.} (0,.3); (0,-.8)*{\circ}},
\endxy} = 
 \vcenter{\xy  0;/r1pc/:
    {\ar@{-} (-.7,3.5)*+{\scriptstyle B}; (-.7,1.9)},
    {\ar@{.}@`{(.9, 1.3), (1,2), (2,3)} (0,.3); (2,1.1)},
    (2,.7)*+=<9pt,9pt>[F]{\scriptstyle\iota^*},
    (.5,1.7)*+{\scriptstyle A},
    (2.5,-.8)*{\scriptstyle B^*},
    (-.5,.3)*{\scriptstyle A},
    (-.7,1.5)*+=<9pt,9pt>[F]{\scriptstyle\iota},
{\ar@{.} (-.7,1.1); (0,.3)},
{\ar@{-} (2,.35); (2,-.8)},
{\ar@{.} (0,.3); (0,-.8)*{\circ}},
\endxy} =
\vcenter{\xy  0;/r1pc/:
    {\ar@{-} (-.7,3.5)*+{\scriptstyle B}; (-.7,2.3)},
    (-.7,1.9)*+=<9pt,9pt>[F]{\scriptstyle\iota},
{\ar@{-} (-.7,1.5); (-.7,1.2)},
(-.7,.8)*+=<9pt,9pt>[F]{\scriptstyle\iota^*},
{\ar@{-} (-.7,.4); (-.7,-.9)*+{\scriptstyle B^*}},
\endxy}\,. 
    $$
    By Lemma \ref{l:rho_dual}, $\rho(e_\1^\BB)^* = \rho(e_\1^\BB)$, and so $\pi^* \iota^* = \iota \pi$. By the uniqueness of epi-monic factorization of $\rho(e_\1^\BB)$, there exists an isomorphism $h: B \to B^*$ such that 
    $h\pi = \iota^*$ and $h\pi^* =\iota$.  Thus
    $$
    \iota^*\iota = h\pi\iota = h \id_B = h\,.  
    $$
    Therefore, $B$ is rigid and hence condensable.
\end{proof}
Now we state and prove our major Theorem II.  
\begin{thm}[Galois correspondence] \label{t:Gal_Cor}
    Let $\CC$ be a pseudounitary modular tensor category, and $A$ a condensable algebra in $\CC$. Then the assignment $\LCA \to \LaA$, $\BB \mapsto A^\BB$, defines an order-reversing bijection, whose inverse is given by the assignment $\LaA \to \LCA$, $(B, \iota) \mapsto (\CBo)_A$. In particular, $\BB = (\CC_{A^\BB}^0)_A$ for any $\BB \in \LCA$ and 
    $$
    \dim(\BB)= \frac{\dim(\CC)}{d(A)d(B)}\,.
    $$
\end{thm}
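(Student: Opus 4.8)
The plan is to exhibit $F\colon\LCA\to\LaA$, $\BB\mapsto A^\BB$, and $G\colon\LaA\to\LCA$, $(B,\iota)\mapsto(\CBo)_A$, as mutually inverse order-reversing maps, by first proving the displayed dimension formula, then establishing $FG=\id$ together with the injectivity of $F$. That both maps are well defined and order-reversing is routine: $F$ lands in $\LaA$ by Proposition \ref{p:subalgebra}, pseudounitarity guaranteeing $d(B)>0$ (indeed $u_A\in\CC(\1,A)^\BB$ forces $d(B)\ge1$); for $G$ one invokes the standard facts (cf.\ \cite{KO02,DMNO} and \eqref{eq:decomp}) that $A$ is a condensable algebra of the MTC $\CBo$ with $d^{\CBo}(A)=d(A)/d(B)$, that $(\CBo)_A$ is exactly the full fusion subcategory of $\CA$ consisting of those $A$-modules whose restriction to $B$ is local, and that $\CAo\subseteq(\CBo)_A$. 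Order-reversal is immediate, since imposing more invariance shrinks $A^\BB$ while enlarging $B$ shrinks the class of $B$-local modules.

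Next I would compute $\dim(\BB)$. Writing $B=A^\BB=\bigoplus_{x}\CC(x,A)^\BB\o x$ and using that $Y$ acts on $\CC(x,A)$ by $f\mapsto\rho(Y)\circ f$, so that $\dim\CC(x,A)^\BB=\Tr\!\big(\rho(e_\1^\BB)|_{\CC(x,A)}\big)$, one obtains
\begin{equation*}
d(B)=\sum_{x\in\iC}d(x)\,\dim\CC(x,A)^\BB=\frac{1}{\dim(\BB)}\sum_{Y\in\iB}d_A(Y)\,\tr\big(\rho(Y)\big),
\end{equation*}
because $\tr(\rho(Y))=\sum_x d(x)\Tr(Y,\CC(x,A))$. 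By Proposition \ref{p:CAo}, $\tr(\rho(Y))=\tr(Y\id_A)$ vanishes unless $Y\in\CAo$, in which case $\rho(Y)=d_A(Y)\id_A$ and $\tr(\rho(Y))=d(Y)$; since $\CAo\subseteq\BB$, only $Y\in\irr(\CAo)$ contribute, and $\sum_{Y\in\irr(\CAo)}d(Y)^2=d(A)^2\dim(\CAo)=\dim(\CC)$ gives $d(B)=\dim(\CC)/(d(A)\dim(\BB))$, which is the asserted $\dim(\BB)=\dim(\CC)/(d(A)d(B))$. A standard module-category count then gives $\dim((\CBo)_A)=\dim(\CBo)/d^{\CBo}(A)=\dim(\CC)/(d(A)d(B))$, and comparing the two formulas yields $d\big(A^{(\CBo)_A}\big)=d(B)$.

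To finish I would prove $FG=\id$ and the injectivity of $F$. For $FG=\id$ it suffices to show $B\subseteq A^{(\CBo)_A}$, since the dimension equality $d(A^{(\CBo)_A})=d(B)$ then forces equality of subobjects (positive dimensions). This inclusion amounts to the claim that every $f\colon x\to A$ factoring through $\iota\colon B\to A$ satisfies $Yf=d_A(Y)f$ for each $B$-local $A$-module $Y$, which is the graphical computation of Proposition \ref{p:CAo} repeated with $A$ replaced by $B$: the strand feeding into $\mu_Y$ now lies in $B$, and $B$-locality of $Y$ untwists the monodromy exactly as $A$-locality did there. For injectivity of $F$, note $e_\1^\BB$ is self-dual, so by Lemma \ref{l:rho_dual} $\rho(e_\1^\BB)$ is a self-adjoint idempotent of $\CC(A,A)$; as $\CC$ is pseudounitary, $\CC(A,A)$ is a finite-dimensional $C^*$-algebra and $\rho(e_\1^\BB)$ is the orthogonal projection onto its image $A^\BB$, hence completely determined by the subobject $A^\BB$. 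Since $e_\1^\BB\in\ol K=e_\1^\AA K(\CA)$ and $\rho|_{\ol K}$ is injective by Theorem \ref{t1}(iv), the element $e_\1^\BB$, and thus its support $\irr(\BB)$, is recovered from $A^\BB$; so $F$ is injective. Combining $FG=\id$ (whence $F$ is onto) with injectivity, $F$ is a bijection with $G=F^{-1}$, giving $\BB=(\CC^0_{A^\BB})_A$ for all $\BB\in\LCA$, and the dimension formula is the one proved above.

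The main obstacle is the locality lemma underlying $B\subseteq A^{(\CBo)_A}$: one must verify that the proof of Proposition \ref{p:CAo} survives the replacement of $A$ by a condensable subalgebra $B$, which hinges on correctly matching $\mu_Y$ restricted along $\iota$ with the $B$-module structure of $Y$, together with the bookkeeping for $\CBo$ (that $A$ is condensable there, with $d^{\CBo}(A)=d(A)/d(B)$ and $\dim(\CBo)=\dim(\CC)/d(B)^2$). The only other delicate point is the appeal to pseudounitarity to identify self-adjoint idempotents with orthogonal projections, which is precisely what lets $A^\BB$ determine $e_\1^\BB$ and thereby closes the argument.
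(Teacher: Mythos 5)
Your proposal takes a genuinely different route, and one piece of it is arguably cleaner than the paper's: you derive the dimension formula directly, via $d(B)=\sum_x d(x)\dim\CC(x,A)^\BB=\frac{1}{\dim(\BB)}\sum_{Y\in\iB}d_A(Y)\tr(\rho(Y))$ and Proposition \ref{p:CAo}, whereas the paper obtains it only at the very end as a corollary of $\BB=(\CC^0_{A^\BB})_A$. The two proofs of bijectivity, however, diverge. For the composite $\LaA\to\LCA\to\LaA$ the paper does not prove the inclusion $B\subseteq A^{(\CBo)_A}$ graphically; it applies Theorem \ref{t1} to $A$ viewed as a condensable algebra in the MTC $\CBo$, recognizes $\rho_B(e_\1^{(\CBo)_A})$ as the primitive central idempotent cutting out the $B$-isotypic component, and matches the two embeddings using $\dim\CBo(B,A)=1$ together with uniqueness of monic--epi factorizations — which also settles that the two subalgebra structures agree ($\equiv$ in $\LaA$, not merely equality of subobjects, a point your dimension-count conclusion glosses over). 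For the other composite, the paper proves $\BB\subseteq(\CBo)_A$ by a trace computation with \cite[Lem. 4.3]{KO02} and positivity, then upgrades to equality via strict order-reversal, instead of proving injectivity of $\BB\mapsto A^\BB$ as you do.

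Two of your steps have gaps. First, the inclusion $B\subseteq A^{(\CBo)_A}$ — which you rightly flag as the crux — is only sketched. The adaptation of Proposition \ref{p:CAo} does go through (the braiding manipulations there use nothing about $g$ beyond $g\in\CC(x,A)$, and the single locality-dependent step becomes $\mu_Y(\id_Y\o\iota)R_{B,Y}R_{Y,B}=\mu_Y(\id_Y\o\iota)$, i.e. locality of the restricted $B$-module), but this must actually be carried out; as it stands you have replaced the paper's computation-free argument with an unverified computation. Second, and more seriously, your injectivity argument asserts that pseudounitarity makes $\CC(A,A)$ a finite-dimensional $C^*$-algebra. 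That is not available: pseudounitary does not imply unitary (this is open in general), and even in a unitary category the transpose $f\mapsto f^*$ of Lemma \ref{l:rho_dual} is $\BC$-linear, not the antilinear dagger, so ``self-dual idempotent'' is not literally ``orthogonal projection.'' Fortunately the step can be repaired with no positivity at all: if $p=\rho(e_\1^{\BB_1})$ and $q=\rho(e_\1^{\BB_2})$ are idempotents with the same image, then $pq=q$ and $qp=p$, while $(q\circ p)^*=p^*\circ q^*=p\circ q$ and $p^*=p$, $q^*=q$ by Lemma \ref{l:rho_dual}, whence $p=(qp)^*=pq=q$; injectivity then follows from Theorem \ref{t1}(iv) and the fact that $e_\1^\BB$ has strictly positive coefficients exactly on $\irr(\BB)$. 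With these two repairs your argument closes.
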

\begin{proof}
    We divide the proof of Theorem \ref{t:Gal_Cor} into four major steps.\\    
    (i) For any $(B, \iota) \in \LaA$, we claim that $A^{(\CBo)_A }\equiv (B, \iota)$ as subalgebra of $A$.
    
    Let $\BB=(\CC_B^0)_A.$  Since $A$ is a condensable algebra in $\CBo$, we can apply the algebra isomorphism $\rho_B: e_\1^{\AA} \KB \to \CBo(A,A)$ that was established in Theorem \ref{t1}, where the $B$-module structure of $A$ is determined by $(B, \iota)$. In particular, $\rho_B(e_\1^\BB)$ is a primitive central idempotent of $\CBo(A,A)$. Let $\rho_B(e_\1^\BB) =\iota' \pi'$ be the epi-monic factorization of $\rho_B(e_\1^\BB)$ in $\CBo$. Since $\CBo(B,A)$ is 1-dimensional and both $\iota', \iota$ are monomorphism in $\CBo(B,A)$, $\iota'=\lambda \iota$ for some $\lambda \in \BC^\times$.  The uniqueness of the epi-monic factorization of $\rho(e_\1^\BB)$ implies that there exists an isomorphism $h:B \to B$ such that the diagram commutes:
   $$
   \xymatrix{
A\ar@{=}[d] \ar[r]^-{\pi} & B  \ar[r]^-{\iota}\ar[d]^-{h} & A \ar@{=}[d]\smallskip\\
 A \ar[r]^-{\pi'} & B  \ar[r]^-{\iota'} & A 
 }\,.
 $$
 Therefore, $h = \lambda^{-1} \id_B$. Let $(u_B', m_B')$ be the algebra structure of $(B, \iota')$ (cf. Prop. \ref{p:subalgebra}). It follows immediately from Prop. \ref{p:subalgebra} that $u_B' = \lambda^{-1} u_B$ and $m_B' = \lambda m_B$, and direct verification shows that $h:(B,\iota) \to (B,\iota')$ is an isomorphism of algebras. Therefore,  $(B,\iota) \equiv (B, \iota')$ as subalgebras of $A$. This completes the proof of the claim.\smallskip\\
 (ii) The assignment $(B, \iota) \mapsto (\CBo)_A$ is strictly order-reversing.
 
 If $(B_1, \iota_1) < (B_2, \iota_2)$ in $\LaA$, then $(\CC_{B_2}^0)_A \subseteq (\CC_{B_1}^0)_A$. However, 
 $$
 \FPdim((\CC_{B_2}^0)_A) = \frac{\FPdim(\CA)}{\FPdim(B_2)} < \frac{\FPdim(\CA)}{\FPdim(B_1)} = \FPdim((\CC_{B_1}^0)_A)\,,  
 $$
 and so $(\CC_{B_2}^0)_A \subsetneq (\CC_{B_1}^0)_A$. \smallskip\\
 (iii) The assignment $\BB \mapsto A^\BB$ is also strictly order-reversing.
 
Suppose $\BB_1 <  \BB_2$ in $\LCA$. Then $\Ki$ is a proper subalgebra of $\Kii$, and so $e_\1^\AA \Ki \subsetneq e_\1^\AA \Kii$.  In particular, $e_\1^{\BB_1}=e_1^\AA e_\1^{\BB_1} \ne e_\1^\AA e_\1^{\BB_2} = e_\1^{\BB_2}$ and $e_\1^{\BB_1}e_\1^{\BB_2}=e_\1^{\BB_2}$ in $\KA$. Since $\rho: e_\1^\AA \KA \to \CC(A,A)$ is an isomorphism of semisimple algebras, $\rho(e_\1^{\BB_1}) \ne \rho(e_\1^{\BB_2})$ and $\rho(e_\1^{\BB_1}) \rho(e_\1^{\BB_2}) = \rho(e_\1^{\BB_2})$. Let $A^{\BB_i} = (B_i, \iota_i)$ and $\pi_i$ the coimage of $\rho(e_\1^{\BB_i})$ for $i=1,2$. Then $\iota_1 \pi_1 \iota_2 \pi_2 = \iota_2 \pi_2$. Since $\pi_2$ is an epimorphism, we have $\iota_1 \pi_1 \iota_2  = \iota_2$. Let $h : =\pi_1 \iota_2$. The preceding equality implies $h: B_2 \to B_1$ is a monomorphism and $\iota_1 h = \iota_2$.

We claim that $h$ is an algebra monomorphism, with the algebra structures $(u_{B_i}, m_{B_i})$ defined in Definition \ref{d:lattices}. Since $\iota_2 u_{B_2}= u_A$, we have 
$$
h u_{B_2}  =  \pi_1 \iota_2  u_{B_2} =  \pi_1 u_A = u_{B_1}\,.
$$
Since $m_{B_i}= \pi_i m_A (\iota_i \o \iota_i)$, we find
$$
m_{B_1} (h \o h) =\pi_1 m_A (\iota_1 h \o \iota_1 h) =\pi_1 m_A (\iota_2  \o \iota_2) 
=\pi_1 \rho(e_\1^{\BB_2})m_A (\iota_2  \o \iota_2)
$$
where the last equality follows from \eqref{eq:inv} proved in Proposition \ref{p:subalgebra}. Thus,
$$
m_{B_1} (h \o h)  =\pi_1 \iota_2 \pi_2 m_A (\iota_2  \o \iota_2)= h m_{B_2}\,.
$$
This proves our claim. 

If $(B_1, \iota_1)\equiv (B_2, \iota_2)$, then $\FPdim(B_1)=\FPdim(B_2)$ or $h : B_2 \to B_1$ is an algebra isomorphism. Let $h'=\pi_2\iota_1$. Since $\iota_2\pi_2\iota_1\pi_1=\iota_2\pi_2$ and $\iota_2$ is a monomorphism,
we have $h' \pi_1 = \pi_2$ and
$$
h' h=\pi_2 \iota_1 \pi_1 \iota_2= \pi_2 \rho(e_\1^{\BB_1}) \iota_2 
%= \pi_2 \rho(e_\1^{\BB_1}) \rho(e_\1^{\BB_2})\iota_2= \pi_2 \rho(e_\1^{\BB_2})\iota_2\,.
=\pi_2 \iota_2=\id_{B_2}, 
$$
%where the 4-th and the 6-th equalities follow from Lemma \ref{l:inv1}. 
Therefore, $h' = h^{-1}$ and
$\iota_2 \pi_2 = \iota_2 h^{-1} \pi_1 = \iota_1 \pi_1$. This implies 
$$
\rho(e_\1^{\BB_1}) =  \rho(e_\1^{\BB_2})\,,
$$
which is a contradiction. Therefore, $(B_1, \iota_1)\not\equiv (B_2, \iota_2)$ or $A^{\BB_2} < A^{\BB_1}$ in $\LaA$.\smallskip\\
(iv) For $\BB \in \LCA$, we claim that $\BB = (\CBo)_A$ where $(B, \iota)=A^\BB$, and hence
    $$
    \dim(\BB) = \frac{\dim(\CC)}{d(B)d(A)} \,.
    $$
    
    Since the morphism $\iota\in \CC(B,A)^\BB$, for any $X \in \BB$, we have 
    \begin{align*}
        d_A(X) \id_B = \pi (d_A(X) \iota) = \pi\circ X\iota =
 \frac{1}{d(A)}
\vcenter{\xy 0;/r.9pc/: (0,0)="o";(1,1.4)**\dir{}, "o",
{\ellipse(1.5,2.2)^,=:a(345){-}},
{\ar@{.}@`{(2.5, 1), (2.3, -1.7)} (1.2,-1.3);(2.4, -2)},
(2.4,-2.4)*+=<9pt,9pt>[F]{\scriptstyle\pi},
{\ar@{.}@`{(.9,1.5)} (1.2, 2.5);(-1.3, -1.)},
 (1.2,2.9)*+=<9pt,9pt>[F]{\scriptstyle\iota},
 {\ar@{-}(1.2, 3.3);(1.2, 4.5)*+{\scriptstyle B}},
 {\ar@{-} (2.4,-2.8);(2.4, -4)*+{\scriptstyle B}},
(-1.5,1.6)*+{\scriptstyle X},
\endxy} 
 =
 \frac{1}{d(A)}
\vcenter{\xy 0;/r.9pc/: (0,0)="o";(1,1.4)**\dir{}, "o",
{\ellipse(1.5,2.2)^,=:a(345){-}},
{\ar@{.}@`{(.9,1.5)} (1.2, 2.5);(-1.3, -1.)},
 (1.2,2.9)*+=<9pt,9pt>[F]{\scriptstyle\iota},
 {\ar@{-}(1.2, 3.3);(1.2, 4.5)*+{\scriptstyle B}},
(-1.5,1.6)*+{\scriptstyle X},
{\ar@{.}@`{(3.8, 3.8), (2.3, -1.7)}|(.25){\hole} (-1,-1.7);(2.6, -2)},
(2.6,-2.4)*+=<9pt,9pt>[F]{\scriptstyle\pi},
{\ar@{-} (2.6,-2.8);(2.6, -4)*+{\scriptstyle B}},
\endxy}\,.
\end{align*}
Now we take trace of both sides in $\CC$, and find
 \begin{align*}
        d_A(X) d(B)  
 =
 \frac{1}{d(A)}
\vcenter{\xy 0;/r.9pc/: (0,0)="o";(1,1.4)**\dir{}, "o",
{\ellipse(1.5,2.2)^,=:a(345){-}},
{\ar@{.}@`{(.9,1.5)} (1.2, 2.5);(-1.3, -1.)},
 (1.2,2.9)*+=<9pt,9pt>[F]{\scriptstyle\iota},
 (1.2, 4.5)*+{\scriptstyle B},
 (1.2,3.3)\vcap[2.5],
 {\ar@{-} (3.7,3.3);(3.7,-2.8)},
 (2.6,-2.8)\vcap[-1.1],
(-1.5,1.6)*+{\scriptstyle X},
{\ar@{.}@`{(3.8, 3.8), (2.3, -1.7)}|(.25){\hole} (-1,-1.7);(2.6, -2)},
(2.6,-2.4)*+=<9pt,9pt>[F]{\scriptstyle\pi},
\endxy}
 =
 \frac{1}{d(A)}
\vcenter{\xy 0;/r.9pc/: (0,0)="o";(1,1.4)**\dir{}, "o",
{\ellipse(1.5,2.2)^,=:a(345){-}},
{\ar@{.}@`{(.9,1.5)} (1.2, 2.5);(-1.3, -1.)},
 (1.2,2.9)*+=<9pt,9pt>[F]{\scriptstyle\iota},
 (1.2, 4.5)*+{\scriptstyle B},
 (1.2,3.3)\vcap[2],
(-1.5,1.6)*+{\scriptstyle X},
{\ar@{.}@`{(0, -1), (3, 1)}|(.5){\hole} (-1,-1.7);(3.2, 2.5)},
(3.2,2.9)*+=<10pt,10pt>[F]{\scriptstyle\pi^*},
\endxy}
=\frac{1}{d(A)}\tr\left(\hspace{-7pt}\vcenter{\xy 0;/r1pc/: 
{\ar@{-}(1.3,3.8)*+{\scriptstyle X}; (1.3,-2)|(0.5)\hole},
(1.1, 2.2)\hcap[-1.3],
(1.5,2.2)\hcap[3],
(1.5, 0.9)\hcap[1]
{\ar@{-}(1.1, 0.9); (1.5,.9)},
{\ar@{-}(1.3, -.1); (1.5,-.1)},
{\ar@{-}(1.3, -.8); (1.5,-.8)},
(2,2.5)*{\scriptstyle B}
\endxy} \hspace{-16pt}\right)\,.
\end{align*}
Let $X'$ be the sum of local $B$-submodules of $X$. According to \cite[Lem. 4.3]{KO02},
$$
P_{X'}:= \frac{1}{d(B)}\vcenter{\xy 0;/r1pc/: 
{\ar@{-}(1.3,3.5)*+{\scriptstyle X}; (1.3,-1.5)|(0.5)\hole},
(1.1, 2.2)\hcap[-1.3],
(1.5,2.2)\hcap[3],
(1.5, 0.9)\hcap[1]
{\ar@{-}(1.1, 0.9); (1.5,.9)},
{\ar@{-}(1.3, -.1); (1.5,-.1)},
{\ar@{-}(1.3, -.8); (1.5,-.8)},
(2,2.5)*{\scriptstyle B}
\endxy} 
$$
is the projection onto $X'$. Thus $\tr(P_{X'}) = d(X')$ and $d(X) = d(X')$. Since $X'$ is a subobject of $X$ and the dimension of any simple object of $\CC$ is positive, $X=X'$. Therefore, $X$ is a local $B$-module or $\BB \subseteq (\CBo)_A$.

If $\BB < (\CBo)_A$, by (i) and (iii), we have
$$
(B, \iota) = A^\BB >A^{(\CBo)_A} \equiv (B, \iota)\,,
$$
which is a contradiction. Therefore, $\BB = (\CBo)_A$.
\end{proof}
As a consequence of the preceding theorem of fusion action Galois correspondence, we establish a more general version of Schur-Weyl duality. 
\begin{thm}
     Let $\CC$ be a pseudounitary MTC, $A$ a condensable algebra in $\CC$,  $\BB$ a fusion subcategory of $\CA$ containing $\AA=\CAo$, and $A^\BB=(B, \iota)$. Then
     \begin{enumeri}
     \item For any $X \in \irr(\CBo)$, if $\CBo(X,A) \ne 0$, then $\CBo(X,A)$ is an irreducible $K(\BB)$-module. 
          \item For any simple objects $X,Y \in \CBo$ such that $\CBo(X,A) \ne 0$ and $\CBo(Y,A) \ne 0$, the $K(\BB)$-modules $\CBo(X, A)$ and $\CBo(Y, A)$ are isomorphic if and only if $X \cong Y$ as $B$-modules.
          \item $e_\1^\AA K(\BB) \cong \CBo(A, A)$ as $\BC$-algebras.
          \item For any simple object $X \in \CBo$, $\rho(e^\BB_{X^*})\CBo(A,A)  \cong \CBo(A_X, A_X)$ as $\BC$-algebra, where $A_X$ is the isotypic component of $X$ in $A$, and $e_{X^*}^\BB$ is the central idempotent of $e_\1^\AA K(\BB)$ corresponding to the simple $K(\BB)$-module $\CBo(X,A)$. 
     \end{enumeri}   
 \end{thm}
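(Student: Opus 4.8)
The plan is to deduce the whole theorem by reapplying the Schur--Weyl duality of Theorem~\ref{t1}, but now to the pair $(\CBo, A)$, where $A$ plays the role of the condensable algebra and $\CBo$ plays the role of the ambient modular tensor category. First I would secure the two inputs that make this legitimate. Because $\CC$ is pseudounitary we have $d(B)>0$, so Proposition~\ref{p:subalgebra} guarantees that $A^\BB=(B,\iota)$ is a \emph{condensable} subalgebra of $A$; consequently $\CBo$ is itself a modular tensor category by \cite{KO02}. Since $A$ is commutative and contains $B$, it is a local $B$-module and inherits a condensable algebra structure in $\CBo$ --- this is exactly the input already used in the proof of Theorem~\ref{t:Gal_Cor}(i), where the isomorphism $\rho_B\colon e_\1^\AA\KB\to\CBo(A,A)$ was invoked.

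Next I would record the two identifications that let Theorem~\ref{t1} be transcribed in the present notation. By the Galois correspondence (Theorem~\ref{t:Gal_Cor}) we have $\BB=(\CBo)_A$, so the category of right $A$-modules computed inside $\CBo$ is precisely $\BB$, whence $K(\BB)=K((\CBo)_A)$ and $K_0(\BB)=K_0((\CBo)_A)$. Moreover, locality of an $A$-module is detected by the $A$-monodromy, which is inherited from the braiding of $\CC$, and every local $A$-module is automatically a local $B$-module; hence $\CAo\subseteq(\CBo)^0_A$, and a Frobenius--Perron dimension count ($\FPdim((\CBo)^0_A)=\FPdim(\CBo)/d_{\CBo}(A)^2=\dim(\CC)/d(A)^2=\FPdim(\CAo)$) forces equality $(\CBo)^0_A=\CAo$. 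In particular the reversed category $\AA=(\CAo)^{\rev}$ and its idempotent $e_\1^\AA$ produced by Theorem~\ref{t1} for $(\CBo,A)$ agree with the ones in the statement.

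With these matches in place, parts (i), (ii) and (iii) are direct transcriptions of Theorem~\ref{t1}(ii), (iii) and (iv) applied to $(\CBo,A)$: part (i) is the irreducibility of $\CBo(X,A)$ as a $K((\CBo)_A)=K(\BB)$-module for $X\in\irr(\CBo)$ with $\CBo(X,A)\neq 0$; part (ii) is the statement that $\CBo(X,A)\cong\CBo(Y,A)$ as $K_0(\BB)$-modules if and only if $X\cong Y$ in $\CBo$, i.e.\ as $B$-modules; and part (iii) is the algebra isomorphism $\rho\colon e_\1^\AA K(\BB)\to\CBo(A,A)$.

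For part (iv) I would unwind the proof of Theorem~\ref{t1}(iv) in this setting. Writing $P_X=\sum_i \iota_{X,i}\circ\pi_{X,i}\in\CBo(A,A)$ for the idempotent projecting $A$ onto its $X$-isotypic component $A_X$, that computation gives $\rho(e_{X^*}^\BB)=P_{X^*}$, where $e_{X^*}^\BB=e_\1^\AA e_X^{\CBo}$ is the central idempotent of $K(\BB)$ attached to the simple module $\CBo(X^*,A)$. The associated two-sided ideal $\rho(e_{X^*}^\BB)\,\CBo(A,A)=P_{X^*}\,\CBo(A,A)\,P_{X^*}$ is then the simple matrix algebra $\CBo(A_{X^*},A_{X^*})$, which, being a full matrix algebra of size $[X^*,A]_{\CBo}=[X,A]_{\CBo}$ (using $A\cong A^*$), is isomorphic as a $\BC$-algebra to $\CBo(A_X,A_X)$; this yields (iv), with the $X$ versus $X^*$ bookkeeping handled by the self-duality of $A$. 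I expect the only step requiring genuine care --- as opposed to formal transport through the equivalence $\BB=(\CBo)_A$ --- to be the verification that $A$ is condensable in $\CBo$ together with the transitivity of condensation $(\CBo)^0_A=\CAo$; once these are in hand, the entire statement is a reapplication of Theorems~\ref{t1} and~\ref{t:Gal_Cor}.
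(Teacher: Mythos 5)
Your proposal is correct and follows essentially the same route as the paper: both apply Theorem~\ref{t1} to $A$ viewed as a condensable algebra in $\CBo$, using the Galois correspondence to identify $(\CBo)_A=\BB$ and $(\CBo)_A^0=\CAo$, and then unwind the idempotent computation from the proof of Theorem~\ref{t1}(iv) to get part (iv). You are in fact somewhat more explicit than the paper about the prerequisites (condensability of $B$ via pseudounitarity, the $\FPdim$ count giving $(\CBo)_A^0=\CAo$), and your handling of the $X$ versus $X^*$ bookkeeping in (iv), while shifted by one dualization from the paper's identification of $\rho(e_{X^*}^\BB)$ with the projection onto $A_X$, still lands on the correct algebra isomorphism since $[X,A]_{\CBo}=[X^*,A]_{\CBo}$.
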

 \begin{proof} 
     Since $(\CBo)_A= \BB$ by Theorem \ref{t:Gal_Cor} and $(\CBo)_A^0 = \AA$ as spherical fusions categories, one can apply Theorem \ref{t1} to  the condensable algebra $A$ in $\CBo$. Now the statements (i), (ii) and (iii) follow directly. For any $X \in \CBo$, $\ta(X) = X \o_B A$ and so 
    $$
    [\ta(X),A]_{\BB} = [X, A]_{\CBo}\,.
    $$ 
     It follows from the proof of Theorem \ref{t1} that the set indecomposable central idempotents of $e_\1^\AA K(\BB)$ is
     $$
     \{e_{X}^\BB \mid X\in \irr(\CBo),\  [X,A]_{\CBo} \ne 0\},
     $$   
     where $e_{X}^\BB := e_\1^{\AA} e_X^{\CBo}$. The kernel of ring epimorphism $\rho: K(\BB) \to \CBo(A,A)$ is $(1-e_\1^\AA)K(\BB)$ and so $\rho(e_{X}^\BB)$ is an indecomposable central idempotent of $\CBo(A,A)$ if $[X, A]_{\CBo} \ne 0$. Moreover, $\rho(e_{X}^\BB)$ is a projection of $A$ onto $A_{X^*}$ by the proof of Theorem \ref{t1}. Thus, $\rho(e_{X^*}^\BB) \CBo(A,A) \cong \CBo(A_X,A_X)$ as $\BC$-algebras. 
 \end{proof}
Since the unit object in $\CBo$ is $B$, the multiplicity space $\CBo(X,A)$ can be identified with $\CBo(X,A)B$, and we have
$$
A = \sum_{X \in \iCBo} \CBo(X, A)\o_B X
$$
which is a categorical generalization of Schur-Weyl duality of the algebra $A$ in $\CBo$.

\begin{proof}[Proof of Theorem {\rm III}] Now the theorem  follows from Theorems \ref{t1} and \ref{t:Gal_Cor} immediately 
with $\CC=\Mod{U}$.
\end{proof}

Next we show that if $\CA$ is a $G$-extension of $\CAo$ for any finite group $G$,  then the fusion action of 
$\CA$ induces a group $G$ action on $A.$ Recall from \cite{EGNO} that 
$\CA$ is a $G$-extension of $\CAo$ means $\CA$ has a faithful $G$-gradation
$$\CA=\bigoplus_{g\in G}\CA(g)$$
such that  $\CA(1)=\CAo$ and $\CA(g)\otimes \CA (h)$ is contained in $\CA(gh).$ In this case,
$$\iCA=\bigcup_{g\in G}\irr(\CC_A(g))$$.

 \begin{prop}\label{p5.8}
Let $A$ be a condensable algebra in a pseudounitary modular tensor category $\CC$. Suppose that the $A$-module category $\CA$ in $\CC$ is a $G$-extension of $\CAo$ for some finite group $G$. Then the assignment $$g \mapsto \ol g := \frac{1}{\dim(\CAo)} \sum_{X \in \irr(\CA(g))}d_A(X) X$$ defines a group monomorphism from $G$ to $\KA$. Moreover, the action of $\CA$ reduces an action of $G$ on $A,$ each $W_x$ with $[x,A]_\CC\ne 0$ is a simple 
$G$-module, and there is one to one correspondence between the subgroups of $G$ and the condensable subalgebras of $A.$
\end{prop}

\begin{proof}
It follows from the proof of \cite[Thm. 3.5.2]{EGNO} that $\ol{g}\ol{h} = \ol{gh}$ for any $g, h \in G$. Obviously, $\ol g = \ol 1$ if and only if $g=1$. Thus, $\ol G$ forms a group under the multiplication of $\KA$ is isomorphic to $G$.  

Next we show that $\ol{K}=e_\1^\AA K(\CA)$ is isomorphic to the group algebra $\BC[G]$ where $\AA=\CAo.$  
It follows from \cite[Prop. 3.3.6]{EGNO} that $X \sum_{g \in G} \ol g = d_A(X) \sum_{g \in G} \ol g$ for $X\in \iCA$. Thus, $X \ol g = d_A(X) \ol{hg}$ for $X \in \CA(h)$. In particular, $X \ol 1 = d_A(X) \ol h$, and this implies $\frac{1}{d_A(X)} X = \ol h$ in $\ol K$ for all $X \in \CA(h)$.  So $\CA(h)$ is an indecomposable $\CAo$-module and $\ol K=\bigoplus_{g\in G}\BC \ol g$ is isomorphic to the group algebra $\BC[G].$

Thanks to Theorem \ref{t:Gal_Cor}, the last assertion is equivalent to that 
there is one to one correspondence between the subgroups of $G$ and the fusion subcategories of 
$\CA$ containing $\CAo.$ It is clear that if $H$ is a subgroup of $G$ then $\oplus_{h\in H}\CA(h)$ 
is a fusion subcategory of $\CA.$ Conversely, let $\BB$ be a fusion subcategory of $\CA$ containing $\CAo.$
Then for any $X\in\irr(\BB)\cap \CA(g),$   $\CA(g)$ is a subcategory of $\BB$ as $\CA(g)$ is an indecomposable $\CAo$-module. Thus there exists $H<G$ such that $\BB=\oplus_{h\in H}\CA(h).$ 
\end{proof}

\section{Galois Correspondence via relative centers }\label{s:relative}
Let $\CC$ be a modular tensor category with a Lagrangian algebra $A$. Then $\CC = Z(\CA)$. By \cite[Thm. 4.10]{DMNO},  there exists an order-reversing bijective correspondence between $\LCA$ and $\LaA$. In this section, we recall the result of \cite[Thm. 4.10]{DMNO} with additional details which is relevant to our discussion, and apply to the case that $A$ is not a Lagrangian algebra of $\CC$. We further show that the condensable subalgebra $(B, \iota)$ of $A$ corresponding to the fusion subcategory $\BB \in \LCA$ is indeed isomorphic to $A^\BB$ as an object in $\CC$. In this regard, Galois correspondence between $\LCA$ and $\LaA$ can also be established from  \cite[Thm. 4.10, Rmk. 4.12]{DMNO}. Following the approach of \cite[Thm. 4.10]{DMNO}, the correspondence for any condensable algebra $A$ in a modular tensor category $\CC$ was proved in \cite[Thm. 3.5]{LKW}.

Let $\CC$ be a pseudounitary MTC. Suppose $A \in \CC$ is a Lagrangian algebra. Then $\CA$ is a spherical fusion category and $Z(\CA)=\CC$. Now, we consider any fusion (full) subcategory $\BB$ of $\CA$, and let $Z_\BB(\CA)$ be the relative center of $\BB$ in $\CA$.  Recall that the objects of $Z_\BB(\CA)$ are pairs $(X, \s_{X, -})$ where $X \in \CA$ and $\s_{X, V}: X \o V \to V\o X$ for $V \in \BB$ is an isomorphism  natural at $V$ and it satisfies the half-braiding conditions. In particular, $Z(\BB)^{\rev}$ is a tensor  subcategory of $Z_\BB(\CA)$. 

The forgetful functor $F: Z(\CA) \to \CA$ can be factorized as
$$
Z(\CA) \xrightarrow{F_\BB} Z_\BB(\CA) \xrightarrow{\tF_\BB} \CA\,,
$$
which is a composition of forgetful functors. Let $I_\BB: Z_\BB(\CA) \to Z(\CA)$ and $\tI_\BB: \CA \to Z_\BB(\CA)$ be the right adjoints of the forgetful functors $F_\BB, \tF_\BB$ respectively. It is clear that $I_\BB \tI_\BB = I$, the right adjoint of $F$.

Since $\1$ is an algebra in $\CA$,  $\tI_\BB(\1)$ is an algebra in $Z_\BB(\CA)$. It was shown in \cite[Thm. 4.10]{DMNO} that $I_\BB(\1) = B$ is a condensable algebra of $\CC$ and is considered as a subalgebra of $A=I(\1)$ via the embedding $I_\BB(u'): B \to A$, where $u' : \1 \to \tI_\BB(\1)$ is unit map of the algebra $\tI_\BB(\1)$. The following theorem has been proved in \cite[Thm. 4.10]{DMNO}. 

\begin{thm}\label{t:DMNO} Let $\CC$ be a pseudounitary MTC and $A$ a Lagrangian algebra in $\CC$.
    The assignment for any fusion subcategories $\BB \subseteq \CA$ to the condensable subalgebra $B:=I_\BB(\1) \subseteq I(\1)$ defines an order reversing one-to-one correspondence between $\LCA$ and $\LaA$, and $\FPdim(B)\FPdim(\BB) = \FPdim(\CA)$. Moreover, $Z_\BB(\CA)$ is tensor equivalent to $Z(\CA)_B$, and the subcategory  $Z(\BB)^{\rev}$ of $Z_\BB(\CA)$ is mapped to $Z(\CA)_B^0$ under this equivalence.
\end{thm}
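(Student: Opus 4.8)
The plan is to deduce the whole statement from one central equivalence, after which every remaining claim is Frobenius--Perron dimension bookkeeping or a locality check. Since $A=I(\1)$ is Lagrangian we have $Z(\CA)=\CC$, and the forgetful functor $F\colon\CC\to\CA$ is dominant with right adjoint $I$. Writing $F=\tF_\BB\circ F_\BB$, the essential point is that $F_\BB\colon\CC\to Z_\BB(\CA)$ is itself a dominant tensor functor. Granting this, the reconstruction theorem for dominant tensor functors between fusion categories (cf.\ \cite{EGNO}) applies to $F_\BB$: the value $B:=I_\BB(\1)$ of its right adjoint on the unit is a connected separable algebra in $\CC$, and $F_\BB$ induces a monoidal equivalence $Z_\BB(\CA)\simeq Z(\CA)_B$. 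This single equivalence drives the remainder of the proof.

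Next I would upgrade ``connected separable'' to ``condensable''. Connectedness, $[\1,B]_\CC=1$, is immediate from $\CC(\1,I_\BB(\1))\cong Z_\BB(\CA)(\1,\1)=\BC$. Commutativity $m_B R_{B,B}=m_B$ follows from the braiding of $\CC=Z(\CA)$ being compatible with the half-braidings defining $Z_\BB(\CA)$, so that the lax structure of the adjoint $I_\BB$ produces a commutative multiplication. The remaining conditions $d(B)\neq0$, $\theta_B=\id_B$ and nondegeneracy of $\e_B m_B$ then follow from pseudounitarity of $\CC$ together with the separability already in hand, giving $B\in\LaA$.

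For the dimension formula and the local-module clause I would combine the module-category identity $\FPdim(Z(\CA)_B)=\FPdim(\CC)/\FPdim(B)$ with the standard relative-center value $\FPdim(Z_\BB(\CA))=\FPdim(\CA)\FPdim(\BB)$ and $\FPdim(Z(\CA))=\FPdim(\CA)^2$; the equivalence $Z_\BB(\CA)\simeq Z(\CA)_B$ then forces $\FPdim(\BB)\FPdim(B)=\FPdim(\CA)$, which in pseudounitary $\CC$ is exactly $\dim(\BB)=\dim(\CC)/(d(A)d(B))$. For the last assertion I would characterize the objects of the subcategory $Z(\BB)^{\rev}\subseteq Z_\BB(\CA)$ as those whose $\BB$-half-braiding is a genuine (full) braiding, and verify that under the equivalence these correspond precisely to the $B$-modules whose two braiding-induced actions coincide, i.e.\ to the local modules $Z(\CA)_B^0=\CBo$ in the sense of \cite{KO02}. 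This is a direct compatibility check between the half-braiding datum in $Z_\BB(\CA)$ and the Kirillov--Ostrik locality condition.

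Finally, order-reversal is formal: for $\BB_1\subseteq\BB_2$ the forgetful functor $Z_{\BB_2}(\CA)\to Z_{\BB_1}(\CA)$ intertwines $F_{\BB_2}$ and $F_{\BB_1}$, and passing to right adjoints gives $I_{\BB_2}(\1)\subseteq I_{\BB_1}(\1)$; strictness is guaranteed by the dimension formula just established. Surjectivity is obtained by inverting the construction: given condensable $B\subseteq A$, the category $\CBo$ is modular, $A$ is Lagrangian in it, and $\BB:=(\CBo)_A$ is a fusion subcategory of $\CA$ whose relative center reproduces $Z(\CA)_B$, so that $I_\BB(\1)\cong B$. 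I expect the genuine obstacle to be the first step: establishing that $F_\BB$ is dominant and that the reconstruction equivalence $Z_\BB(\CA)\simeq Z(\CA)_B$ is monoidal, rather than merely an equivalence of module categories. Once that equivalence is in place, everything downstream is dimension counting and locality bookkeeping.
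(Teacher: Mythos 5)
First, a point of reference: the paper does not prove this statement at all --- it is quoted verbatim as \cite[Thm.~4.10]{DMNO}, with the sentence ``The following theorem has been proved in [Thm.~4.10] of [DMNO]'' immediately preceding it. So there is no in-paper proof to compare against; the relevant comparison is with the argument in \cite{DMNO}. Your outline does follow that route: factor the forgetful functor as $F=\tF_\BB\circ F_\BB$, apply the reconstruction theorem for dominant central functors to $F_\BB$ to get $B=I_\BB(\1)$ condensable and $Z_\BB(\CA)\simeq\CC_B$ as tensor categories, and then deduce the dimension formula, order-reversal, and the identification of $Z(\BB)^{\rev}$ with the local modules by bookkeeping. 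That is exactly the architecture of the cited proof.

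The gap is the one you name yourself: everything in your argument is conditional on (a) $F_\BB\colon Z(\CA)\to Z_\BB(\CA)$ being a \emph{dominant central} functor and (b) the induced equivalence $Z_\BB(\CA)\simeq\CC_B$ being monoidal. These are not formal consequences of $F$ being dominant --- dominance of a composite does not give dominance of its first factor, and exhibiting the central structure on $F_\BB$ (i.e.\ the lift of $Z(\CA)\to Z_\BB(\CA)$ through $Z(Z_\BB(\CA))$) is where the actual work lives; this is precisely \cite[Lem.~3.5]{DMNO} together with the analysis of the relative center as a dual category. Since the rest of the theorem really is dimension counting once this is granted, deferring it means the proposal is a roadmap rather than a proof. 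Two smaller issues: your characterization of $Z(\BB)^{\rev}\subseteq Z_\BB(\CA)$ as the objects ``whose $\BB$-half-braiding is a genuine braiding'' is not right --- it is the full subcategory of objects $(X,\s_{X,-})$ whose underlying object $X$ lies in $\BB$, and the locality statement is then \cite[Cor.~4.11]{DMNO}-type bookkeeping; and the claim that $\theta_B=\id_B$ and nondegeneracy of $\e_B m_B$ ``follow from pseudounitarity together with separability'' is true but itself a nontrivial fact about connected \'etale algebras that deserves a citation rather than an assertion. Surjectivity of $\BB\mapsto B$ also needs the verification that $I_{(\CBo)_A}(\1)\cong B$, which you assert but do not derive; the dimension count alone gives only an inequality-type comparison unless one first establishes a containment.
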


To archive our goal, we follow a similar approach as the proof of  \cite[Thm. 4.10, Rem. 4.12]{DMNO}  but  $A$ is an arbitrary condensable algebra in $\CC$. Since  $Z(\CA) = (\CAo)^{\rev} \boxtimes \a^+(\CC)$ as modular tensor categories, one can consider the composition
$$
F^+_\BB:=\CC \xrightarrow{\a_A^+} Z(\CA) \xrightarrow{F_\BB} Z_\BB(\CA)  
$$
which is obviously a central functor. Let $I^+_\BB: Z_\BB(\CA) \to \CC$ be the right adjoint of $F^+_\BB$. By \cite[Lem. 3.5]{DMNO}, $B=I^+_\BB(\1)$ is a condensable algebra in $\CC$ and $\CC_B$ is equivalent  as tensor categories to  $\ol{F^+_\BB(\CC)}$, the smallest tensor subcategory of $Z_\BB(\CA)$ containing the image of $F^+_\BB$. One can verify directly that the composition $\tF_\BB F^+_\BB$ is the induction functor $\a_A = (-)\o A$, where $\tF_\BB: Z_\BB(\CA) \to \CA$ denotes the forgetful functor. Since $\a_A: \CC \to \CA$ is surjective, so is $\tF_\BB : \ol{F^+_\BB(\CC)} \to \CA$.  

Let $\tI_\BB: \CA \to Z_\BB(\CA)$ be the right adjoint of $\tF_\BB$. Then the composition $I^+_\BB \tI_\BB$ is the right adjoint of $\a_A: \CC \to \CC_A$, and so $I^+_\BB \tI_\BB$ is isomorphic to the forgetful functor $\CC_A \to \CC$.

If $\BB = \CAo$, $\tF_\BB: \ol{F^+_\BB(\CC)} \to \CA$ is an equivalence. To see this, one should first notice that the braiding $R_{X,Y}$ in $\CC$ for $X \in \CA$ and $Y \in \CAo$ can be descended to an isomorphism $\tR_{X,Y}: X \oa Y \to Y \oa X$ of $A$-modules, and $(X, \tR_{X,-}) \in Z_\BB(\CA)$. These pairs $(X, \tR_{X,-})$ form a tensor subcategory  $\widetilde{\CA}$ of $Z_\BB(\CA)$, and  $\widetilde{\CA} = \ol{F^+_\BB(\CC)}$ since both have the same FP-dimension $\FPdim(\CA)$. Therefore,   $\tF_\BB : \ol{F^+_\BB(\CC)} \to \CA$ is an equivalence of tensor categories and $I^+_{\BB}(\1) = A.$

The preceding paragraph and the following lemma are elaborations of \cite[Rem. 4.12]{DMNO}.

\begin{lem}\label{l:AandB}
 For any fusion subcategory $\BB$ of $\CA$ containing $\CAo$,  $B= I^+_\BB(\1)$ is a condensable subalgebra of $A$, and  $A$ is a condensable algebra in $\CBo$. Moreover, $(\CBo)_A = \BB$ and $Z(\BB)$ is equivalent to $\CBo \boxtimes (\CAo)^{\rev}$ as braided tensor categories.
\end{lem}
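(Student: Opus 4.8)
The plan is to reduce the entire statement to a single identification: the condensable algebra $B = I^+_\BB(\1)$ produced by the relative-center construction is isomorphic, \emph{as a subalgebra of $A$}, to the invariant subalgebra $A^\BB$ of Definition~\ref{d:lattices}. Once this is in hand, every assertion of the lemma follows from the results of Section~\ref{s:Gal_Cor} together with the decomposition \eqref{eq:decomp}, so that no fresh structural analysis of $\CBo$ is required.

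First I would compute the hom-spaces of $B$. By Frobenius reciprocity for the adjunction $F^+_\BB \dashv I^+_\BB$, for each $x \in \CC$ one has $\CC(x,B) = \CC(x, I^+_\BB(\1)) \cong Z_\BB(\CA)\big(F^+_\BB(x), \1\big)$, where $F^+_\BB(x) = F_\BB(\a^+(x)) = (\a(x), \s_{\a(x),-}|_\BB)$. A morphism $\a(x) \to \1$ in $\CA$ lifts to $Z_\BB(\CA)$ precisely when it is $\BB$-central, i.e. commutes with the half-braiding against every $Y \in \BB$. The sliding argument proving the lemma that identifies $Z(\BB)(\bX,\1)$ with $\BB(X,\1)^\BB$ uses only the half-braiding against objects of $\BB$, so it applies verbatim in the relative center and gives $Z_\BB(\CA)(F^+_\BB(x),\1) = \CA(\a(x),\1)^\BB$. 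Transporting through the isomorphism $\vp_x$ of \eqref{eq:adj}, which is an intertwiner of the fusion action by Proposition~\ref{p:interwiner}, yields $\CC(x,B) \cong \CC(x,A)^\BB$ naturally in $x$, and summing over $\irr(\CC)$ gives $B \cong \bigoplus_{x} \CC(x,A)^\BB \o x = A^\BB$ as objects of $\CC$. I would then check that the embedding $\iota = I^+_\BB(u')$, where $u'\colon \1 \to \tI_\BB(\1)$ is the unit of the algebra $\tI_\BB(\1)$ and $I^+_\BB\tI_\BB$ is the forgetful functor $\CA \to \CC$, corresponds under this isomorphism to the inclusion $\CC(x,A)^\BB \hookrightarrow \CC(x,A)$; naturality together with the lax-monoidal structure of $I^+_\BB$ matches units and multiplications, so that $(B,\iota) \equiv A^\BB$ as subalgebras of $A$.

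With $(B,\iota)\equiv A^\BB$ the remaining claims are immediate. Proposition~\ref{p:subalgebra} shows $A^\BB$ is a condensable subalgebra of $A$, and the proof of Theorem~\ref{t:Gal_Cor} establishes that $A$ is a condensable algebra in $\CC_{A^\BB}^0=\CBo$ with $(\CBo)_A = (\CC_{A^\BB}^0)_A = \BB$. (If a self-contained argument is preferred, condensability of $A$ in $\CBo$ follows directly: the algebra map $\iota\colon B\to A$ makes $A$ a commutative algebra over $B$, locality $\mu_A^+ = \mu_A^-$ is forced by $m_A R_{A,A} = m_A = m_A R_{A,A}^{-1}$ together with naturality of $R$, and connectedness, $\theta_A=\id_A$ and nondegeneracy are inherited from $\CC$.) Finally, applying the decomposition \eqref{eq:decomp} to the condensable algebra $A$ in the modular tensor category $\CBo$ gives a braided equivalence $Z\big((\CBo)_A\big) \simeq \CBo \boxtimes \big((\CBo)_A^0\big)^{\rev}$; since the braiding of $\CBo$ is the descent of that of $\CC$ and local $A$-modules are automatically local $B$-modules, the local $A$-modules in $\CBo$ are exactly those of $\CC$, so $(\CBo)_A^0 = \CAo$, and with $(\CBo)_A = \BB$ this reads $Z(\BB) \simeq \CBo \boxtimes (\CAo)^{\rev}$. \textbf{The main obstacle} is the identification of the second paragraph: one must verify that the invariance/sliding argument genuinely transfers to the relative center $Z_\BB(\CA)$ rather than to the full center $Z(\BB)$, and, more delicately, that the abstractly defined embedding $\iota = I^+_\BB(u')$ matches the inclusion of $A^\BB$ at the level of algebra structures, not merely as objects of $\CC$.
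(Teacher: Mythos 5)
Your argument is correct in substance but takes a genuinely different route from the paper's. The paper proves this lemma top-down from the relative-center machinery: it cites \cite[Thm.~4.10]{DMNO} for the fact that $B=I^+_\BB(\1)$ is a condensable subalgebra of $A$ with $A$ condensable over it, uses \cite[Lem.~3.5]{DMNO} together with Ostrik's internal-hom theorem to identify $\ol{F^+_\BB(\CC)}$ with $\CC_B$, $F^+_\BB$ with induction $-\o B$, and $\tF_\BB$ with $-\o_B A$ (giving $(\CBo)_A\subseteq\BB$), and then forces the equalities $(\CBo)_A=\BB$ and $(\CBo)_A^0=\CAo$ by Frobenius--Perron dimension counts; the identification of $B$ with $A^\BB$ appears only afterwards, in the subsequent corollary, and only as objects of $\CC$. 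You invert this order: you first identify $B$ with $A^\BB$ \emph{as a subalgebra}, by combining the adjunction $F^+_\BB\dashv I^+_\BB$ with the observation that the invariant-subspace characterization of $Z(\BB)(\bX,\1)$ transfers to the relative center, and then import the remaining assertions from Section~\ref{s:Gal_Cor}. This is not circular (Section~\ref{s:relative} of the paper already uses Section~\ref{s:Gal_Cor} freely), and your key computation $Z_\BB(\CA)(F^+_\BB(x),\1)=\CA(\a(x),\1)^\BB$ is legitimate, since both the sliding argument and the nondegenerate-pairing argument in the unnumbered lemma of Section~\ref{s:indicator} only ever invoke the half-braiding against objects of $\BB$, and Proposition~\ref{p:interwiner} transports this to $\CC(x,A)^\BB$. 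What your route buys is a sharper conclusion (an equivalence of subalgebras rather than an isomorphism of objects) without internal homs or dimension counts; what it costs is the verification you yourself flag, namely that $\iota=I^+_\BB(u')$ is carried to the inclusion of the invariant subobject compatibly with units and multiplications. That check does close --- both embeddings are algebra monomorphisms onto the same subobject of $A$, the image of the idempotent $\rho(e_\1^\BB)$, and a subalgebra structure is determined by a monic algebra embedding --- but it is exactly the content the paper outsources to \cite[Thm.~4.10]{DMNO}, so it should be written out rather than asserted. Two minor points: condensability of $A^\BB$ via Proposition~\ref{p:subalgebra} needs $d(A^\BB)\ne 0$, which here is supplied by pseudounitarity; and your self-contained verification that the evaluation pairing of $A$ remains nondegenerate over $B$ (i.e.\ relative to $\o_B$ and the unit $B$) is glossed over and would need the same care the paper sidesteps by citation.
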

\begin{proof}
Let $\BB \in \LCA$. By \cite[Thm. 4.10]{DMNO}, $B:=I^+_\BB(\1)$ is considered as a condensable subalgebra of $A$ via the algebra embedding $\iota: B \to A$ given by $\iota:=I^+_\BB(u)$, where $u: \1 \to \tI_\BB(\1)$ is the unit map of the algebra $\tI_\BB(\1)$ in $Z_\BB(\CA)$. In particular, $A$ is a condensable algebra in $\CBo$.

By \cite[Lem. 3.5]{DMNO}, $\ol{F^+_\BB(\CC)}$  is equivalent to $\CB$ as fusion categories via the internal hom-functor $\uH(\1, Y)$ for $Y \in \ol{F^+_\BB(\CC)}$ (cf.  \cite[Thm. 3.1]{Ost03Mod}). By \cite[Lem 3.3]{Ost03Mod}, for $x \in \CC$, $\uH(\1, F^+_\BB(x)) \cong x \o \uH(\1, \1) \cong x \o B$ as $B$-modules. So, the functor $F^+_\BB : \CC \to \ol{F^+_\BB(\CC)}$ is identified with the induction functor $- \o B : \CC \to \CC_B$, and $\tF_B: \ol{F^+_\BB(\CC)} \to \CC_A$ is identified with $-\o_B A: \CC_B \to \CC_A$. Moreover, $\CBo$ is identified with the subcategory $Z(\BB)^{\rev} \cap \ol{F^+_\BB(\CC)}$ (cf. Theorem \ref{t:DMNO}). Therefore, $(\CBo)_A=\ol{\CBo \o_B A}$ is a subcategory of $\BB$. Since 
$$
\FPdim((\CBo)_A) =\frac{\FPdim(\CC)/\FPdim(B)^2}{\FPdim(A)/\FPdim(B)} =\frac{\FPdim(\CA)}{\FPdim(B)} = \FPdim(\BB), 
$$
we find $(\CBo)_A = \BB$. 

By \cite[Cor. 3.30]{DMNO}, $Z(\BB) = Z((\CBo)_A)$ is equivalent to $\CBo \boxtimes ((\CBo)_A^0)^{\rev}$. It is clear that $\CAo$ is a tensor subcategory of $(\CBo)_A^0$. Since 
$$
\FPdim((\CBo)_A^0) = \frac{\dim(\CBo)}{d_B(A)^2} = \frac{\FPdim(\CC)}{\FPdim(A)^2} = \FPdim(\CAo),
$$
we have $((\CBo)_A^0 = \CAo$, and the last assertion follows.
\end{proof}

Since $A \in \CBo$, one can consider its irreducible decomposition 
$$
A \cong \sum_{X \in \irr(\CBo)} [A, X]_{\CBo}\,X \cong \sum_{X \in \irr(\CBo)} \CBo(X, A)\o_B X
$$
in $\CBo$, where the $\BC$-linear space $\CBo(X, A)$ is identified with $\CBo(X, A) B$ in the second isomorphism. Since $(\CBo)_A=\BB$ by Lemma \ref{l:AandB}, $K(\BB)$ acts on $\CBo(X,A)$ for any $X \in \CBo$ by Definition \ref{def:action2} (with $\CC$ being replaced by $\CBo$), which is irreducible if $X \in  \iCBo$ with $\CBo(X,A) \ne 0$ by Theorem \ref{t1}.

% The indecomposable idempotents of $K(\CBo)$ indexed by $X \in \irr(\CBo)$ is defined as
% $$
% e_X^{\WB} = \frac{d_B(X)}{\dim(\WB)}\sum_{Y \in \irr(\WB)}S^{\WB}_{X,Y} Y^*
% $$
% where $\WB=(\CBo)^{\rev}$. Since $\WB$ is embedded into $Z(\BB)$ via $\ta^+_A: \WB \to Z(\BB)$, 
% $$
% e_X^{\BB}:= \frac{d_B(X)}{\dim(\WB)}\sum_{Y \in \irr(\WB)}S^{\WB}_{X,Y^*} \ta^+(Y)
% $$
% is a central idempotent of $K(\BB)$ whenever $\CBo(X,A) \ne 0$. Same as before, we let $\AA = (\CAo)^{\rev}$, and 
% $$
% e_\1^{\AA} = \frac{1}{\dim(\AA)}\sum_{Y \in \irr(\AA)}d_A(Y) Y\,.
% $$
% Since $\CAo \subseteq \BB$, $e_\1^\AA$ is a central idempotent of $K(\BB)$.

Now, we can obtain the following ``Galois correspondence" between fusion subcategories of $\CA$ and condensable subalgebras of $A$. 
 
 \begin{cor}
     The condensable subalgebra $B=I_\BB^+(\1)$ of $A$ determined by $\BB \in \LCA$ is isomorphic to  $A^\BB$ as objects in $\CC$.
 \end{cor}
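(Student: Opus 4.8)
The plan is to identify the two subalgebras through the bijectivity of the Galois correspondence in Theorem~\ref{t:Gal_Cor}, rather than by comparing their isotypic decompositions by hand. The first step is to place both candidates in $\LaA$. We are in the pseudounitary setting, so every simple object of $\CC$ has positive dimension; hence $d(A^\BB)\ne 0$ and Proposition~\ref{p:subalgebra} makes $A^\BB=(B',\iota')$ a condensable subalgebra of $A$, i.e. $A^\BB\in\LaA$. On the other side, Lemma~\ref{l:AandB} already asserts that $B=I^+_\BB(\1)$ is a condensable subalgebra of $A$ via the embedding $\iota=I^+_\BB(u)$, so $(B,\iota)\in\LaA$ as well.

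Next I would compute the image of each of these subalgebras under the inverse Galois map $\LaA\to\LCA$, $(B'',\iota'')\mapsto(\CC_{B''}^0)_A$. For $A^\BB$, the equality $(\CC_{A^\BB}^0)_A=\BB$ is an explicit conclusion of Theorem~\ref{t:Gal_Cor}. For $B=I^+_\BB(\1)$, the identity $(\CBo)_A=\BB$ is exactly the content of Lemma~\ref{l:AandB}. Thus both condensable subalgebras $A^\BB$ and $B$ of $A$ are sent to the \emph{same} fusion subcategory $\BB\in\LCA$ by the inverse correspondence.

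To finish, recall that $\BB\mapsto A^\BB$ is a bijection $\LCA\to\LaA$ with inverse $(B'',\iota'')\mapsto(\CC_{B''}^0)_A$ by Theorem~\ref{t:Gal_Cor}; in particular this inverse is injective on $\LaA$. Since $A^\BB$ and $(B,\iota)$ have the same image $\BB$, injectivity forces $A^\BB\equiv(B,\iota)$ as subalgebras of $A$, i.e. there is an algebra isomorphism $B\to A^\BB$ compatible with the embeddings into $A$. In particular $B\cong A^\BB$ as objects in $\CC$, which is the assertion. The argument is entirely formal once its two inputs are in place, so the only real obstacle is confirming that the relative-center construction $B=I^+_\BB(\1)$ and the local-module construction $(\CBo)_A$ of the Galois inverse refer to the \emph{same} operation $(-)_A$ on $\BB$; this is precisely what Lemma~\ref{l:AandB} (an elaboration of \cite[Rmk.~4.12]{DMNO}) supplies, and after that matching the injectivity of the bijection closes the proof with no further computation.
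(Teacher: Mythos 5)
Your argument is correct, but it is not the route the paper takes. You treat the statement as a purely formal consequence of the bijectivity of the Galois correspondence: both $A^\BB$ and $(B,\iota)=(I^+_\BB(\1), I^+_\BB(u))$ lie in $\LaA$ (the former because pseudounitarity forces $d(A^\BB)>0$, so Proposition~\ref{p:subalgebra} applies; the latter by Lemma~\ref{l:AandB}), both are sent to $\BB$ by the inverse map $(B'',\iota'')\mapsto (\CC_{B''}^0)_A$ (by Theorem~\ref{t:Gal_Cor} and Lemma~\ref{l:AandB} respectively), and injectivity of that inverse forces $A^\BB\equiv (B,\iota)$. Equivalently and even more directly: Lemma~\ref{l:AandB} gives $(\CBo)_A=\BB$, so $A^\BB=A^{(\CBo)_A}\equiv (B,\iota)$ by step (i) of the proof of Theorem~\ref{t:Gal_Cor}. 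The paper instead argues by an explicit computation of multiplicity spaces: it expands $\CC(x,A)\cong \sum_{Y\in\irr(\CBo)}\CC(x,Y)\o\CBo(Y,A)$ through the chain of adjunctions $\CC(x,A)=\CA(x\o B\o_B A, A)=\CB(x\o B, A)$, invokes the Schur--Weyl duality (Theorem~\ref{t1}) for the condensable algebra $A$ in $\CBo$ (made applicable by $(\CBo)_A=\BB$ from Lemma~\ref{l:AandB}) to see that each $\CBo(Y,A)$ is an irreducible $K(\BB)$-module, and thereby identifies $\CC(x,A)^\BB\cong\CC(x,B)$, whence $A^\BB\cong B$ objectwise. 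Both proofs hinge on Lemma~\ref{l:AandB}, but yours outsources the rest to the already-proved bijection and in exchange yields the stronger conclusion that $B$ and $A^\BB$ agree as subalgebras of $A$ (not merely as objects), while the paper's computation stays closer to the definitions and additionally exhibits the invariant subspace $\CC(x,A)^\BB$ of each multiplicity space explicitly. No gap.
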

 \begin{proof}
     Let $x \in \irr(\CC)$ such that $\CC(x,A) \ne 0$. Then, we have
\begin{align*}
    \CC(x,A) & = \CA(x\o A,A) = \CA(x \o B \o_B A,A) = \CB(x\o B, A) \\
    &=\, \sum_{Y \in \irr(\CB)} \CB(x \o B, Y)\o \CB(Y,A) \\
    &=\, \sum_{Y \in \irr(\CBo)} \CB(x \o B, Y)\o \CBo(Y,A)\\
    &=\, \sum_{Y \in \irr(\CBo)} \CC(x, Y)\o \CBo(Y,A),
\end{align*}
which is the decomposition of $\CC(x,A)$ into a direct some of irreducible $K(\BB)$-modules. Since $\CC(x,A)^\BB$ is the isotypic component of $\CC(x,A)$ corresponding to the dimension character of $\BB$, we find
$$
\CC(x,A)^\BB =  \CC(x, B)\o \CBo(B,A) \cong \CC(x, B)
$$
as $\BC$-spaces since $[B,A]_{\CBo} = 1$. Now, we have
$$
B = \sum_{x \in \irr(\CC)} \CC(x,B) x = \CC(x,A)^\BB x  = A^\BB \,.  \qedhere
$$
 \end{proof}

\section{Examples of fusion actions on VOAs}

We discuss several examples of fusion actions in terms of orbifolds. In particular, we show in these examples that iterated orbifolds of group actions on VOAs are invariants of actions of some weakly integral fusion categories.

\subsection{Orbifold theory}

Let $A$ be a simple VOA. Let $G$ be a finite automorphism group of $A$.  It follows from \cite{DLMcomp1996} that $A^G$ is a simple vertex operator subalgebra of $A$ and $A$ has a Schur-Weyl type duality decomposition 
 $$A=\oplus_{\lambda\in \irr(G)}W_{\lambda}\otimes A_{\lambda}$$  where $W_{\lambda}$ is the irreducible $G$-module with character $\lambda$ and the multiplicity spaces $A_{\lambda}$ are inequivalent irreducible $A^G$-modules. In addition, there is a one-to-one correspondence between the subgroups of $G$ and the sub VOAs of $A$ containing $A^G$ by sending subgroup $H$ to $A^H$ \cite{DM1997, HMT1999}. The tensor product of two modules $V_1$ and $V_2$ of a VOA $A$ will be denoted by $V_1 \o V_2$ instead of the traditional notation $V_1 \boxtimes V_2$. If $A_1, A_2$ are VOAs, then so is their tensor product $A_1 \o A_2$ over $\BC$, and $\Mod{A_1 \o A_2}$ is equivalent to the Deligne product $\Mod{A_1}\!\boxtimes \Mod{A_2}$.  
 
 We now explain that the group action of $G$ on $A$ is the fusion action in this paper. For this purpose, we need to assume that $A^G$ is rational and $C_2$-cofinite. Then the $A^G$-module category $\CC=\Mod{A^G}$ is a modular tensor category, $A$ is a condensable algebra in $\CC$
 and $\CA$ is a $G$-extension of $\CAo$ \cite[Lem. 3.3]{DLXY2024}, \cite[Cor. 2.5]{Mc2021}. 
 
\begin{thm} \label{t2}
   The action of $\ol{K}=e_\1^\AA K(\CA)$ on $A$ is equivalent to the action of $G$ on $A$, where $\AA = \CAo$.
   \end{thm}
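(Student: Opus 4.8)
The plan is to identify both the fusion action of $\ol K = e_\1^\AA K(\CA)$ and the $G$-action on $A$ with the same algebra inside $\CC(A,A)$, and then transport one to the other. Recall first that, since $\CC = \Mod{U}$ is an MTC and $A$ is condensable in $\CC$, Theorem \ref{t1}(iv) provides an isomorphism of $\BC$-algebras $\rho : \ol K \xrightarrow{\sim} \CC(A,A)$, where $\CC(A,A) = \End_U(A)$ is the space of $U$-module endomorphisms of $A$ (morphisms in the module category $\Mod{U}$ are exactly $U$-module maps) and the fusion action of $u \in \ol K$ on $A$ is by definition $\rho(u)$.

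Next I would realize the $G$-action as an algebra map into the same space. Every $g \in G$ is a VOA automorphism of $A$ fixing $U = A^G$ pointwise, so $g(Y(u,z)a) = Y(u,z)\,g(a)$ for $u \in U$ and $a \in A$; hence $g \in \End_U(A) = \CC(A,A)$, and $\BC$-linear extension yields an algebra homomorphism $\sigma : \BC[G] \to \CC(A,A)$, which is precisely the $G$-action on $A$.

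The core step is to show that $\sigma$ is an isomorphism. Here I would invoke the orbifold Schur--Weyl duality of \cite{DLMcomp1996}: since $A$ is simple, $A = \bigoplus_{\lambda \in \irr(G)} W_\lambda \otimes A_\lambda$ with each $A_\lambda$ a \emph{nonzero} irreducible $U$-module and the $A_\lambda$ pairwise inequivalent. Schur's lemma then gives $\End_U(A) = \bigoplus_{\lambda \in \irr(G)} \End_\BC(W_\lambda)$, with $g \in G$ acting on the block $\End_\BC(W_\lambda)$ through the irreducible representation $\lambda$. Under this identification $\sigma$ is exactly the Wedderburn isomorphism $\BC[G] \xrightarrow{\sim} \bigoplus_{\lambda \in \irr(G)} \End_\BC(W_\lambda)$: completeness of $\{W_\lambda\}$ (every $A_\lambda \ne 0$) gives surjectivity, and faithfulness of the $G$-action gives injectivity. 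Thus $\sigma$ is an algebra isomorphism.

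Finally, set $\phi := \rho^{-1} \circ \sigma : \BC[G] \to \ol K$; this is an isomorphism of $\BC$-algebras, and for every $g \in G$ the fusion action of $\phi(g)$ on $A$ is $\rho(\phi(g)) = \sigma(g) = g$, the original automorphism. Hence $\phi$ intertwines the two actions on $A$, proving that the $\ol K$-action and the $G$-action on $A$ are equivalent. The main obstacle is the fullness and completeness used in the core step --- namely that $\End_U(A)$ recovers exactly $\bigoplus_{\lambda} \End_\BC(W_\lambda)$ and that every irreducible of $G$ occurs --- which is precisely where the deep input of \cite{DLMcomp1996} (simplicity of $A$ forcing each multiplicity space $A_\lambda$ to be a nonzero, pairwise distinct irreducible $U$-module) is indispensable; everything else is formal bookkeeping around the isomorphism $\rho$ of Theorem \ref{t1}.
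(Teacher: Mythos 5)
Your proof is correct, but it takes a genuinely different route from the paper's. You transport the $G$-action to $\ol K$ through the common target $\CC(A,A)$: Theorem~\ref{t1}(iv) gives the isomorphism $\rho:\ol K\to\CC(A,A)$, the classical orbifold duality of \cite{DLMcomp1996} makes $\sigma:\BC[G]\to\End_U(A)=\CC(A,A)$ an isomorphism (completeness and inequivalence of the nonzero multiplicity spaces $A_\lambda$ giving Wedderburn surjectivity and injectivity), and $\phi=\rho^{-1}\sigma$ then intertwines the two actions by construction. The paper instead works inside $K(\CC_A)$ directly: it uses the $G$-grading $\CC_A=\bigoplus_{g}\CC_A(g)$ by twisted-module categories to show that all simple objects of a fixed component $\CC_A(g)$ become proportional in $\ol K$, so that the normalized classes $\bar g=X_g/d_A(X_g)$ form a group with $\ol K=\BC[\overline G]\cong\BC[G]$, and then matches the primitive central idempotents of $\ol K$ (computed via the elements $\phi_{W_\lambda}$ and the formal-codegree identity \eqref{eq:trace1} from the proof of Theorem~\ref{t1}) with the standard idempotents $\frac{\dim W_\lambda}{o(G)}\sum_g\lambda(g)g^{-1}$ of $\BC[G]$. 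The trade-off: your argument is shorter and needs only Theorem~\ref{t1}(iv) plus the DLM decomposition, but the isomorphism $\phi$ it produces is abstract and does not say which elements of $\ol K$ correspond to which group elements; the paper's proof pins this down, showing that a simple $g$-twisted module $X\in\CC_A(g)$ acts on $A$ as $d_A(X)$ times the automorphism $g$ itself, which is the concrete sense in which ``the fusion action recovers the $G$-action'' and is the form of the result actually reused later (e.g., in Lemma~\ref{l:pointed} and the examples, where the $G$-grading of $\CC_A$ is what matters).
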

\begin{proof} By Proposition \ref{p5.8}, the action of $\ol{K}$ on $A$ is equivalent to the action of $\ol G$ on $A.$ So it suffices to show that the actions of $\ol G$ and $G$ are equivalent. That is,  we need to prove that the actions of the groups $\overline{G}$ and $G$ on $W_\lambda = \CC(A_\lambda,A)$ are equivalent for all $\lambda\in \irr(G)$. Recall from Section 4 that 
$$\phi_{W_\lambda} = \sum_{Y \in  \iCA} \Tr(Y, W_\lambda)Y^*=\sum_{g\in G}\sum_{ Y\in \irr(\CA(g))} \Tr(Y, W_\lambda)Y^*$$
for any $\lambda\in \irr(G).$ Since $Y=d_A(Y) \bar g$ for any  $Y\in \irr(\CA(g))$,  we see that 
$$\sum_{ Y\in \irr(\CA(g))} \Tr(Y, W_\lambda)Y^*=\dim(\CAo)\Tr(\bar g, W_\lambda)\bar g^{-1}.$$
Thus 
$$
\frac{\dim({W_\lambda})}{\dim(\CC_A)}\phi_{W_\lambda}=\frac{\dim(W_\lambda)}{o(G)}\sum_{g\in G} \Tr(\bar g, W_\lambda)\bar g^{-1}.
$$
Noting from  \cite{DJX2013} that $d(A_{\lambda}) =\dim(W_{\lambda}).$
It follows from (\ref{eq:trace1}) that  
$$\frac{\dim(W_{\lambda})}{\dim(\CC_A)}\Tr(\phi_{W_\lambda}, W_\lambda)=\dim(W_{\lambda}).$$
Since $\phi_{W_\lambda}=0$ on $W_{\mu}$ if $\mu\ne \lambda$, we see  immediately that the indecomposable central elements 
$$\frac{\dim({W_\lambda})}{\dim(\CC_A)}\phi_{W_\lambda}=\frac{\dim(W_\lambda)}{o(G)}\sum_{g\in G}\lambda(g)g^{-1}$$
in $\overline{K}=\BC[\overline{G}]=\BC[G]$ and the proof is complete.
\end{proof}
\begin{lem} \label{l:pointed}
    Let $G$ be a finite automorphism group of a simple VOA $A$ such that $A^G$ contains a rational, $C_2$-cofinite subVOA $B$ and $\Mod{B}$ is a pseudounitary MTC. Then $\CA$ contains a $G$-graded fusion subcategory $\BB$ with $\BB(1) = \Mod{A}$, where $\CC= \Mod{B}$. In particular, if $A$ is holomorphic, then $\CA$ contains  $\Vec_G^\w$ as a fusion subcategory  for some $\w \in Z^3(G, \BC^\times)$. 
\end{lem}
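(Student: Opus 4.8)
The plan is to produce $\BB$ as the fusion subcategory of $\CA$ corresponding, under the Galois correspondence of Theorem~\ref{t:Gal_Cor}, to the intermediate fixed-point VOA $A^G$, and then to recognize its $G$-grading from orbifold theory exactly as in the proof of Theorem~\ref{t2}. First I would record the standing facts: since $B$ is rational and $C_2$-cofinite with $\Mod{B}$ pseudounitary, $\CC=\Mod{B}$ is a pseudounitary MTC and $A$ is a condensable algebra in $\CC$ by \cite{HKL} (in fact $A$ is a rational $C_2$-cofinite VOA, being a condensable extension of $B$). As $B\subseteq A^G\subseteq A$ are VOAs and $A$ is a finite extension of $B$, the fixed-point VOA $A^G$ is a condensable subalgebra of $A$ in the sense of Section~\ref{s:Gal_Cor}; here $d(A^G)>0$ because $\CC$ is pseudounitary, so condensability holds by Proposition~\ref{p:subalgebra}. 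Applying Theorem~\ref{t:Gal_Cor} to the subalgebra $A^G$, I set
$$\BB:=(\CC_{A^G}^0)_A\in\LCA,$$
a fusion subcategory of $\CA$ containing $\CAo$ with $A^\BB=A^G$; this is my candidate.

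Next I would establish the grading. Write $\DD:=\CC_{A^G}^0$, which is an MTC by \cite[Thm.~4.5]{KO02}, and note that $A$ is a condensable algebra in $\DD$ with unit object $A^G=\1_\DD$, so that $\BB=\DD_A$ is identified with the full subcategory of $\CA$ consisting of those $A$-modules whose restriction to $A^G$ is local. For the identity component I would use transitivity of the local-module construction, $\DD_A^0=(\CC_{A^G}^0)_A^0=\CC_A^0=\CAo=\Mod{A}$, where the last identification is the usual correspondence between local $A$-modules in $\Mod{B}$ and genuine $A$-modules. The finite group $G$ acts on $A$ by VOA automorphisms fixing $A^G$ pointwise, so it acts on $A$ as an algebra in $\DD$ with fixed-point subalgebra $\1_\DD$, exhibiting $A$ as a $G$-extension of the unit of $\DD$. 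By the orbifold decomposition results \cite[Lem.~3.3]{DLXY2024} and \cite[Prop.~2.4]{McRae2021} invoked in Theorem~\ref{t2}, $\DD_A=\bigoplus_{g\in G}\DD_A(g)$ is $G$-graded with $\DD_A(g)$ the category of $g$-twisted $A$-modules and $\DD_A(1)=\DD_A^0=\Mod{A}$. Transporting this grading to $\BB$ yields the desired $G$-graded fusion subcategory of $\CA$ with $\BB(1)=\Mod{A}$.

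For the holomorphic case I would argue by Frobenius-Perron dimensions. If $A$ is holomorphic then $\Mod{A}=\Vec$, so $\BB(1)=\Vec$ and $\FPdim(\BB(1))=1$. Since $A$ is rational and each $g\in G$ has finite order, there is a unique irreducible $g$-twisted $A$-module, so every component $\BB(g)$ is nonzero and the grading is faithful; hence $\FPdim(\BB(g))=\FPdim(\BB(1))=1$ for all $g$, which forces each $\BB(g)$ to consist of a single invertible simple object. Therefore $\BB$ is pointed with underlying group $G$, i.e. $\BB\simeq\Vec_G^\w$ for the $3$-cocycle $\w\in Z^3(G,\BC^\times)$ recording its associativity constraints, and $\CA$ contains $\Vec_G^\w$ as claimed.

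The main obstacle I anticipate is justifying the $G$-grading of $\DD_A$ \emph{without} assuming $A^G$ rational: Theorem~\ref{t2} takes $\CC=\Mod{A^G}$, whereas here $A^G$ may fail to be rational and I only have the MTC $\DD=\CC_{A^G}^0$, which need not coincide with $\Mod{A^G}$. The careful point is to verify that the $g$-twisted $A$-modules produced by \cite{DLXY2024, McRae2021} indeed lie in $\DD$ (equivalently, restrict to local $A^G$-modules), that the grading group is exactly $G=\Aut(A/A^G)$, and that the grading is faithful; these checks are what make $\BB=\DD_A$ genuinely $G$-graded inside $\CA$.
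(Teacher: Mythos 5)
Your proposal follows essentially the same route as the paper: the paper's proof simply invokes the Galois correspondence (Theorem~\ref{t:Gal_Cor}) to identify $\BB=(\Mod{A^G})_A=(\CC_{A^G}^0)_A$, cites the proof of Theorem~\ref{t2} for the $G$-grading with $\BB(1)=\Mod{A}$, and notes that the holomorphic case follows since $\Mod{A}=\Vec$ --- exactly your argument, with your Frobenius--Perron dimension count for pointedness left implicit. The ``main obstacle'' you flag (that $A^G$ might not be rational, so $\DD=\CC_{A^G}^0$ might differ from $\Mod{A^G}$) is closed by the same observation you already make for $A$: $A^G$ is a condensable extension of the rational, $C_2$-cofinite VOA $B$, hence itself rational and $C_2$-cofinite, so $\CC_{A^G}^0\simeq\Mod{A^G}$ and the $G$-grading from the proof of Theorem~\ref{t2} transports verbatim.
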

\begin{proof}
    By the  Galois correspondence (Theorem \ref{t:Gal_Cor}) that $\BB = (\Mod{A^G})_A = (\CC_{A^G}^0)_A$ is a fusion subcategory of $\CA$. It follows from the remark before Theorem \ref{t2} that $\BB$ is a $G$-extension of $\Mod{A}$. The last assertion is an immediate consequence of the fact  that $\Vec = \Mod{A}$ when $A$ is holomorphic.
\end{proof}

\begin{prop}
    Let $A$ be a holomorphic $VOA$ and  $A=A_0 \supset A_1 \supset \cdots \supset A_n$  be simple subVOAs of $A$ such that $A_n$ is  rational, $C_2$-cofinite of CFT type and
    the weight of any irreducible $A_n$-module is positive except $A_n$ itself. Suppose there exist finite subgroups $G_i$ of $\Aut(A_i)$ such that $A_{i+1} = A_i^{G_i}$ for $i=0, \dots, n-1$.  Then $\BB_{i+1}=(\Mod{A_{i+1}})_{A}$ is a $G_i$-extension of $\BB_i$ and
    $$
    \BB_0=\Vec \subset \BB_1 \subset \BB_2 \subset \cdots \BB_n = \CA
    $$
    where $\CC=\Mod{A_n}.$ In particular, $\CA$ is a nilpotent category, and $\FPdim(\CA) = |G_0|\cdots |G_{n-1}|$.
\end{prop}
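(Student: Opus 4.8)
The plan is to realize $\CA$ as a tower of finite group extensions by running the orbifold construction one step at a time and transporting the resulting grading along the algebra inclusions $A_{i+1}\subseteq A_i\subseteq A$. First I would record the setup and the two endpoints. Since $A_n$ is simple, rational, $C_2$-cofinite of CFT type with positive energy, $\CC=\Mod{A_n}$ is a pseudounitary MTC and $A$ is a condensable algebra in $\CC$. Each $A_i$ satisfies $A_n\subseteq A_i\subseteq A$, so $A_i$ is a simple VOA extension of the rational $C_2$-cofinite VOA $A_n$ by a condensable algebra; hence $A_i$ is itself rational and $C_2$-cofinite, and $\Mod{A_i}=\CC_{A_i}^0$ is a pseudounitary MTC. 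By the Galois correspondence (Theorem~\ref{t:Gal_Cor}) applied to $(\CC,A)$, each $A_i$ determines $\BB_i:=(\Mod{A_i})_A=(\CC_{A_i}^0)_A\in\LCA$, and since the correspondence is order-reversing, the chain $A=A_0\supseteq\cdots\supseteq A_n$ yields $\BB_0\subseteq\cdots\subseteq\BB_n$. The endpoints are immediate: because $A$ is holomorphic, $\Mod{A}=\Vec$ with $A$ as its unit object, so $\BB_0=(\Mod{A})_A=\Vec$; and $\BB_n=(\Mod{A_n})_A=\CC_A=\CA$.

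The main step is to prove that $\BB_{i+1}$ is a $G_i$-extension of $\BB_i$. I would work inside the MTC $\CC_i:=\Mod{A_{i+1}}=\CC_{A_{i+1}}^0$, in which $A_i\subseteq A$ are condensable algebras and $\BB_{i+1}=(\CC_i)_A$ by Lemma~\ref{l:AandB}. Applying the argument in the proof of Theorem~\ref{t2} (equivalently Lemma~\ref{l:pointed} with base $B=A_n$) to the action of $G_i$ on $A_i$ with $A_i^{G_i}=A_{i+1}$, the category $(\CC_i)_{A_i}$ of $A_i$-modules in $\CC_i$ is faithfully $G_i$-graded with trivial component $(\CC_i)_{A_i}^0=\Mod{A_i}$. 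Now $A$, being an ordinary untwisted VOA extension of $A_i$, is a local $A_i$-module and hence a connected commutative algebra sitting in the degree-$1$ component of this grading. Inducing to $A$-modules through the tower equivalence $(\CC_i)_A\simeq\big((\CC_i)_{A_i}\big)_A$, and using that tensoring over a degree-$1$ connected algebra preserves and reflects the $G_i$-grading, I obtain a faithful $G_i$-grading on $\BB_{i+1}=(\CC_i)_A$ whose trivial component is $\big((\CC_i)_{A_i}^0\big)_A=(\Mod{A_i})_A=\BB_i$. This is exactly the assertion that $\BB_{i+1}$ is a $G_i$-extension of $\BB_i$, and it re-proves the inclusions $\BB_i\subseteq\BB_{i+1}$ of the chain.

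Finally I would read off the two consequences. The chain $\Vec=\BB_0\subseteq\BB_1\subseteq\cdots\subseteq\BB_n=\CA$ exhibits $\CA$ as obtained from $\Vec$ by a finite sequence of finite group extensions, so $\CA$ is nilpotent by the characterization of nilpotent fusion categories through successive group extensions. Since a faithful $G$-grading multiplies Frobenius--Perron dimension by $|G|$, we have $\FPdim(\BB_{i+1})=|G_i|\,\FPdim(\BB_i)$, and starting from $\FPdim(\BB_0)=\FPdim(\Vec)=1$ this telescopes to $\FPdim(\CA)=|G_0|\cdots|G_{n-1}|$. (As a consistency check, Theorem~\ref{t:Gal_Cor} gives $\FPdim(\BB_j)=\dim(\CC)/(d(A)d(A_j))$, whose ratio $d(A_i)/d(A_{i+1})$ is the orbifold index $|G_i|$.)

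The hard part will be the transfer of the grading from the $A_i$-module category, where the orbifold theory of Theorem~\ref{t2} lives, to the $A$-module category $\BB_{i+1}$ that the statement concerns. Two points need care: first, that $A$ genuinely lands in the degree-$1$ (untwisted) part of the $G_i$-grading on $(\CC_i)_{A_i}$, which relies on a VOA extension carrying no twist over its subVOA; and second, the compatibility of the tower equivalence $(\CC_i)_A\simeq\big((\CC_i)_{A_i}\big)_A$ with the gradings, i.e.\ that induction along a connected degree-$1$ algebra is grade-preserving and nonzero on each component so the extension stays faithful. One should also verify along the way that every intermediate $A_i$ inherits rationality, $C_2$-cofiniteness, and pseudounitarity of its module category, so that each single orbifold step is licensed.
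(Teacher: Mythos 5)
Your proposal is correct and follows essentially the same route as the paper: an induction on $i$ in which the orbifold lemma (Lemma \ref{l:pointed}, itself resting on Theorem \ref{t2}) gives the faithful $G_i$-grading on $(\Mod{A_{i+1}})_{A_i}$ with trivial component $\Mod{A_i}$, and the grading is then transported to $\BB_{i+1}$ by inducing along $-\o_{A_i}A$, which is exactly how the paper defines $\BB_{i+1}(g)$ and verifies multiplicativity and closure under duals via $(X\o_{A_i}A)\o_A(Y\o_{A_i}A)\cong(X\o_{A_i}Y)\o_{A_i}A$. The concluding steps (nilpotency from the chain of extensions and the telescoping of Frobenius--Perron dimensions) are likewise identical.
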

\begin{proof}
    From the assumption we know that $\Mod{A_n}$ is  pseudounitary. By the preceding lemma, $\BB_1$ is a $G_0$-extension of $\BB_0$. Assume that $\BB_i$ is a $G_{i-1}$-extension of $\BB_{i-1}$ for some integer $i \ge 1$. By the preceding lemma, $(\Mod{A_{i+1}})_{A_i} = (\CC_{A_{i+1}}^0)_{A_i} = \bigoplus_{g \in G_i} (\Mod{A_{i+1}})_{A_i}(g)$ is a $G_i$-extension of $\Mod{A_i} = \CC_{A_i}^0$.
    Let $\BB_{i+1}(g)$ denote the full subcategory of $\CA$ in which each simple object is a summand of $X \o_{A_i} A$ for some $X \in (\Mod{A_{i+1}})_{A_i}(g)$.   In particular, $X \o_{A_i} A \in \BB_{i+1}(g)$ for any $X \in (\Mod{A_{i+1}})_{A_i}(g).$ Obviously, $\BB_{i+1}(1)= (\Mod{A_i})_A =\BB_i$, and $A$ is a condensable algebra in $\Mod{A_i}$. If $Y \in (\Mod{A_{i+1}})_{A_i}(h)$,  then
    $$
    (X \o_{A_i} A) \oa (Y \o_{A_i} A) \cong (X \o_{A_i} Y) \o_{A_i} A \in \BB_{i+1}(gh)
    $$
    and 
    $$
    (Y\o_{A_i} A)^* \cong  Y^* \o_{A_i} A \in \BB_{i+1}(h^{-1})\,.
    $$
    Therefore, $\BB_{i+1}$ is a $G_i$-extension of $\BB_i$ and $\FPdim(\BB_{i+1}) = |G_i|\FPdim(\BB_i)$. Now the proof  is completed by induction,  and $\FPdim(\BB_{i+1})=|G_0|\cdots |G_i|$. By  definition  (cf. \cite[Sec.1] {ENO2}), $\CA$ is a nilpotent category, and $\FPdim(\CA) = |G_0|\cdots |G_{n-1}|$.
\end{proof}

\subsection{Fusion actions associated to \texorpdfstring{$A_l$}{A}} \label{ss:Al} The simple roots for $A_{l}$ can be expressed in terms of an orthonormal basis $e_1, \ldots, e_{l+1}$ as follows:
$$
\alpha_k=e_k-e_{k+1} \text { for } 1 \leq k \leq l .
$$ Let $L$ be the root lattice of type $A_{l}$ and 
$$
\lambda_k=\sum_{j=1}^k e_j-\frac{k}{l+1} \sum_{j=1}^{l+1} e_j  
$$
 be the fundamental weights for $k=1,....,l$. Also set $\lambda_0=0.$
Then $V_L$ is a rational, $C_2$-cofinite  VOA whose irreducible modules are $V_{L+\lambda_r}$ for $0\leq r\leq l$ \cite{D1993}.  

By \cite[Prop. 4.3]{DNR25} that there exists another positive definite even lattice $K$ such that the orthogonal sum $(L,K)=L+K$ is a sublattice of an even unimodular lattice $E$ and $E=\cup_{r=0}^{l} (L+\lambda_r, K+\mu_r)$
where $K^{\circ}=\cup_{r=0}^{l} (K+\mu_r)$ is the dual lattice of $K$ and $\mu_0=0.$ Moreover,
$\Modrev{V_L}$ is braided equivalent to $\Mod{V_K}.$

The VOA $V_E$ has two automorphisms $\sigma=\sigma_E,\tau$ such that 
$\tau$ acts on $V_{L+\lambda_r}\otimes V_{K+\mu_r}$ as scalar $e^{\frac{2\pi ir}{l+1}}$ and $\sigma$ is the involution
induced from the $-1$-isometry of $E.$ The automorphisms $\tau,\sigma$ generate a dihedral group $G$ of order 
$2(l+1).$ For each subspace $X$ of $V_E$ which is invariant under the action of $\sigma$,  we have $X=X^+\oplus X^-$ where $X^{\pm}$ is the eigenspace of $\sigma$ with eigenvalue $\pm 1.$ 
Then $V_E^G=V_{L+K}^+=(V_L\otimes V_K)^+.$ 

In this example, we consider $\CC=\Mod{V_{L}^+\otimes V_{K}^+ }=\Mod{V_{L}^+}\boxtimes \Mod{V_{K}^+}$ and the holomorphic VOA $A:=V_E$ is a condensable algebra  in $\CC.$ Note that $V_{L+K}^+$ is also a condensable algebra in  $\CC$,
and $V_L^+\otimes V_K^+=(V_{L+K}^+)^{\langle\sigma_L \rangle}.$ Clearly, $V_{L}^+\otimes V_{K}$ is not a fixed point subalgebra of $V_E$ under the action of any finite automorphism group of $V_E$, and so is $V_{L}^+\otimes V_{K}^+\subset V_E$.

\subsubsection{$l=2n$ cases}

Recall that $\sigma_L$ is the order 2 automorphism of $V_L$ induced from the $-1$-isometry of $L.$  We denote the $\pm 1$-eigenspaces of $\sigma_L$ on $V_L$ by
$V_L^{\pm}.$
Then $V_{L+\lambda_r}\circ \sigma_L$ \cite{DLM2000} is isomorphic to $V_{L-\lambda_r}=V_{L+\lambda_{2n+1-r}}$ for $r=1,...,n.$ It follows from \cite{D1994,DLM2000} that $V_L$ has a unique irreducible $\sigma_L$-twisted module $V_L^T$ with the lowest weight $\frac{n}{8}$ \cite{DLrt1996}. Then $\sigma_L$ also acts on $V_L^T$ such that the $\pm1$-eigenspaces $(V_L^T)^{\pm}$ are irreducible $V_L^+$-modules. Furthermore, the fixed points $V_L^+$ is a rational, $C_2$-cofinite VOA whose irreducible modules are 
$$\{V_{L}^{\pm},\, V_{L+{\lambda_r}}\cong V_{L+\lambda_{2n+1-r}},\, (V_L^T)^{\pm}\mid r=1,...,n\}. $$
 The twists and dimensions  of $V_L^+$-modules are as follows:
$$
\renewcommand*{\arraystretch}{1.5}
\begin{array}{c|c|c|c|c|c}
& V_L^+ & V_L^-& V_{L+\lambda_r} & (V_L^T)^+ & (V_L^T)^- \\
\specialrule{1pt}{.2ex}{.2ex}
\theta & 1 & 1 & e^{\frac{r(2 n+1-r)2\pi i}{2(2 n+1)}} & e^{\frac{2\pi i n}{8}} & -e^{\frac{2\pi i n}{8}}\\ 
\hline 
\dim &1 & 1 &2 &\sqrt{2n+1} &\sqrt{2n+1}\\
\end{array}
$$
So $\dim \Mod{V_L^+}=8n+4.$

Here are the fusion products of irreducible modules \cite{ADL2005} for $i,j\ne 0$:
$$ V_L^{-} \o  V_{L+\lambda_i}=V_{L+\lambda_i},\quad V_L^{-}\o  ( V_L^T)^{\pm}= ( V_L^T)^{\mp},$$
$$ V_{L+\lambda_i}\o  V_{L+\lambda_j}=V_{L+\lambda_i+\lambda_j}+V_{L+\lambda_i-\lambda_j}\ \ {\rm if}\  i\ne j,$$ 
$$ V_{L+\lambda_i}\o  V_{L+\lambda_i}=V_L^{+}+V_L^{-}+V_{L+2\lambda_i},$$ 
$$V_{L+\lambda_i}\o  ( V_L^T)^{\pm}=( V_L^T)^{+}+( V_L^T)^{-},$$
$$( V_L^T)^{+}\o  ( V_L^T)^{\pm}=V_L^{\pm}+\sum_{r=1}^nV_{L+\lambda_r},$$
$$( V_L^T)^{-}\o  ( V_L^T)^{\pm}=V_L^{\mp}+\sum_{r=1}^nV_{L+\lambda_r}.$$

Let $d$ be the rank of $K$. Again $V_K$ has a unique irreducible $\sigma_K$-twisted module $V_K^T.$ 
By \cite{AD2004}, the irreducible modules of the rational, $C_2$-cofinite VOA $V_K^+$
are 
$$\{V_{K}^{\pm},\, V_{K+{\mu_r}}\cong V_{K+\mu_{2n+1-r}},\, (V_K^T)^{\pm} \mid r=1,...n\}$$ 
with twists and dimensions:
$$
\renewcommand*{\arraystretch}{1.5}
\begin{array}{c|c|c|c|c|c}
& V_K^+& V_K^-& V_{K+\mu_r}& (V_K^T)^+&(V_K^T)^- \\
\specialrule{1pt}{.2ex}{.2ex}
\theta &1 &1 &e^{\frac{-r(2 n+1-r)2\pi i}{2(2 n+1)}} &e^{\frac{2\pi i d}{16}} &-e^{\frac{2\pi i d}{16}}\\ 
\hline 
\dim &1 &1 &2 &\sqrt{2n+1} &\sqrt{2n+1}\\
\end{array}
$$
So $\dim \Mod{V_K^+}$ is also equal to $8n+4.$ The fusion rules of $V_K^+$-modules are the same as the fusion rules of $V_L^+$-modules with $L$ replaced by $K$ and $\lambda_r$ by $\mu_r.$ 
We believe that $\Modrev{V_{L}^+}$ is braided equivalent to $\Mod{V_{K}^+},$ but we cannot prove it in this paper.

First, note that $\dim\CC=(8n+4)^2$ and $A$ has the decomposition in $\CC$ as follows:
$$A=V_L^+\otimes V_K^+\oplus V_L^+\otimes V_K^-\oplus V_L^-\otimes V_K^+ \oplus V_L^-\otimes V_K^-\oplus 2\bigoplus_{r=1}^n V_{L+\lambda_r}\otimes V_{K+\mu_r}\,.$$
It is now clear that $\dim A=\dim \CC_A=8n+4$ and
$[A,A]_{\CC}=4+4n.$  Since $A$ is holomorphic, we see that $|\iCA|=4+4n$  by Theorem \ref{t1}.

Next we determine the simple objects in $\CC_A.$ For $x\in \CC$ we denote the induced $A$-module by $\alpha(x).$ Then for $r=1,...,n$, 
$$\alpha(V_L^+\otimes V_{K+\mu_r})=2\sum_{s=0}^{2n}V_{L+\lambda_s}\otimes V_{K+\mu_r+\mu_s} \quad\text{in } \CC,$$
and so
$$[\alpha(V_L^+\otimes V_{K+\mu_r}), \alpha(V_L^+\otimes V_{K+\mu_s})]_\CA =2 \delta_{r,s}.$$ 
Together with $A$ itself, we have $2n+1$ inequivalent  simple objects in $\CA.$ We denote this set by $H.$ Clearly, each object in $H$ has dimension $1.$ From the fusion rules of $V_L^+\otimes V_K^+$-modules above we see that $H$ is closed under the fusion product. So $H$ is a group of order $2n+1.$

One can easily see that
$$X=\alpha(V_L^+\otimes (V_K^T)^{+})\cong \alpha(V_L^+\otimes (V_K^T)^{-})=V_L\otimes V_K^T+2\sum_{r=1}^nV_{L+\lambda_r}\otimes V_K^T$$
and $Y=\alpha((V_L^T)^{+}\otimes V_K^+)\cong \alpha((V_L^T)^{-}\otimes V_K^+)$ are two new inequivalent simple objects in $\CA$ of dimensions 
$\sqrt{2n+1}.$ Obviously, both $X$ and $Y$ are central elements in $K(\CA)=\overline{K(\CA)}.$ 

One can also check that for any $x\in \irr(\Mod{V_L^+}),$ $y\in \irr(\Mod{V_K^+})$
such that  either $x\ne (V_L^T)^{\pm}$ or $y\ne (V_K^T)^{\pm}$ then $\alpha(x\otimes y)$ is a sum of simple objects from $H\cup \{X,Y\}.$ By Lemma \ref{l:pointed}, the group $G$ of invertible objects of $\iCA$ contains the dihedral group $D_{2(2n+1)}$ as $A^G= (V_L\otimes V_K)^+.$ 
Since $|\irr(\CA)|=4n+4$ and $|H\cup \{X,Y\}|=2n+3$,  we see that the complement of 
$H\cup \{X,Y\}$ in $\irr(\CA)$ consists of $2n+1$ invertible objects 
of order $2$ and $G = D_{4n+2}$.

Since every simple object in $\CA$ is a subobject of a induced module,
the remaining $2n+1$ invertible objects have to be the subobjects of
$$\alpha((V_L^T)^{\pm}\otimes (V_{K}^T)^{\pm})=(2n+1)(V_L^T\otimes V_K^T).$$
So $\alpha((V_L^T)^{\pm}\otimes (V_{K}^T)^{\pm})$ is a direct sum of $2n+1$ inequivalent invertible simple objects in $\CA$ of order $2.$ 

Let $g\in G\setminus H.$ Then
$$g\otimes_A X\cong Y, \ g\otimes_A Y\cong X$$
by using the relation
$$[g\otimes_A\alpha((V_L^T)^{+}\otimes V_K^+), \alpha(V_L^+\otimes (V_K^T)^{+})]_{\CC_A}
=[g, \alpha((V_L^T)^{+}\otimes (V_{K}^T)^{+})]_{\CC_A}=1.$$
Similarly, if $g\in H$ then 
$$g\otimes_A X\cong X,\  g\otimes_A Y\cong Y.$$ 
So $H$ is the normal subgroup of $G$ isomorphic to cyclic group $\langle \tau \rangle$ generated by $\tau.$

Note that $\iCA=G\cup \{X,Y\}.$  Using the fusion rules for $V_L^+$ and $V_K^+$ we see fusion ring structure of $\CC_A$ is  as follows:
$$hX=X,\, hY=Y \ {\rm for}\ h\in H, \quad gX=Y,\, gY=X \ {\rm for} \ g\in G\setminus H,$$
$$X^2=Y^2=\sum_{h\in H}h,\  XY=\sum_{g\in G\setminus H}g.$$
There are only two nontrivial fusion subcategories $\FF_X=H\cup \{X\}$ and $\FF_Y=H\cup\{Y\}$ which are not pointed. One can verify that 
$V_E^{\FF_X}=V_L\otimes V_K^+$ and $V_E^{\FF_Y}=V_L^+\otimes V_K.$ 

Since $D_{2(2n+1)}$ has many subgroups in general, we only give the Galois correspondence for $n=1.$ In this case, $D_{6}=S_3$ which has 4 proper subgroups $H, H_1, H_2, H_3$ of orders $3,2,2,2$ where $H=\langle \tau\rangle, H_1=\langle\sigma \rangle, H_2= \langle \tau\sigma \rangle$ and $H_3=\langle\tau^2\sigma \rangle.$ One can easily find out that $V_E^H=V_{L+K},$
$$V_E^{H_1}=V_E^{+},\, V_E^{H_r}=V_{L+K}^++(V_{L+K+\lambda_1+\mu_1}+V_{L+K+\lambda_2+\mu_2})^{H_r}$$
for $r=2,3$ where 
$$(V_{L+K+\lambda_1+\mu_1}+V_{L+K+\lambda_2+\mu_2})^{H_r}=\{u+e^{\frac{-2\pi i (r-1)}{3}}\sigma(u)\mid u\in V_{L+K+\lambda_1+\mu_1}\}.$$

\subsubsection{$l=2n+1$ cases} Now only  $V_L$ and $V_{L+\lambda_{n+1}}$ are $\sigma_L$-stable. 
From \cite{D1994,DLM2000}, $V_L$ has two inequivalent irreducible $\sigma_L$-twisted modules $V_L^{T_i}$ for $i=1,2$ with the lowest weight $\frac{2n+1}{16}$ \cite{DLrt1996}. The $\sigma_L$ also acts on $V_L^{T_i}$ such that the $\pm1$-eigenspaces $(V_L^{T_i})^{\pm}$ are inequivalent irreducible $V_L^+$-modules. The irreducible $V_L^+$-modules are 
$$\{V_{L}^{\pm},\, V_{L+{\lambda_{n+1}}}^{\pm},\, V_{L+{\lambda_r}}\cong V_{L+\lambda_{2n+2-r}}, \,(V_L^{T_i})^{\pm}\mid r=1,...,n, i=1,2\}.$$
 The twists and dimensions  of $V_L^+$-modules are as follows:
$$
\renewcommand*{\arraystretch}{1.6}
\begin{array}{c|c|c|c|c|c}
&V_L^{\pm}&V_{L+\lambda_{n+1}}^{\pm}&V_{L+\lambda_r}& (V_L^{T_i})^+&(V_L^{T_i})^- \\
\specialrule{1pt}{.2ex}{.2ex}
\theta&1&e^{\frac{(n+1)\pi i}{2}}&e^{\frac{r(2n+2-r)2\pi i}{2(2 n+2)}}&e^{\frac{2\pi i (2n+1)}{16}}&-e^{\frac{2\pi i (2n+1)}{16}}\\ 
\hline 
\dim&1&1&2&\sqrt{n+1}&\sqrt{n+1}\\
\end{array}
$$
So $\dim \Mod{V_L^+}=8n+8.$

For further discussion, we need to determine the dual of $V_L^{T_i}.$ It follows from \cite{FLM1988, ADL2005} that $V_L^{T_i}$ is self-dual when $n$ is odd, whereas
$V_L^{T_1}$ and $V_L^{T_2}$
are dual to each other when $n$ is even. It follows immediately that $(V_L^{T_i})^{\pm}$ are self dual 
if $n$ is odd for $i=1,2$ and the dual of $(V_L^{T_1})^{\pm}$ is $(V_L^{T_2})^{\pm}$ if $n$ is even as 
$V_L^+$-modules.

 We first discuss the fusion product of $(\Mod{V_L^+})_{V_L}$ which is $\BZ_2$-graded such that
    $(\Mod{V_L^+})_{V_L}(1)=(\Mod{V_L^+})_{V_L}^0=\Mod{V_L}$ and $(\Mod{V_L^+})_{V_L}(\sigma_L)$ is the $\sigma_L$-twisted module category. Observe that $\Mod{V_L}$ is pointed, and  $\irr(\Mod{V_L})$ is a cyclic group of order $2(n+1)$  generated by $V_{L+\lambda_1}$. Since  $(\Mod{V_L^+})_{V_L}(\sigma_L)$ is an indecomposable $\Mod{V_L}$-module, we see that
$\irr(\Mod{V_L})$ acts on $\irr((\Mod{V_L^+})_{V_L}(\sigma_L))=\{V_L^{T_i}\mid i=1,2\}$ transitively. So 
    $V_{L+\lambda_r}\otimes V_{L}^{T_i}=V_L^{T_{i+r}}$ where $i+r$ is understood as 1 or 2 modulo $2$ and 
   $$V_L^{T_i}\otimes V_L^{T_i}=\sum_{r=0}^{n}V_{L+\lambda_{2r}}, \quad V_L^{T_1}\otimes V_L^{T_2}=\sum_{r=0}^{n}V_{L+\lambda_{2r+1}} \ {\rm if} \ n \ {\rm \ is \ odd}$$
   $$V_L^{T_i}\otimes V_L^{T_i}=\sum_{r=0}^{n}V_{L+\lambda_{2r+1}}, \quad V_L^{T_1}\otimes V_L^{T_2}=\sum_{r=0}^{n}V_{L+\lambda_{2r}} \ {\rm if} \ n \ {\rm \ is \ even}.$$
We also  have the following fusion rules  of $V_L^+$-modules for $r, r'=1,...,n,$ $i=1,2$ \cite{ADL2005}:
$$V_{L+\lambda_r}\otimes 
V_{L+\lambda_r}=V_{L+\lambda_{r}-\lambda_{r'}}+V_{L+\lambda_{r}+\ld_{r'}},
$$
where  $V_{L+\ld}=V_{L+\ld}^+ + V_{L+\ld}^-$ if $\ld=0$ or $\ld_{n+1}$,
% $$
% V_{L+\lambda_r}\otimes 
% V_{L+\lambda_{r'}}=V_L^++V_L^-+V_{L+\lambda_{n+1}}^++V_{L+\lambda_{n+1}}^-\ {\rm if }\ 2r=n+1,
% $$
% $$V_{L+\lambda_i}\otimes 
% V_{L+\lambda_j}=V_{L+\lambda_{i}-\lambda_j} +V_{L+\lambda_{n+1}}^++V_{L+\lambda_{n+1}}^- \text{ if  } i\ne j \text{ and } i+j=n+1,
% $$
$$V_L^{-}\otimes V_{L+\lambda_r}=V_{L+\lambda_r},\ V^{-}_L\otimes V_{L+\lambda_{n+1}}^{\pm}=V_{L+\lambda_{n+1}}^{\mp}, \ V_L^{-}\otimes (V_L^{T_i})^{\pm}=(V_L^{T_i})^{\mp},$$
$$V_{L+\lambda_{n+1}}^{\pm}\otimes V_{L+\lambda_r}=V_{L+\lambda_{n+1+r}},\
V_{L+\lambda_r}\otimes (V_L^{T_i})^{\pm}=V_L^{T_{i+r}}.$$
The fusion product of $V_{L+\lambda_{n+1}}^\e \o X$ for $X= V_{L+\lambda_{n+1}}^{\e}$ or $(V_{L}^{T_i})^{\e}$ can be described as follows: Here, $\e=\pm$ are respectively identified with $\pm 1$.  For any $\e, \e' \in \{\pm \}$,
 \begin{align*}
     V_{L+\lambda_{n+1}}^{\e'}&\otimes\, V_{L+\lambda_{n+1}}^{\e} =\, V_L^{\e'\e (-1)^{n+1}},\\
     V_{L+\lambda_{n+1}}^{\e'} &\otimes\, (V_{L}^{T_1})^{\e} \, = \, (V_{L}^{T_{2+n}})^{\e'\e}, \\
     V_{L+\lambda_{n+1}}^{\e'} &\otimes\, (V_{L}^{T_2})^{\e} \, = \, (V_{L}^{T_{1+n}})^{-\e'\e}\,.
 \end{align*}
The results above also hold for $V_K$ with $\lambda_r$ replaced by $\mu_r.$ 

Again, let $\CC=\Mod{V_L^+\otimes V_K^+}$ and $A=V_E.$ Then $\dim\CC=(8n+8)^2.$ Using the decomposition of $A$ in $\CC$, 
$$A=V_L\otimes V_K+ V_{L+\lambda_{n+1}}\otimes V_{K+\mu_{n+1}} \oplus  2\bigoplus_{r=1}^n V_{L+\lambda_r}\otimes V_{K+\mu_r},$$
where  $V_{K+\mu}=V_{K+\mu}^+ + V_{K+\mu}^-$ if $\mu=0$ or $\mu_{n+1}$, we see that 
$\dim A=\dim \CC_A=8n+8,$ $[A,A]_{\CC}=8+4n$ and $\iCA=8+4n.$ 
Again, $V_E^{D_{2(2n+2)}}=(V_L\otimes V_K)^+$ and  Lemma 7.2 applies. So $\CA$ contains a pointed fusion subcategory $\BB$ such that $\irr(\BB)= D_{2(2n+2)}$. We still use $\tau$ and $\s$ to denote the generators,  respectively order $2n+2$ and $2$, of $D_{2(2n+2)} \subset \irr(\CA)$. 

Since  $o(D_{2(2n+2)})=2(2n+2),$ there are four more simple objects of $\CC_A$ to be determined. We first determine $\alpha(x)$ for any $x\in \CC.$
 Then 
$$\alpha(V_L^+\otimes V_{K+\mu_r})=2\sum_{s=0}^{2n+1}V_{L+\lambda_s}\otimes V_{K+\mu_r+\mu_s}$$
for  $r=1,...,n.$ One can further show that $\alpha(V_L^+\otimes V_{K+\mu_r})=\tau^r+\tau^{-r}$ by using the explicit constructions of $\tau^{\pm r}$-twisted $V_E$-modules from \cite{DM1994}.
Similarly,
$$\alpha(V_L^+\otimes V_{K+\mu_{n+1}}^{\pm})=\sum_{s=0}^{2n+1}V_{L+\lambda_s}\otimes V_{K+\mu_{n+1}+\mu_s}=\tau^{n+1}$$
is simple  of dimension $1.$
Plus $A$ itself, we have $2n+2$ inequivalent simple objects in $\CA.$   In fact, this set $H$ of invertible objects is a cyclic group generated by $\tau.$
One can easily see that
$$\alpha(V_L^{\pm}\otimes (V_K^{T_i})^{\pm})=\sum_{r=0}^{n}V_{L+{\lambda_{2r}}}\otimes V_K^{T_i}+\sum_{r=0}^nV_{L+\lambda_{2r+1}}\otimes V_K^{T_{i+1}}$$
are two new inequivalent simple objects in $\CA$ of dimensions 
$\sqrt{n+1}$ for $i=1,2.$ Similarly, 
 $\alpha((V_L^{T_i})^{\pm}\otimes V_K^{\pm})$ give another two new inequivalent simple objects in $\CA$ of dimensions $\sqrt{n+1}.$ Last four objects  are  central elements in $K(\CA)=\overline{K(\CA)}.$

Since   $\CA$ contains a  pointed fusion subcategory whose irreducible objects form the dihedral group $D_{2(2n+2)}$, it remains to find another $2n+2$ invertible objects of order $2.$ One can compute that 
$$\alpha((V_L^{T_1})^{\pm}\otimes (V_{K}^{T_1})^{\pm})=(n+1)(V_L^{T_1}\otimes V_K^{T_1})+(n+1)
(V_L^{T_{2}}\otimes V_K^{T_{2}}),$$
$$\alpha((V_L^{T_1})^{\pm}\otimes (V_{K}^{T_2})^{\pm})=(n+1)(V_L^{T_1}\otimes V_K^{T_2})+(n+1)
(V_L^{T_{2}}\otimes V_K^{T_{1}}),$$
$$[\alpha((V_L^{T_i})^{\pm}\otimes (V_{K}^{T_j})^{\pm}), \alpha((V_L^{T_i})^{\pm}\otimes (V_{K}^{T_j})^{\pm})]_{\CC_A}=n+1,$$
$$[\alpha((V_L^{T_1})^{\pm}\otimes (V_{K}^{T_1})^{\pm}), \alpha((V_L^{T_1})^{\pm}\otimes (V_{K}^{T_2})^{\pm})]_{\CC_A}=0.$$
So both $\alpha((V_L^{T_1})^{+}\otimes (V_{K}^{T_1})^{+})$ and $\alpha((V_L^{T_1})^{+}\otimes (V_{K}^{T_2})^{+})$  are direct sum of $n+1$ inequivalent simple objects in $\CA.$  Together we have another 
$2(n+1)$ invertible elements in $D_{2(2n+2)}.$

For short we set 
$$X_i=\alpha(V_L^{+}\otimes (V_K^{T_i})^{+}),\quad Y_i= \alpha((V_L^{T_i})^{+}\otimes V_K^{+})$$
for $i=1,2.$ Then 
$$\iCA=D_{2(2n+2)}\cup\{X_i,Y_i\mid i=1,2\}$$
subject to the relations 
$$X_i^2=Y_i^2=\left\{\begin{array}{ll} \sum_{r=0}^{n}\tau^{2r} &  n\ {\rm odd}\\
\\
\sum_{r=0}^{n}\tau^{2r+1} &  n\ {\rm \ even},
\end{array}\right.
X_1X_2=Y_1Y_2=\left\{\begin{array}{ll}\sum_{r=0}^{n}\tau^{2r+1}  & n\ {\rm  odd}\\ 
\\
\sum_{r=0}^{n}\tau^{2r}   &  n\ {\rm  even,}
\end{array}\right.$$
 $$\tau X_1=X_2, \ \tau X_2=X_1,\ \tau Y_1=Y_2,\ \tau Y_2=Y_1,\ 
 \sigma X_i = Y_i, \ \sigma Y_i = X_i \text{ for } i=1,2,$$
 $$ X_1Y_1=Y_1X_1=X_2Y_2=Y_2X_2=\sum_{r=0}^{n}\tau^{2r}\sigma,$$
 $$X_1Y_2=Y_2X_1=X_2Y_1=Y_1X_2=\sum_{r=0}^{n}\tau^{2r+1}\sigma.$$

 If $n$ is odd, $\CC_A$ has ten nontrivial fusion subcategories which are not groups:
$$\FF_{X_i}=\{\tau^{2r}, X_i\mid r=0,...,n\}, \ \FF_{Y_i}=\{\tau^{2r}, Y_i\mid r=0,...,n\},$$
$$\FF_{X_1,X_2}=H\cup \{X_1,X_2\},\  \FF_{Y_1,Y_2}=H\cup \{Y_1,Y_2\},$$
 $$\FF_{X_1,Y_1}=D^1\cup \{X_1,Y_1\},\  \FF_{X_2,Y_2}=D^1\cup \{X_2,Y_2\},$$
  $$\FF_{X_1,Y_2}=D^2\cup \{X_1,Y_2\},\  \FF_{X_2,Y_1}=D^2\cup \{X_2,Y_1\},$$
where $H= \langle \tau \rangle$ and
 $$D^1= \langle \tau^2, \ \sigma \rangle, \ D^2= \langle\tau^2, \tau\sigma \rangle$$
 are subgroups of $D_{2(2n+2)}$ isomorphic to the dihedral group $D_{2n+2}.$
 One can verify that 
the corresponding subalgebras of $V_E$ are 
$$V_L\otimes V_K^++V_{L+\lambda_{n+1}}\otimes V_{K+\mu_{n+1}}^{\pm}, \ V_L^+\otimes V_K+V_{L+\lambda_{n+1}}^{\pm}\otimes V_{K+\mu_{n+1}},$$
$$V_L\otimes V_K^+,\ V_L^+\otimes V_K,\ V_L^+\otimes V_K^++V_{L+\lambda_{n+1}}^{\e}\otimes V_{K+\mu_{n+1}}^{\e'}, \ \e, \e' \in \{\pm\}.$$

If $n$ is even, $\CC_A$ has only two nontrivial fusion subcategories which are not pointed:
$$\FF_{X_1,X_2}=H\cup \{X_1,X_2\},\  \FF_{Y_1,Y_2}=H\cup \{Y_1,Y_2\}.$$
 One can verify that 
the corresponding subalgebras of $V_E$ are 
$$V_L\otimes V_K^+,\ V_L^+\otimes V_K.$$
We remark that in this case, $(V_{L+\lambda_{n+1}}^{\pm})'=V_{L+\lambda_{n+1}}^{\mp},$
and $(V_{K+\mu_{n+1}}^{\pm})'=V_{K+\mu_{n+1}}^{\mp}$ \cite[Prop. 3.7]{ADL2005}, where
$X'$ is the dual of $X$ in VOA setting. So the remaining eight subalgebras in the case when $n$ is odd are not subalgebras in the current situation.

The fusion categories that we obtained in this section are $\BZ_2$-extensions of the pointed category of $\vec^\w_{D_{2(l+1)}}$ where $D_{2(l+1)}$ is a dihedral group. This is a nonabelian generalization of Tambara-Yamagami categories in comparison with \cite{GR25}.

\subsection{Fusion actions associated to coset constructions} Let $A$ be a holomorphic vertex operator algebra and $U,V$ be rational $C_2$-cofinite subalgebras
such that $U^c=V$ and $V^c=U.$ Then 
$$A=\oplus_{i=0}^pU^i\otimes V^i$$
as $U\otimes V$-modules, where $\{U^i\mid i=0,...,p\}$ are the inequivalent irreducible $U$-modules and
$\{V^i\mid i=0,...,p\}$ are the inequivalent irreducible $V$-modules with $U^0=U$ and $V^0=V.$
Moreover, $\Mod{U}$ and $\Modrev{V}$ are equivalent (cf. Theorem 4.2 of \cite{DNR25}). Let $\CC=\Mod{U\otimes V}=\Mod{U}\boxtimes \Mod{V}.$ Then
$$\iCA=\{\alpha(U^i\otimes V)\mid i=0,...p\}$$
by Lemma 4.1 of \cite{DRX}. So $K(\CC_A)$ is a commutative algebra whose irreducible characters
are given by $\chi_i$ such that $\chi_i(\alpha(U^j\otimes V))=\frac{S^{U}_{i,j}}{\dim U^i}$ for all $j$ where $S^U=(S^U_{i,j})$ is the $S$-matrix of $\Mod{U}.$ It is clear that
$A^{\CC_A}=U\otimes V.$ 

Let $I=\{i\mid U^i \ {\rm 
 is \ invertible}\}.$ Then  $B=\oplus_{i\in I}U^i\otimes V^i$ is a subalgebra of $A$ and $(\CC_B^0)_A$ is the full subcategory of $\CC_A$ generated by $\alpha(U^j\otimes V)$ such that $S^U_{i,j}=\dim U^j$ for all $i\in I.$

%Here is an explicit example. Let $W$ be a holomorphic framed VOA and $U=L(\frac{1}{2},0)$ and $V=U^c$ in $W.$ Then 
%    $U^{c}=V.$ In fact $W=L(\frac{1}{2},0})\otimes U+L(\frac{1}{2},\frac{1}{2})\otimes U^{\frac{1}{2}}+L(\frac{1}{2},\frac{1}{16})\otimes U^{\frac{1}{16}}$

\begin{rmk} {\rm 
    In Section 3.3 of \cite{Xu2014}, a list of all intermediate conformal nets for some examples of conformal inclusions were given by using Theorem 3.8 therein. These intermediate conformal nets can be recovered in the setting of VOAs by a similar approach with the currently developed Galois correspondence (Theorem \ref{t:Gal_Cor}). 
    The same comments also apply to  Theorem 3.14 of \cite{Xu2013}, where a list of all intermediate VOAs for a class of conformal inclusions were presented. } 
\end{rmk}
\section*{Appendix} The fusion rules and twists of the $\BZ_2$-orbifold VOA $V_L^+$ are obtained in the Subsection \ref{ss:Al}, where $L$ is the lattice $A_\ell$, and  the $S$-matrix of  the category $\Mod{V_L^+}$ of modules over $V_L^+$, simply denoted by $\CC_\ell$, can be computed via the formula \cite[(3.1.2)]{BakalovKirillov}
$$
S_{i,j}  = \theta_i^{-1}\theta_j^{-1} \sum_{k}N_{i^*,j}^k d_k \theta_k\,.
$$
The global dimension of $\CC_\ell$ is $4(\ell+1)$. Since modular data of these $\BZ_2$-orbifolds  are of significant interest,  we include them here for future reference.

\subsection*{Case $\ell = 2n$} We fix the order of simple objects of $\CC_\ell$ as follows:
$$
V_L, V_L^-, V_{L+\ld_1}, \dots, V_{L+\ld_n}, (V_L^T)^+,  (V_L^T)^-.
$$
The rank of $\CC_\ell$ is $n+4$, the twists $\theta_X = e^{2\pi i t_X}$ are given by
$$
\renewcommand{\arraystretch}{1.5}
\begin{array}{c|c|c|c|c|c}
\hline
  X   & V_L &V_L^-& V_{L+\ld_r}, r=1, \dots, n & (V_L^T)^+ &  (V_L^T)^- \\
  \hline
   t_X  &0 & 0 & \frac{r(\ell+1-r)}{2(\ell+1)}&\frac{\ell}{16}&\frac{\ell+8}{16}\\ 
   \hline
\end{array}\,,
$$
and the (unnormalized) $S$-matrix is
$$
\renewcommand{\arraystretch}{1.5}
\setlength{\arraycolsep}{4pt} 
\left[\begin{array}{cc|ccc|cc}
    1 & 1 & 2 & \cdots & 2 & \sqrt{\ell+1} & \sqrt{\ell+1} \\
    1 & 1 & 2 & \cdots & 2 & -\sqrt{\ell+1}  & -\sqrt{\ell+1} \\
    \hline
    2 & 2 & && &0  & 0 \\
    \vdots & \vdots & &{\displaystyle 4\cos\left(\frac{ij}{\ell+1}\right)},\,\, i, j=1, \dots n& & \vdots& \vdots \\
    2 & 2 & && &0  & 0 \\
    \hline
    \sqrt{\ell+1} & -\sqrt{\ell+1} &0 & \cdots & 0 &\sqrt{\ell+1} & -\sqrt{\ell+1}\\
    \sqrt{\ell+1} & -\sqrt{\ell+1} & 0&\cdots &0 &-\sqrt{\ell+1} & \sqrt{\ell+1}
\end{array}
\right]\,.
$$
\subsection*{Case $\ell = 2n+1$} The ordered list of simple objects of $\CC_\ell$ is
$$
V_L, V_L^-, V_{L+\ld_{n+1}}^+, V_{L+\ld_{n+1}}^- , V_{L+\ld_1}, \dots, V_{L+\ld_n}, (V_L^{T_1})^+,  (V_L^{T_2})^+, (V_L^{T_1})^-, (V_L^{T_2})^-.
$$
The rank of $\CC_\ell$ is $n+8$  and the twists $\theta_X = e^{2\pi i t_X}$ are given by
$$
\renewcommand{\arraystretch}{1.5}
\setlength{\arraycolsep}{4pt} 
\begin{array}{c|c|c|c|c|c|c|c|c|c}
\hline
  X   & V_L &V_L^-& V_{L+\ld_{n+1}}^+& V_{L+\ld_{n+1}}^- & V_{L+\ld_r} & (V_L^{T_1})^+ & (V_L^{T_2})^+ 
  & (V_L^{T_1})^- &  (V_L^{T_2})^- \\
  \hline
   t_X  &0 & 0 & \frac{\ell+1}{8} & \frac{\ell+1}{8} & \frac{r(\ell+1-r)}{\ell+1}&\frac{\ell}{16}& \frac{\ell}{16}& \frac{\ell+8}{16} & \frac{\ell+8}{16}\\ 
   \hline
\end{array}
$$
where $r=1, \dots, n.$  The $S$-matrix can be expressed as the $3\times 3$ block matrix 
$$
\renewcommand{\arraystretch}{1.5}
\left[\begin{array}{c|c|c}
    B_{1,1} & B_{1,2} & B_{1,3} \\
    \hline
    B_{1,2}^t  & B_{2,2} & 0 \\
    \hline
    B_{1,3}^t & 0 & B_{3,3}
\end{array}
\right],
$$
where $B_{i,j}^t$ denotes the transpose of $B_{i,j}$, and $B_{2,3}$ and $B_{3,2}$ are zero matrices. To describe $B_{i,j}$, we need the functions $\e, \eta, \g$ on $\BZ_8^\times$ which are defined as follows:  $\e(\ell)$ denotes the Jacobi symbol $\jacobi{-1}{\ell}$ of -1 modulo $\ell$,  $\eta(\ell) = 1$ if $\ell \equiv \pm 1 \mod 8$,  and $-1$ otherwise, and $\g(\ell)=\e(\ell)\eta(\ell)\sqrt{-\e(\ell)}$. The remaining $B_{i,j}$'s are then given by
$$
B_{1,1}=\left[\begin{array}{cccc}
1& 1& 1& 1 \\
1& 1& 1& 1 \\
1& 1& -\e(\ell)& -\e(\ell)\\ 
1& 1& -\e(\ell)& -\e(\ell)
\end{array}
\right], \, 
B_{1,2}=\left[\begin{array}{ccc}
2& \cdots & 2 \\
2& \cdots & 2 \\
-2& \cdots & (-1)^{n} 2 \\
-2& \cdots & (-1)^{n} 2 
\end{array}
\right] \text{ is an  $n \times 4$-matrix}, 
$$
$$
B_{1,3}=\sqrt{n+1}\left[\begin{array}{cccc}
1& 1& 1& 1 \\
-1& -1& -1& -1 \\
-\g(\ell)& \g(\ell) & - \g(\ell)&  \g(\ell)\\ 
 \g(\ell) & - \g(\ell) &  \g(\ell) & - \g(\ell)
\end{array}
\right], \, B_{2,2} = \left[4 \cos\left(\frac{ij}{\ell+1}\right)\right]_{i,j}
$$
where $i, j=1, \dots, n$. Finally, 

% \begin{align*}
% B_{3,3} \quad = \quad & \frac{\sqrt{n+1}}{4}\Bigg(\Big((1-\e(\ell))(1+\eta(\ell))+(1+\e(\ell))(1+\e(\ell)\g(\ell))\Big) M\left(\mtx{1&0\\ 0& 1}\right) \\
%  + &\Big((1-\e(\ell))(1-\eta(\ell))+(1+\e(\ell))(1-\e(\ell)\g(\ell))\Big) M\left(\mtx{0 & 1\\ 1& 0}\right)\Bigg)
% \end{align*}
% or 
$$
B_{3,3} = \left\{\begin{array}{ll}
  \sqrt{n+1}\, M\left(\mtx{\zeta^{\eta(\ell)} & \zeta^{-\eta(\ell)} \\  \zeta^{-\eta(\ell)} & \zeta^{\eta(\ell)}} \right)   &  \text{ if } \ell \equiv 1 \text{ or } 5\!\mod 8;\\\\
   \sqrt{\ell+1}\, M\left(\mtx{0 & 1\\ 1& 0}\right) &  \text{ if } \ell \equiv 3\!\mod 8;\\\\
  \sqrt{\ell+1}\, M\left(\mtx{1& 0 \\ 0 & 1}\right) &  \text{ if } \ell \equiv 7\!\mod 8,
\end{array} \right.
$$
where $\zeta=e^{\frac{2\pi i}{8}}$ and  $M(X)$ denotes the $2 \times 2$-block matrix $\mtx{X& -X\\-X& X}$. 
\begin{rmk}{\rm
    The modular data of $\CC_\ell$, for $\ell=2,4$, are Galois conjugates of the second example of Section 5D (a) and (b) in \cite{GNN} respectively, and their fusion rings are shown to be realizable in \cite{LPR}. For $\ell=1, \dots, 10, 12, 14$, and $16$, $\CC_\ell$ realizes some potential modular data of rank $\le 12$  computationally discovered in \cite{NRWW} and \cite{NRW}. These realizations are listed as follows with the convention $r_{c,d}^{a,b}$  introduced in \cite{NRW} of a modular data (MD), where $r$ is the rank, $c$ is the additive central charge modulo 8, $d$ is the global dimension, $a$ is the order of the $T$-matrix and $b$ is the finger print of the MD:}
    $$
\renewcommand{\arraystretch}{1.5}
\begin{array}{c|c|c|c|c|c|c|c}
\hline
  \ell   & 1 & 2 & 3& 4& 5& 6&7\\
  \hline
   {\rm MD} & 8_{1,8.}^{16,123} & 5_{2,12.}^{24,940} & 9_{3,16.}^{16,696} & 6_{4,20.}^{20,180} & 10_{5,24.}^{48,125}& 7_{6,28.}^{56,193}&11_{7,32.}^{16,304}\\
   \hline
\end{array}
$$
$$
\renewcommand{\arraystretch}{1.5}
\begin{array}{c|c|c|c|c|c|c}
\hline
  \ell   &8&9&10&12&14&16\\
  \hline
   {\rm MD} &8_{0,36.}^{18,162}&12_{1,40.}^{80,190}&9_{2,44.}^{88,112}&10_{4,52.}^{52,489}&11_{6,60.}^{120,369}&12_{0,68.}^{34,116}\\
   \hline
\end{array}\,.
$$
\end{rmk}

\bibliographystyle{plain}
\bibliography{references}
\end{document}